\newtheorem{thm}{Theorem}[section]
\newtheorem{cor}[thm]{Corollary}
\newtheorem{lem}[thm]{Lemma}
\newtheorem{pro}[thm]{Proposition}
\theoremstyle{definition}
\theoremstyle{remark}
\newtheorem{rem}[thm]{Remark}
\numberwithin{equation}{section}
\begin{document}
\title[Simulate polynomial convergence numerically]{Numerical
  simulation of polynomial-speed convergence phenomenon}
\author{Yao Li}
\address{Yao Li: Department of Mathematics and Statistics, University
  of Massachusetts Amherst, Amherst, MA, 01002, USA}
\email{yaoli@math.umass.edu}

\author{Hui Xu}
\address{Hui Xu: Department of Mathematics, Amherst College, Amherst,
  MA, 01002, USA}
\email{huxu18@amherst.edu}
\thanks{Hui Xu was supported in part by REU program of University of
  Massachusetts Amherst. }
\keywords{microscopic heat conduction, Markov process, polynomial
  ergodicity, coupling, Monte Carlo simulation}

\begin{abstract}
  We provide a hybrid method that captures the polynomial speed of
  convergence and polynomial speed of mixing for Markov processes. The hybrid method that we introduce is based on the coupling technique and renewal theory. We propose to replace
  some estimates in classical results about the ergodicity of Markov
  processes by numerical simulations when the corresponding analytical proof is difficult. After that, all remaining conclusions can be derived from rigorous analysis. Then we
  apply our results to seek numerical justification for the ergodicity of two 1D microscopic heat conduction models. The
  mixing rate of these two models are expected to be polynomial but
  very difficult to prove. In both examples, our numerical results match the expected
  polynomial mixing rate well.

\end{abstract}
\maketitle

\section{Introduction}
The aim of this paper is two-fold. From the viewpoint of statistical mechanics, this paper
aims to justify the polynomial ergodicity of a class of 1D microscopic
heat conduction models. Purely rigorous analysis of
  polynomial ergodicity of these models using current analytical
  techniques fails to provide satisfactorily accurate results. This paper also
  aims to establish a comprehensive method that can be applied on a
  broader scale. That is, from the viewpoint of numerical analysis, we
  also want to propose a hybrid method that captures polynomial-speed
  convergence to steady-states for general Markov processes.

Heat conduction is ubiquitous in the universe and has been
well-studied at the macroscopic level. However, from a microscopic
point of view, the study of how energy is transported in materials is a very
challenging topic. In particular, the derivation of Fourier's law from microscopic
Hamiltonian dynamics is a century-old challenge to mathematicians and
physicists \cite{bonetto2000fourier, eckmann1999nonequilibrium,
  eckmann2006nonequilibrium, gaspard2008heat, rey2000asymptotic,
  bricmont2007towards}.  Due to the significant difficulty of analyzing Hamiltonian
models, many researchers seek stochastic approximations of Hamiltonian
dynamics in microscopic heat conduction models
\cite{eckmann2006nonequilibrium, li2014nonequilibrium,
  gaspard2008heat, kipnis1982heat, grigo2012mixing, derrida2002large}. In this
paper, we will work primarily on these stochastic heat conduction models.

When an 1D stochastic heat conduction model is connected to two
thermalized boundaries with different temperatures, one would expect
the existence of a naturally selected steady-state, called the
non-equilibrium steady-state (NESS). An analysis of topics
like existence and uniqueness of NESS, and speed of convergence to
NESS will open the door to further studies such as the thermal
conductivity, the existence of local thermodynamic equilibrium, the
Gallavotti-Cohen fluctuation theorem, and
eventually the Fourier's law. However, a rigorous analysis about the
ergodicity of the NESS is usually very challenging.

Due to complicated interactions within the chain, a
stochastic microscopic heat conduction model may have sub-exponential speed of
mixing and sub-exponential speed of convergence to the NESS. In this
paper we will present two 1D microscopic heat conduction models,
namely, the stochastic energy exchange model and the random halves
model, both of which originate from deterministic dynamical systems
\cite{eckmann2006nonequilibrium, bunimovich1992ergodic, li2014nonequilibrium}. One common
feature of these two models is that a low energy particle (or a low
energy site) requires a long time to have the next energy
exchange. Because of this, we expect the rate of convergence and the
rate of mixing to be $\sim t^{-2}$. We refer readers to Section 5 and
Section 6 for more engaged discussion about microscopic heat conduction models
and their connections to deterministic dynamical systems. Besides
these two models, other microscopic heat conduction models that have
sub-exponential mixing rate include the particle model in
\cite{yarmola2013sub, yarmola2014sub}, the rotor model in
\cite{cuneo2016non, cuneo2015non}, and
the anharmonic chain in \cite{hairer2009slow}. Other examples of sub-exponential rate of
convergence have also been observed in various models like MCMC algorithms
and random walks \cite{jarner2002polynomial, tuominen1994subgeometric,
mengersen1996rates}. 

Regardless of  the detailed setting of
models, a rigorous proof of slow mixing phenomena is known to be very
difficult, partially because all methods 
that use the spectral gap of the infinitesimal generator simply fail
to work. Without using spectrum analysis,
one needs to use probabilistic approaches. There is a very rich literature
about probabilistic methods of proving ergodicity of Markov
processes. We refer \cite{meyn2009markov, meyn1993stability,
  meyn1993stability2, hairer2011yet, hairer2010convergence} for results about exponential ergodicity and \cite{jarner2002polynomial, hairer2010convergence,
  douc2004practical, tuominen1994subgeometric} for results about
sub-exponential ergodicity. Almost all of these probabilistic methods require
a reference set in which independent trajectories can couple with a strictly
positive probability (called the minorization condition), and a
Lyapunov function that ``pushes'' trajectories to the reference set
(called the drift condition). The idea is that once entering the
uniform reference set, trajectories of the Markov process
can be coupled and becomes indistinguishable. This gives a quantitative
bound of the convergence speed in (weighted) total variation norm or other
weaker norms \cite{hairer2011asymptotic, hairer2008spectral}. However, there is no generic approach of
constructing such a Lyapunov function. It has to be done
in an ad hoc manner. If a Markov process lives on a high dimensional space,
such a construction is usually very difficult. Even if a rigorous proof is
possible for simpler models such as the stochastic energy exchange
model studied in this paper \cite{li2016polynomial2}, the bound of
convergence speed to the steady-state is usually not accurate,
partially because an explicit expression of the invariant probability
measure is usually not
possible.   

A numerical justification of slow mixing (and slow convergence)
phenomenon is challenging as well. Let $P^{t}$ be the transition kernel of a Markov process $X_{t}$. Let $f$ and $g$
be two observables on the state space of $X_{t}$. The decay of
correlation is denoted by
$$
  C_{\mu}(t) = | \int (P^{t} f)(x) g(x) \mu(\mathrm{d}x) - \int (P^{t}f(x) \mu(\mathrm{d}x)
  \int g(x) \mu(\mathrm{d}x) | \,,
$$
where $\mu$ is a probability measure. A direct simulation of
$C_{\mu}(t)$ requires Monte Carlo simulations of $\int (P^{t} f)(x)
g(x) \mu(\mathrm{d}x) $ and $\int (P^{t}f(x)
\mu(\mathrm{d}x)$. Therefore, it is easy to see that the estimator of
$C_{\mu}(t)$ has variance $O(1)$. Now assume $C_{\mu}(t)$ has a
polynomial tail $C_{\mu}(t) \sim t^{-\alpha}$. A simple calculation
shows that to make the relative error of $C_{\mu}(T)$ less than
$\epsilon$, the sample size should be at least $\epsilon^{-2}T^{2
  \alpha}$, which brings the total computational cost to
$\epsilon^{-2}T^{2 \alpha + 1}$. Our simulation shows that large $T$ is usually
necessary to effectively capture the tail of $C_{\mu}(t)$, which makes the
computational cost of direct Monte Carlo simulation unacceptable. For
instance, when $T = 10^{3}$, $\alpha = 2$, and $\epsilon = 0.1$, the
computational cost of direct simulation is $\sim 10^{17}$. In some
studies, the correlation decay is simply justified by computing the
convergence rate of a few selected observables
\cite{franzke2011noise}, which is unfortunately not a strong evidence
to support the argument about the rate of correlation decay. 

In fact, there are very limited literatures about numerical justifications
of convergence rate (or decay rate of correlation) of Markov
processes. Most known studies choose to numerically verify the drift
condition and the minorization condition \cite{jones2004sufficient,
  cowles1998simulation, roberts1998convergence}. Hence these results still rely on
a known Lyapunov function. As explained above, usually a Lyapunov function can only be
constructed in an ad hoc manner. This means that many difficulties in rigorous
proofs remain unsolved. In addition, numerically showing the drift
condition on the entire state space can be very expensive. There are
also known results about the convergence rate of the MCMC algorithm
\cite{athreya1996convergence}, which assumes that the invariant
probability measure is known. However, in most nonequilibrium systems,
an explicit expression of the invariant probability measure is not
possible.

In this paper, we present a hybrid method that combines the advantage
of both analytical and numerical methods to calculate the polynomial speed
of convergence for Markov processes. As an application of this method,
we numerically show that the two microscopic heat conduction models
have speed of convergence $\sim t^{-2}$ to their steady states. This
is consistent to both our heuristic analysis and numerical results for
corresponding deterministic models. 

Dated back to several decades ago, the early probabilistic approach of
proving convergence rate to the invariant probability measure is based on the coupling
method and discrete renewal theory \cite{lindvall1979coupling, nummelin1982geometric,
  nummelin1983rate}. We run two independent copies of the Markov process until they are coupled and become
indistinguishable. The rule of coupling is that when both processes
enter a certain set $\mathfrak{C}$ called the uniform reference 
set (or small set in some literatures), they have positive probability
to couple (the minorization condition). Then the coupling lemma tells us that the speed of
convergence in the total variation norm is mainly determined by the
tail of the first passage time to $\mathfrak{C}$, denoted by $\tau_{\mathfrak{C}}$. It is usually not difficult to construct such a uniform
reference set $\mathfrak{C}$. But an analytical estimation of
$\tau_{\mathfrak{C}}$ is usually difficult. The drift condition
investigated in numerous later literatures are used to estimate the first passage
time to $\mathfrak{C}$ \cite{hairer2010convergence, meyn2009markov, jarner2002polynomial}. 

The main strategy proposed in this paper is to numerically estimate
$\tau_{\mathfrak{C}}$ directly. This bypasses the difficulty of
constructing and working on Lyapunov functions. This is important
because in some complicated models, the mechanism of slow convergence
is not yet fully understood. From the numerical
analysis point of view, the first passage time of a Markov chain can be computed easily with very
high accuracy. After obtaining a numerical tail of
$\tau_{\mathfrak{C}}$, we use coupling technique and renewal theory to show that the
Markov process has a polynomial speed of convergence.

The organization of this paper is as follows. Section 2 serves as a probability
preliminary. Then we will discuss numerically or analytically verifiable
conditions that lead to a polynomial convergence rate in Section
3. Section 4 discusses conclusions that can be made from numerical and
analytical conditions. Some discussion about continuous-time Markov
process is also made in Section 4. Finally, we investigate two microscopic heat conduction models in
Section 5 and Section 6, respectively.

\section{Probability preliminary: convergence rates of Markov chains}
 
The purpose of this section is to review known sufficient conditions
towards the polynomial ergodicity of a Markov process. Throughout this
section, we let $\Psi_{n}$ be a discrete-time Markov chain on a
measurable space $(X, \mathcal{B})$. The transition kernel of
$\Psi_{n}$ is denoted by $\mathcal{P}(x, \cdot)$. For each $A \in
\mathcal{B}$, $P(\cdot, A)$ is a measurable function. For each $x \in
X$, $P(x, \cdot)$ is a probability measure.

For $A \in \mathcal{B}$, we let $\tau_{A}$ be the first passage time to $A$: 
$$
  \tau_{A} =  \inf \{ n > 0 \,| \, \Psi_{n} \in A \} \,.
$$
A set $A \in \mathcal{B}$ is said to be accessible if
$\mathbb{P}_{x}[\tau_{A} < \infty] = 1$ for every $x \in X$.

We say a Markov process is {\it irreducible} with respect to a measure
$\phi$ on $\mathcal{B}$ if every $A \in \mathcal{B}$ with $\phi(A) >
0$ is accessible. We refer readers to Chapter 4 of
\cite{meyn2009markov} for detailed definition and properties of irreducibility. If $\Psi_{n}$ is irreducible with
respect to a non-trivial measure $\phi$, then there exists a ``maximal
irreducible measure'' $\psi$ such that $\phi \ll \psi$, where $\psi$ is unique up to equivalence classes. We skip the formal introduction of the 
maximal irreducibility as the $\phi$-irreducibility is
sufficient for this paper.

{\bf (A) Construction of an atom.} It has long been known that the stochastic stability of $\Psi_{n}$,
such as recurrence, ergodicity, and decay of correlation, follows from
certain pseudo-atomic properties \cite{meyn2009markov, nummelin1983rate}. More precisely, we need a {\it
  uniform reference set} $\mathfrak{C} \in \mathcal{B}$ that satisfies
$$
  \sup_{x \in \mathfrak{C}} \mathcal{P}(x, \cdot) \geq \eta \theta(\cdot) \,,
$$
where $\theta$ is a probability measure on $(X, \mathcal{B})$ and
$\eta$ is a strictly positive real number. This is called the
``minorization condition'' in many literatures.

Assuming the existence of
such a uniform reference set $\mathfrak{C}$, the state space of
$\Psi_{n}$ can be splitted by letting $\tilde{X} = X \cup \mathfrak{C}_{1}$, where $\mathfrak{C}_{1}$ is an
identical copy of $\mathfrak{C}_{0} := \mathfrak{C}$. Then we can naturally extend
$(X, \mathcal{B})$ to $( \tilde{X}, \mathcal{\tilde{B}})$ and
``split'' a probability measure $\mu$ into a probability measure
$\mu^{*}$ on $(\tilde{X}, \tilde{\mathcal{B}})$: 

$$
  \left \{
\begin{array}{ll}
 & \mu^*|_X = (1 - \eta)\ \mu|_{\mathfrak{C}_0} + \mu|_{X \setminus \mathfrak{C}_0}\\
&  \mu^*|_{\mathfrak{C}_1} = \eta \ \mu|_{\mathfrak{C}_0}\ , \quad \mathfrak{C}_0 \cong \mathfrak{C}_1  \mbox{ via the natural
identification }.
\end{array}
\right .
$$

With the above construction, we can define the split chain
$\tilde{\Psi}$ on $(\tilde{X}, \tilde{\mathcal{B}})$ with a transition
kernel $\tilde{\mathcal{P}}(x, \cdot)$:

$$
  \left \{ 
\begin{array}{cl}
 \mathcal{\tilde P}(x, \cdot) = (\mathcal{P}(x, \cdot))^* &  x \in X \setminus \mathfrak{C}_0\\
 \mathcal{\tilde P}(\mathbf{x}, \cdot) = [(\mathcal{P}(x, \cdot))^{*}
 - \eta {\theta^{*}(\cdot)}]/(1 - \eta) 
 & x \in \mathfrak{C}_0 \\
\mathcal{\tilde P}( x, \cdot) = {\theta^{*}}(\cdot) & x \in \mathfrak{C}_1
\end{array}
\right .
$$

It is straightforward to check that $\tilde{\Psi}_{n}$ possesses an
atom $\mathfrak{C_1}$: for all $x \in \mathfrak{C}_{1}$, $\mathcal{P}(x, \cdot) =
\theta(\cdot)$. In addition, the natural projection $\pi: \tilde{X}
\rightarrow X$ projects $\tilde{\Psi}_{n}$ to $\Psi_{n}$. This split
construction is called the Nummelin splitting. We refer \cite{nummelin1978splitting}
for the detail.

The aperiodicity of $\Psi_{n}$ can follow from the properties of
$\mathfrak{C}$. If $\Psi_{n}$ is irreducible and admits a uniform
reference set such that $\theta( \mathfrak{C}) > 0$, $\Psi_{n}$ is
said to be {\it strongly aperiodic}. If $\Psi_{n}$ is strongly
aperiodic, $\Psi_{n}$ must be aperiodic such that no cyclic
decomposition is possible. We refer readers to Chapter 5 of \cite{meyn2009markov} for the
complete statement of aperiodicity of Markov processes.

The following theorem gives the existence of an invariant probability
measure of $\Psi_{n}$.

\begin{thm}[Theorem 10.0.1 from \cite{meyn2009markov}]
\label{existence}
Let $\Psi_{n}$ be an irreducible aperiodic Markov process on $(X,
\mathcal{B})$. If $\mathfrak{C} \in B$ is an accessible uniform
reference set such that 
$$
  \sup_{x \in \mathfrak{C}} \mathbb{E}_{x}[ \tau_{\mathfrak{C}} ] < \infty
$$
then there exists an invariant probability measure $\pi$. 
\end{thm}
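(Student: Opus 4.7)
The plan is to invoke the Nummelin splitting construction introduced just before the theorem, which reduces the problem to one about a chain that actually possesses an atom, and then build the invariant measure from that atom via the classical cycle (Kac) formula. This separates the hypothesis about $\mathfrak{C}$ (a small set only) from the measure-theoretic construction, which is cleanest for a genuine atom.

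First, I would lift everything to the split chain $\tilde{\Psi}_n$ on $(\tilde{X},\tilde{\mathcal{B}})$ and verify that the hypotheses of the theorem transfer. Irreducibility and aperiodicity of $\tilde{\Psi}_n$ follow from the corresponding properties of $\Psi_n$ via the projection $\pi:\tilde{X}\to X$ that identifies $\mathfrak{C}_1$ with $\mathfrak{C}_0$, using the fact that, starting in $\mathfrak{C}_0$, the chain lands in $\mathfrak{C}_1$ at the next step with probability $\eta$ by construction. Accessibility of $\mathfrak{C}_1$ and the bound
$$\mathbb{E}_{\mathfrak{C}_1}[\tau_{\mathfrak{C}_1}] \le \frac{1}{\eta}\sup_{x\in\mathfrak{C}}\mathbb{E}_x[\tau_{\mathfrak{C}}] < \infty$$
are obtained by decomposing an excursion away from $\mathfrak{C}_1$ as a geometric sum of excursions that leave $\mathfrak{C}_0$ and return to it, stopped at the first Bernoulli$(\eta)$ success which sends the chain into $\mathfrak{C}_1$; each sub-excursion has mean bounded by the hypothesis, and the number of sub-excursions is geometric with mean $1/\eta$.

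Next, because $\mathfrak{C}_1$ is an atom of $\tilde{\Psi}_n$ (that is, $\tilde{\mathcal{P}}(x,\cdot)=\theta^{*}(\cdot)$ for every $x\in\mathfrak{C}_1$), I define the candidate invariant measure by Kac's cycle formula
$$\tilde{\nu}(A) \;=\; \mathbb{E}_{\mathfrak{C}_1}\!\left[\,\sum_{n=0}^{\tau_{\mathfrak{C}_1}-1} \mathbf{1}_A(\tilde{\Psi}_n)\,\right], \qquad A\in\tilde{\mathcal{B}}.$$
Invariance $\tilde{\nu}\tilde{\mathcal{P}}=\tilde{\nu}$ is then a standard telescoping computation: shifting the summation index by one turns $\tilde{\nu}\tilde{\mathcal{P}}(A)$ into $\mathbb{E}_{\mathfrak{C}_1}[\sum_{n=1}^{\tau_{\mathfrak{C}_1}}\mathbf{1}_A(\tilde{\Psi}_n)]$, and since $\tilde{\Psi}_{\tau_{\mathfrak{C}_1}}\in\mathfrak{C}_1$ almost surely, the endpoint term reinserts the $n=0$ contribution, recovering $\tilde{\nu}(A)$. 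The total mass $\tilde{\nu}(\tilde{X})=\mathbb{E}_{\mathfrak{C}_1}[\tau_{\mathfrak{C}_1}]$ is finite by the bound above, so $\tilde{\pi}:=\tilde{\nu}/\tilde{\nu}(\tilde{X})$ is a $\tilde{\mathcal{P}}$-invariant probability measure. Finally, the push-forward $\pi:=\pi_{\ast}\tilde{\pi}$ is $\mathcal{P}$-invariant because the projection intertwines the two kernels, which is precisely the defining property of the Nummelin splitting.

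I expect the main obstacle to be the bookkeeping step that produces the uniform bound on $\mathbb{E}_{\mathfrak{C}_1}[\tau_{\mathfrak{C}_1}]$ from the hypothesis on $\mathfrak{C}$: one has to argue correctly that, under the split kernel, an excursion from $\mathfrak{C}_0$ before the first Bernoulli success is stochastically dominated by an excursion of the original chain from $\mathfrak{C}$, so that the hypothesis $\sup_{x\in\mathfrak{C}}\mathbb{E}_x[\tau_{\mathfrak{C}}]<\infty$ is actually applicable in the split setting. The invariance calculation and the push-forward step are routine once the splitting is in place; the subtlety lives entirely in showing that the atomic cycle really has finite expected length.
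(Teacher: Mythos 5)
The paper itself does not prove this theorem: it is stated as a citation to Theorem 10.0.1 of Meyn and Tweedie, and your argument --- Nummelin splitting, a Kac cycle formula at the resulting atom, then push-forward along the projection --- is precisely the route taken in that reference, so the approach is sound. The bookkeeping you flag as the main obstacle, namely bounding $\mathbb{E}_{\mathfrak{C}_1}[\tau_{\mathfrak{C}_1}]$ from the hypothesis on $\mathfrak{C}$, is exactly the $\beta=1$ case of Lemma~\ref{lem31} quoted a few lines later in the paper (applied, say, to $\mu = \theta$), so within this paper's framework you could invoke that lemma rather than redo the geometric-excursion estimate; the subtlety you raise, that excursions away from $\mathfrak{C}_0$ run under the residual kernel $[\mathcal{P}-\eta\theta^*]/(1-\eta)$ rather than the original $\mathcal{P}$, is precisely what that lemma's proof takes care of. One further point worth making explicit in the invariance computation: the telescoping step replaces $\mathbf{1}_A(\tilde{\Psi}_{\tau_{\mathfrak{C}_1}})$ by $\mathbf{1}_A(\tilde{\Psi}_0)$, which is only automatic once you collapse $\mathfrak{C}_1$ to a single state (legitimate because every point of $\mathfrak{C}_1$ has the same outgoing kernel $\theta^*$); as written, $A\in\tilde{\mathcal{B}}$ could in principle distinguish points inside $\mathfrak{C}_1$, so the identification should be stated before the Kac formula is applied.
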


{\bf (B) Coupling. } The speed of convergence of $\Psi_{n}$ then
follows from the following coupling argument. Without loss of
generality, assume that the uniform reference set $\mathfrak{C}$ is accessible, i.e., 
$$
\mathbb{P}_{x}[ \tau_{\mathfrak{C}} < \infty] := \mathbb{P}_{x}[\inf \{ n > 0
\,|\,\Psi_{n} \in \mathfrak{C} \} ]= 1 
$$
for any $x \in X$. Let $\mu$ and $\nu$ be two initial distributions. One way to bound $\|
\mu \mathcal{P}^{n} - \nu \mathcal{P}^{n} \|_{TV}$ as $n \rightarrow
\infty$ is to run two independent copies of $\tilde{\Psi}_{n}$
starting from $\mu^{*}$ and $\nu^{*}$, respectively, and perform a
coupling at their first simultaneous return to the atom $\mathfrak{C}_{1}$. Let
$T$ be the coupling time. It is well known that 
$$
  \|\mu \mathcal{P}^{n} - \nu \mathcal{P}^{n}\|_{TV} \leq \| \mu^{*}
  \mathcal{\tilde{P}}^{n} - \nu^{*} \mathcal{\tilde{P}}^{n } \|_{TV}
  \leq 2 \mathbb{P}[ T > n] \,.
$$
We refer \cite{lindvall2002lectures, hairer2010convergence} for
details of the coupling method.

Let $Y_{0}, Y_{1}, Y_{2}, \cdots$ and $Y'_{0}, Y'_{1}, Y'_{2}, \cdots$
be the passage times to $\mathfrak{C}_{1}$ for two independent
processes, respectively. It is obvious that $Y_{1}, Y_{2},
\cdots$ and $Y_{1}', Y_{2}', \cdots$ are {\it i.i.d} random variables
with a distribution $\tau_{\mathfrak{C}_{1}} |_{\mathfrak{C}_{1}}$. Therefore, the coupling
time $T$ is the first simultaneous renewal time for renewal processes 
$$
  S_{n} := \sum_{i = 0}^{n} Y_{i}, \quad \mathrm{and} \quad  S'_{n} :=
  \sum_{i = 0}^{n} Y'_{i} \,.
$$
More precisely, we have
$$
  T = \inf_{n \geq 0} \{ S_{i} = S'_{j} = n \mbox{ for some } i, j \} \,.
$$

If in addition, the return times to $\mathfrak{C}_{1}$ are aperiodic,
i.e., the greatest common divisor of $\{ n \geq 1 \,|\, \mathbb{P}[
Y_{i} = n] > 0 \}$ is $1$, then it follows from \cite{lindvall2002lectures} that the finiteness of the moments of $T$ is implied by the
finiteness of corresponding moments for $Y_{0}, Y'_{0}$, and $Y_{1}$. 

\begin{thm}[Theorem 4.2 from \cite{lindvall2002lectures}]
\label{lindvall}
Let $S_{n}$ and $S_{n}'$ be the renewal processes as above. If there
exists $\beta > 1$ such that $\mathbb{E}[Y_{0}^{\beta}]$,
$\mathbb{E}[Y_{0}^{' \beta}]$, and $\mathbb{E}[Y_{1}^{\beta}]$ are all
finite, then there exists a constant $C$ such that
$$
\mathbb{E}[T^{\beta}] \leq C ( \mathbb{E}[Y_{0}^{\beta}] +
\mathbb{E}[Y_{0}^{' \beta}]) < \infty \,.
$$
\end{thm}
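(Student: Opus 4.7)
The plan is to realize $T$ as the first-passage-to-zero time of an integer-valued random walk that is deterministically drifted toward $0$, and then to bound the $\beta$-th moment of that passage time via a polynomial Foster--Lyapunov argument. Construct pointers $(k_n, l_n)$ by always advancing whichever partial sum is behind: set $k_0 = l_0 = 0$, and if $S_{k_n} < S'_{l_n}$ set $k_{n+1} = k_n + 1$, $l_{n+1} = l_n$ (symmetric otherwise). The signed gap $U_n := S_{k_n} - S'_{l_n}$ starts at $U_0 = Y_0 - Y_0'$, is updated by adding a fresh copy of $Y_1$ with sign opposite to $\mathrm{sgn}(U_n)$, and the coupling time satisfies $T = S_{k_\tau}$ with $\tau = \inf\{n \geq 0 : U_n = 0\}$. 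Aperiodicity of $\{Y_1 = n\}$, which is part of the surrounding setup, guarantees $\tau$ is a.s.\ finite and that $U$ actually hits $0$ rather than merely crossing it.

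Because the absolute increment is distributed as $Y_1$ while its sign opposes $U_n$, the walk $U$ has one-step drift $-\mu\,\mathrm{sgn}(U_n)$ with $\mu = \mathbb{E}[Y_1] > 0$. Combining the elementary inequality
$$
  |u - y|^\beta \leq |u|^\beta - \beta |u|^{\beta - 1} y \,\mathrm{sgn}(u) + C_\beta |y|^\beta \quad (u \neq 0,\ y \geq 0)
$$
with conditional expectation yields the polynomial drift estimate
$$
  \mathbb{E}\bigl[|U_{n+1}|^\beta - |U_n|^\beta \,\big|\, U_n = u \bigr] \leq -\beta \mu\, |u|^{\beta - 1} + C_\beta \mathbb{E}[Y_1^\beta],
$$
which is strictly negative outside a fixed compact set $\{|u| \leq K\}$. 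The polynomial Foster--Lyapunov criterion (as in Jarner--Roberts 2002, or Theorem 3.2 of Douc--Fort--Moulines--Soulier) then gives $\mathbb{E}_u[\tau^\beta] \leq C_1 (1 + |u|^\beta)$. Averaging over $U_0 = Y_0 - Y_0'$ and using $(a+b)^\beta \leq 2^{\beta - 1}(a^\beta + b^\beta)$ yields $\mathbb{E}[\tau^\beta] \leq C_2 (1 + \mathbb{E}[Y_0^\beta] + \mathbb{E}[Y_0'^\beta])$.

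For the final step, write $T = S_{k_\tau} \leq Y_0 + Y_0' + \sum_{i = 1}^{\tau} \hat Y_i$, with $\hat Y_i$ i.i.d.\ copies of $Y_1$ (reindexed in order of advancement), and apply a randomly-stopped-sum moment bound of Burkholder--Davis--Gundy type to obtain
$$
  \mathbb{E}[T^\beta] \leq C \bigl( \mathbb{E}[Y_0^\beta] + \mathbb{E}[Y_0'^\beta] + \mathbb{E}[\tau^\beta]\, \mathbb{E}[Y_1^\beta] \bigr);
$$
plugging in the bound on $\mathbb{E}[\tau^\beta]$ absorbs the $Y_1$-dependence into the constant and yields the stated inequality. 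The main obstacle is the moment estimate on $\tau$: naive martingale arguments with $V(u) = |u|$ yield only $\mathbb{E}_u[\tau] \lesssim |u|$, and one does not recover higher moments by iterating one-step drifts. Only the sharper polynomial Foster--Lyapunov criterion with $V(u) = |u|^\beta$ produces the required estimate under the minimal integrability hypothesis $\mathbb{E}[Y_1^\beta] < \infty$, and one has to check carefully that the positive error term $C_\beta \mathbb{E}[Y_1^\beta]$ is genuinely dominated by the negative contribution $\beta \mu |u|^{\beta - 1}$ for $|u|$ large enough, which is where all of the structure of the problem is used.
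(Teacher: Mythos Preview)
The paper does not prove this theorem at all: it is quoted verbatim as Theorem~4.2 of Lindvall's \emph{Lectures on the coupling method} and then used as a black box (see the sentence immediately following the statement, ``Theorem~\ref{lindvall} implies the following immediately''). So there is no in-paper argument to compare against; your proposal is in effect a sketch of Lindvall's own proof.

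On the merits of your sketch: the overall architecture---advance whichever renewal process lags, track the signed gap $U_n$, and reduce the problem to a passage-time estimate for a $\mathbb{Z}$-valued walk with drift toward the origin---is exactly Lindvall's construction and is correct. Two places deserve more care. First, the pointwise inequality $|u-y|^\beta \le |u|^\beta - \beta |u|^{\beta-1} y\,\mathrm{sgn}(u) + C_\beta |y|^\beta$ is delicate for non-integer $\beta>1$; it does hold (and is what Lindvall uses, essentially), but you should either restrict to $1<\beta\le 2$ where it follows from concavity of $t\mapsto t^{\beta-1}$, or invoke the version with an additional $|u|^{\beta-2}y^2$ term for $\beta>2$. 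Second, the last step is not really a Burkholder--Davis--Gundy bound: you want $\mathbb{E}\bigl[(\sum_{i=1}^{\tau}\hat Y_i)^\beta\bigr]\le C\,\mathbb{E}[\tau^\beta]\,\mathbb{E}[Y_1^\beta]$, and the cleanest route is to split $\sum \hat Y_i = \mu\tau + \sum(\hat Y_i-\mu)$, bound the mean part directly, and control the centred martingale part by the Marcinkiewicz--Zygmund inequality together with a stopping-time argument. Once these two points are tightened, the argument goes through and matches the reference.
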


Theorem \ref{lindvall} implies the following immediately:

\begin{thm}
\label{cor33}
Let $\Psi_{n}$ be a Markov chain on $(X, \mathcal{B})$
with transition kernel $\mathcal{P}$. Suppose $\Psi_n$ has an atom $\mathfrak{C}_{1}$ that is accessible and whose
return times are aperiodic. Let $\mu$ and $\nu$ be two probability distributions
on $X$, and assume that for some $\beta > 1$, 
$$
\mathbb E_\mu[\tau^\beta_{\mathfrak{C}_{1}}], \ \ \mathbb E_\nu[\tau^\beta_{\mathfrak{C}_{1}}]
\ \ \mbox{ and } \ \ \mathbb E_{\mathfrak{C}_{1}}[\tau^\beta_{\mathfrak{C}_{1}}] \ < \infty.
$$
Then 
$$
\lim_{n\to \infty}  n^\beta \| \mu \mathcal{P}^n - \nu \mathcal{P}^n\|_{\rm TV} = 0\ .
$$
\end{thm}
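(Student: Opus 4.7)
The plan is to carry out the coupling construction described in part (B) of Section 2, then upgrade a finite $\beta$-th moment of the coupling time into the desired $o(n^{-\beta})$ rate via dominated convergence rather than a crude Markov inequality.

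First I would run two independent copies $\Psi_n$ and $\Psi_n'$ of the chain from $\mu$ and $\nu$, respectively. Because $\mathfrak{C}_1$ is a genuine atom, no Nummelin splitting is needed: the chain already has the required renewal structure. Denote by $Y_0, Y_1, Y_2,\dots$ and $Y_0', Y_1', Y_2',\dots$ the successive inter-visit times to $\mathfrak{C}_1$ for the two chains. By hypothesis $\mathbb{E}[Y_0^\beta]=\mathbb{E}_\mu[\tau^\beta_{\mathfrak{C}_1}]<\infty$ and $\mathbb{E}[Y_0'^\beta]=\mathbb{E}_\nu[\tau^\beta_{\mathfrak{C}_1}]<\infty$, while by the strong Markov property the increments $Y_i$, $Y_i'$ ($i\geq 1$) are i.i.d.\ with common law $\tau_{\mathfrak{C}_1}$ started from the atom, so they also have finite $\beta$-th moment. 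Aperiodicity of the return times ensures the first simultaneous renewal time
$$T = \inf\{n\geq 0 : S_i = S_j' = n \text{ for some } i,j\}$$
is almost surely finite.

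Next I would invoke Theorem \ref{lindvall} to conclude $\mathbb{E}[T^\beta]<\infty$. At the moment both chains land simultaneously in $\mathfrak{C}_1$, the atomic property means each has the same conditional law $\theta$ for its next step, so I can couple them using shared randomness from that point onward and let them evolve identically. The standard coupling inequality then yields
$$\|\mu\mathcal{P}^n - \nu\mathcal{P}^n\|_{\mathrm{TV}} \leq 2\,\mathbb{P}[T > n].$$

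The only real subtlety is that a plain Markov inequality only gives $n^\beta\mathbb{P}[T>n]\leq \mathbb{E}[T^\beta]$, an $O(n^{-\beta})$ rate, whereas the statement demands $o(n^{-\beta})$. To close this gap I would write
$$n^\beta\,\mathbb{P}[T>n] = \mathbb{E}\!\left[n^\beta \mathbf{1}_{\{T>n\}}\right] \leq \mathbb{E}\!\left[T^\beta \mathbf{1}_{\{T>n\}}\right].$$
Since $T^\beta$ is integrable and $\mathbf{1}_{\{T>n\}}\downarrow 0$ almost surely (as $T<\infty$ a.s.), the right-hand side tends to $0$ by dominated convergence. Multiplying the coupling bound by $n^\beta$ and passing to the limit completes the proof. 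The single step that deserves care is this dominated-convergence upgrade; everything else is an assembly of Theorem \ref{lindvall} and the coupling inequality, with the atomic structure of $\mathfrak{C}_1$ guaranteeing that coupling from $T$ onward is exact.
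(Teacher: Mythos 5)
Your proof is correct and follows essentially the same route the paper has in mind: set up the two renewal processes of return times to the atom, invoke Theorem~\ref{lindvall} for $\mathbb{E}[T^\beta]<\infty$, apply the coupling inequality, and then convert the finite moment into the $o(n^{-\beta})$ tail. The one step you spell out with dominated convergence --- that $\mathbb{E}[T^\beta]<\infty$ forces $n^\beta\,\mathbb{P}[T>n]\to 0$ --- is precisely the content of the paper's Lemma~\ref{m2t}, which the authors cite as a standard textbook fact; making it explicit is a clean and valid way to finish.
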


 The speed of convergence follows immediately by applying Theorem
 \ref{cor33} to the split chain $\tilde{\Psi}_n$.

\medskip

{\bf (C) Convergence rate for general Markov chain.}

It remains to pass the result of $\tilde{\Psi}_{n}$ to
$\Psi_{n}$. Note that if Theorem \ref{cor33} holds for $\tilde{\Psi}_{n}$, we have
$$
  \lim_{n\to \infty}  n^\beta \| \mu \mathcal{P}^n - \nu
  \mathcal{P}^n\|_{\rm TV} \leq \lim_{n\to \infty}  n^\beta \| \mu^{*}
  \mathcal{\tilde{P}}^n - \nu^{*} \mathcal{\tilde{P}}^n\|_{\rm TV} = 0 \,. 
$$
Therefore, result for $\Psi_{n}$ follows from the following lemma that
passes bounds of $\tau^{\beta}_{\mathfrak{C}}$ to bounds of
$\tau_{\mathfrak{C}_{1}}^{\beta}$. 

\begin{lem}[Lemma 3.1 of \cite{nummelin1983rate}]
\label{lem31}
Let $\Psi_{n}$ be an aperiodic Markov chain on $(X, \mathcal{B})$. If
$\mathfrak{C} \in \mathcal{B}$ is an accessible uniform reference set
and 
$$
  \sup_{x \in \mathfrak{C}} \mathbb{E}_{x}[
  \tau_{\mathfrak{C}}^{\beta}] < \infty
$$
for some $\beta > 0$, then for any probability measure $\mu$ such
that $\mathbb{E}_{\mu}[ \tau_{\mathfrak{C}}^{\beta}] < \infty$, we
have $\mathbb{E}_{\mu^{*}}[\tau_{\mathfrak{C}_{1}}^{\beta}] < C
\mathbb{E}_{\mu}[\tau_{\mathfrak{C}}^{\beta}] < \infty$ for some
constant $C$.  
\end{lem}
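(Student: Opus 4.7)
The plan is to express the first hitting time $\tau_{\mathfrak{C}_1}$ of the split chain as a geometrically stopped sum of return times to $\mathfrak{C}$ in the original chain, and then bound each return time using the minorization. I first rewrite the minorization as a Markov kernel decomposition: for $x \in \mathfrak{C}$,
$$
\mathcal{P}(x,\cdot) \;=\; \eta\,\theta(\cdot) + (1-\eta)\,Q(x,\cdot),
$$
where $Q(x,\cdot) := [\mathcal{P}(x,\cdot) - \eta\theta(\cdot)]/(1-\eta)$ is a Markov kernel on $X$. This is the decomposition underlying the Nummelin splitting: run $\Psi_n$ under $\mathcal{P}$, and at each visit of $\Psi_n$ to $\mathfrak{C}$ flip an independent Bernoulli$(\eta)$ coin $V$; if $V=1$ label that state as belonging to $\mathfrak{C}_1$ and draw the next step from $\theta$, else label it as $\mathfrak{C}_0$ and draw the next step from $Q(\Psi_n,\cdot)$.

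Denote by $\sigma_1 < \sigma_2 < \cdots$ the successive visit times of $\Psi_n$ to $\mathfrak{C}$ and by $V_1, V_2, \ldots$ the associated i.i.d.\ Bernoulli$(\eta)$ labels (each $V_k$ is only consulted at time $\sigma_k$, so $(V_k)_{k\ge 1}$ is independent of $\sigma_1$). Set $N := \inf\{k \ge 1 : V_k = 1\}$, which is geometric with parameter $\eta$, independent of $\sigma_1$, and satisfies $\tau_{\mathfrak{C}_1} = \sigma_N = \sigma_1 + \sum_{k=2}^{N}\Delta_k$ with $\Delta_k := \sigma_k - \sigma_{k-1}$ (for $\mu$ supported off $\mathfrak{C}$; the case in which $\mu^{*}$ charges $\mathfrak{C}_1$ is handled by the same argument applied after one $\theta$-step). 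A first-step decomposition at $x \in \mathfrak{C}$ combined with the kernel decomposition above gives
$$
M \;\ge\; \mathbb{E}_{x}[\tau_{\mathfrak{C}}^{\beta}] \;=\; \eta\,\mathbb{E}_{\theta}[(1+\tau_{\mathfrak{C}}^{*})^{\beta}] + (1-\eta)\,\mathbb{E}_{Q(x,\cdot)}[(1+\tau_{\mathfrak{C}}^{*})^{\beta}],
$$
where $\tau_{\mathfrak{C}}^{*}$ is the hitting time of $\mathfrak{C}$ allowed to equal $0$. Hence $\mathbb{E}_{Q(x,\cdot)}[(1+\tau_{\mathfrak{C}}^{*})^{\beta}] \le M/(1-\eta) =: M'$ uniformly in $x \in \mathfrak{C}$. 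By the Markov property at $\sigma_{k-1}$, the conditional law of $\Delta_k$ on the event $\{N \ge k\} = \{V_1 = \cdots = V_{k-1} = 0\}$ is that of $1 + \tau_{\mathfrak{C}}^{*}$ started from $Q(\Psi_{\sigma_{k-1}},\cdot)$, so $\mathbb{E}[\Delta_k^{\beta}\mid \mathcal{F}_{\sigma_{k-1}}, N\ge k] \le M'$.

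To conclude I condition on $N = n$ and apply the power-mean inequality $(a_1+\cdots+a_n)^{\beta} \le n^{\beta-1}\sum a_i^{\beta}$ when $\beta \ge 1$ (or subadditivity $(\sum a_i)^\beta \le \sum a_i^\beta$ when $\beta \in (0,1)$). Since the further conditioning $\{V_k, \ldots, V_{n-1}=0,\,V_n=1\}$ only affects transitions after $\sigma_k$, the Markov property preserves $\mathbb{E}[\Delta_k^{\beta}\mid N=n] \le M'$, and the independence $N \perp \sigma_1$ gives $\mathbb{E}[\sigma_1^{\beta}\mid N=n] = \mathbb{E}_{\mu}[\tau_{\mathfrak{C}}^{\beta}]$. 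Summing against the geometric law of $N$,
$$
\mathbb{E}_{\mu^{*}}[\tau_{\mathfrak{C}_{1}}^{\beta}] \;\le\; \mathbb{E}[N^{(\beta-1)\vee 0}]\,\mathbb{E}_{\mu}[\tau_{\mathfrak{C}}^{\beta}] + M'\,\mathbb{E}[N^{\beta\vee 1}].
$$
Both moments of $N$ are finite constants depending only on $\eta$ and $\beta$, and $\tau_{\mathfrak{C}} \ge 1$ forces $\mathbb{E}_{\mu}[\tau_{\mathfrak{C}}^{\beta}] \ge 1$, so the additive term is absorbed into the multiplicative constant $C$. The main obstacle is the careful bookkeeping of the dependence between the coin sequence $(V_k)$ and the trajectory: each coin is independent of the past at the instant it is generated, yet biases the subsequent transition through the choice between $\theta$ and $Q$, so the Markov property must be applied at the correct stopping times $\sigma_{k-1}$ and the coin conditioning must be stripped down to the single event $\{V_{k-1}=0\}$ rather than the full event $\{N=n\}$.
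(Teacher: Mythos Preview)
The paper does not prove this lemma; it is quoted verbatim as Lemma~3.1 of Nummelin--Tuominen (1983) and used as a black box. There is therefore no ``paper's own proof'' to compare against.

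Your argument is correct and is essentially the standard one: represent $\tau_{\mathfrak{C}_1}$ as $\sigma_N$ where $N$ is the geometric index of the first successful Bernoulli$(\eta)$ coin among the visits to $\mathfrak{C}$, control each inter-visit time $\Delta_k$ on $\{V_{k-1}=0\}$ via the residual kernel $Q$ and the hypothesis $\sup_{x\in\mathfrak{C}}\mathbb{E}_x[\tau_{\mathfrak{C}}^\beta]\le M$, and then sum with the power-mean (or subadditivity) inequality against the geometric law of $N$. The delicate point you identify---that the coin $V_{k-1}$ biases $\Delta_k$ through the choice between $\theta$ and $Q$ at $\sigma_{k-1}$, while the coins $V_k,V_{k+1},\dots$ are independent of $\mathcal{F}_{\sigma_k}$ and hence of $\Delta_k$---is handled correctly: conditioning on $\{N=n\}$ with $n\ge k$ reduces, for the purpose of bounding $\mathbb{E}[\Delta_k^\beta]$, to conditioning on $\{V_1=\cdots=V_{k-1}=0\}$ alone. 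The boundary cases where $\mu^*$ charges $\mathfrak{C}_0$ or $\mathfrak{C}_1$ are indeed minor variants (one applies the same bound after a single $Q$- or $\theta$-step, using $\mathbb{E}_\theta[(1+\tau_{\mathfrak{C}}^*)^\beta]\le M/\eta$ from the same first-step decomposition).
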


In summary, the rigorous result for polynomial rate of convergence is
as follows.

\begin{thm}[Theorem 2.7 of \cite{nummelin1983rate}]
\label{rigorous}
Let $\Psi_{n}$ be an aperiodic Markov chain on $(X, \mathcal{B})$ with
transition kernel $\mathcal{P}$. Assume $\Psi_{n}$ admits an accessible uniform
reference set $\mathfrak{C}$ such that
$$
  \sup_{x \in \mathfrak{C}} \mathbb{E}_{x}[
  \tau_{\mathfrak{C}}^{\beta}] < \infty
$$
for some $\beta > 0$, then for any probability measures $\mu$, $\nu$
on $X$ that satisfy 
$$
  \mathbb{E}_{\mu}[ \tau_{\mathfrak{C}}^{\beta}] < \infty, \quad
  \mbox{ and } \quad  \mathbb{E}_{\nu} [ \tau_{\mathfrak{C}}^{\beta}] <
  \infty \,,
$$
we have
$$
  \lim_{n\rightarrow \infty} n^{\beta} \| \mu \mathcal{P}^{n} - \nu
  \mathcal{P}^{n} \|_{TV} = 0 \,.
$$
\end{thm}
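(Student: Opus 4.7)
The claim is essentially a composition of the three ingredients assembled in parts (A)--(B) of this section: Nummelin splitting manufactures an atom $\mathfrak{C}_1$, Theorem \ref{cor33} gives the polynomial rate of convergence for any chain that already possesses an accessible aperiodic atom, and Lemma \ref{lem31} transfers moment bounds on $\tau_{\mathfrak{C}}$ to moment bounds on $\tau_{\mathfrak{C}_1}$. The plan is therefore to pull back the hypotheses of the theorem to the split chain, invoke Theorem \ref{cor33} there, and project the resulting bound back down to $\Psi_n$.

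First I would construct the split chain $\tilde{\Psi}_n$ on $(\tilde{X}, \tilde{\mathcal{B}})$ exactly as in paragraph (A), so that $\mathfrak{C}_1$ is a genuine atom with $\tilde{\mathcal{P}}(x, \cdot) = \theta^*(\cdot)$ for every $x \in \mathfrak{C}_1$. Because the natural projection $\pi: \tilde{X} \to X$ pushes $\tilde{\mathcal{P}}$ onto $\mathcal{P}$ and the lifted measures $\mu^*, \nu^*$ onto $\mu, \nu$, contraction of total variation under measurable maps gives
$$
\|\mu \mathcal{P}^n - \nu \mathcal{P}^n\|_{TV} \;\leq\; \|\mu^* \tilde{\mathcal{P}}^n - \nu^* \tilde{\mathcal{P}}^n\|_{TV},
$$
so it suffices to show the right side is $o(n^{-\beta})$. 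To apply Theorem \ref{cor33} to $\tilde{\Psi}_n$ I need three moment bounds. The accessibility and aperiodicity of $\mathfrak{C}_1$ come from the accessibility and (strong) aperiodicity of $\mathfrak{C}$ under $\Psi_n$: each visit to $\mathfrak{C}$ dumps mass $\eta$ into $\mathfrak{C}_1$, and once in $\mathfrak{C}_1$ the chain redistributes according to $\theta^*$, which charges $\mathfrak{C}_1$ because $\theta(\mathfrak{C}) > 0$, placing $1$ in the support of the return-time law. The moment bounds $\mathbb{E}_{\mu^*}[\tau_{\mathfrak{C}_1}^\beta]$, $\mathbb{E}_{\nu^*}[\tau_{\mathfrak{C}_1}^\beta]$, and $\mathbb{E}_{\mathfrak{C}_1}[\tau_{\mathfrak{C}_1}^\beta]$ then follow by applying Lemma \ref{lem31} with the three starting laws $\mu$, $\nu$, and (for the third) the measure realizing the supremum over $\mathfrak{C}$ in the hypothesis. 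Theorem \ref{cor33} then delivers $n^\beta \|\mu^* \tilde{\mathcal{P}}^n - \nu^* \tilde{\mathcal{P}}^n\|_{TV} \to 0$, which combines with the projection inequality above to finish the proof.

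I expect the main technical obstacle to be the verification that Lemma \ref{lem31} really applies to bound $\mathbb{E}_{\mathfrak{C}_1}[\tau_{\mathfrak{C}_1}^\beta]$: one has to compare a return-time starting from the atomic copy $\mathfrak{C}_1$ with a return-time on the unsplit side, which requires tracking how the splitting redistributes mass on the first step. The other minor issue is that Theorem \ref{cor33}, via Theorem \ref{lindvall}, is stated for $\beta > 1$, whereas the statement here allows any $\beta > 0$; the $\beta \in (0,1]$ case requires either a direct Markov-inequality argument on the renewal process or a direct appeal to the original Nummelin--Tuominen renewal bound. Once those bookkeeping points are settled, the composition argument above carries the proof through without additional analytic input.
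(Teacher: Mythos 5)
Your proposal follows the same route the paper lays out across parts (A)--(C): build the split chain so that $\mathfrak{C}_1$ is a genuine atom, bound $\|\mu \mathcal{P}^n - \nu \mathcal{P}^n\|_{TV}$ by $\|\mu^* \tilde{\mathcal{P}}^n - \nu^* \tilde{\mathcal{P}}^n\|_{TV}$ via the projection, pull the $\tau_{\mathfrak{C}}$-moment hypotheses up to $\tau_{\mathfrak{C}_1}$-moments with Lemma \ref{lem31}, and then invoke Theorem \ref{cor33} for the split chain. Your caveat about the parameter range is a genuine wrinkle the paper glosses over: Theorem \ref{cor33} (through Theorem \ref{lindvall}) is only stated for $\beta > 1$, whereas the theorem as written allows any $\beta > 0$, so the $\beta \in (0,1]$ case cannot be deduced from the paper's own internal lemmas and must be sourced directly from the Nummelin--Tuominen reference. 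In all of the paper's applications $\beta$ is near $2$, so this does not affect the downstream results, but you are right that as stated the composition argument only establishes the $\beta > 1$ range.
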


Using more precise bounds in Theorem \ref{lindvall} and Lemma
\ref{lem31} that involve the initial distributions, we can make the
following estimate that will be used to show the rate of correlation decay.  

\begin{cor}
\label{rigorous2}
Let $\Psi_{n}$ and $\mathfrak{C}$ be as in Theorem \ref{rigorous}. Then for any probability measures $\mu$, $\nu$
on $X$ that satisfy 
$$
  \mathbb{E}_{\mu}[ \tau_{\mathfrak{C}}^{\beta}] < \infty, \quad
  \mbox{ and } \quad  \mathbb{E}_{\nu} [ \tau_{\mathfrak{C}}^{\beta}] <
  \infty \,,
$$
there exists a constant $C$ such that
$$
  \sup_{n} n^{\beta} \| \mu \mathcal{P}^{n} - \nu
  \mathcal{P}^{n} \|_{TV} \leq C (   \mathbb{E}_{\mu}[
  \tau_{\mathfrak{C}}^{\beta}]  +  \mathbb{E}_{\nu}[ \tau_{\mathfrak{C}}^{\beta}] )\,.
$$
\end{cor}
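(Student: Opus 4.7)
The plan is to trace through the proof of Theorem \ref{rigorous} while keeping explicit constants, combining the coupling bound from part (B) with quantitative versions of Theorem \ref{lindvall} and Lemma \ref{lem31}, and then applying Markov's inequality to convert a moment bound on the coupling time into a uniform polynomial tail bound. First, by the coupling inequality recorded in part (B) together with Markov's inequality,
$$
  n^{\beta}\|\mu\mathcal{P}^{n} - \nu\mathcal{P}^{n}\|_{TV} \;\leq\; 2\, n^{\beta}\mathbb{P}[T > n] \;\leq\; 2\,\mathbb{E}[T^{\beta}],
$$
where $T$ is the first simultaneous return to the atom $\mathfrak{C}_{1}$ for two independent copies of the split chain $\tilde{\Psi}_{n}$ started from $\mu^{*}$ and $\nu^{*}$. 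Since the right-hand side does not depend on $n$, it is enough to dominate $\mathbb{E}[T^{\beta}]$ by a constant multiple of $\mathbb{E}_{\mu}[\tau_{\mathfrak{C}}^{\beta}] + \mathbb{E}_{\nu}[\tau_{\mathfrak{C}}^{\beta}]$.

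Next, Theorem \ref{lindvall} applied to the renewal processes generated by consecutive hits of $\mathfrak{C}_{1}$ yields
$$
  \mathbb{E}[T^{\beta}] \;\leq\; C_{1}\bigl(\mathbb{E}_{\mu^{*}}[\tau_{\mathfrak{C}_{1}}^{\beta}] + \mathbb{E}_{\nu^{*}}[\tau_{\mathfrak{C}_{1}}^{\beta}]\bigr),
$$
where the stationary-return moment $\mathbb{E}_{\mathfrak{C}_{1}}[\tau_{\mathfrak{C}_{1}}^{\beta}]$ is finite (and thus absorbable into the constant $C_{1}$) thanks to the hypothesis $\sup_{x\in\mathfrak{C}}\mathbb{E}_{x}[\tau_{\mathfrak{C}}^{\beta}] < \infty$ combined with a first application of Lemma \ref{lem31}. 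A second application of Lemma \ref{lem31} to the actual initial distributions $\mu$ and $\nu$ supplies a constant $C_{2}$ such that
$$
  \mathbb{E}_{\mu^{*}}[\tau_{\mathfrak{C}_{1}}^{\beta}] \leq C_{2}\,\mathbb{E}_{\mu}[\tau_{\mathfrak{C}}^{\beta}], \qquad \mathbb{E}_{\nu^{*}}[\tau_{\mathfrak{C}_{1}}^{\beta}] \leq C_{2}\,\mathbb{E}_{\nu}[\tau_{\mathfrak{C}}^{\beta}].
$$
Chaining the three inequalities above and taking the supremum over $n$ gives the claimed bound with, for instance, $C = 2 C_{1} C_{2}$.

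The main obstacle is verifying that the constants extracted from \cite{lindvall2002lectures} and \cite{nummelin1983rate} really are independent of the initial distributions. Theorem \ref{lindvall} is already stated with a constant that does not depend on how the two renewal processes are started, which is exactly what the argument requires; and Lemma \ref{lem31} furnishes a linear dependence on $\mathbb{E}_{\mu}[\tau_{\mathfrak{C}}^{\beta}]$ with a constant depending only on $\mathfrak{C}$, the minorization constant $\eta$, and $\beta$. The remaining subtlety is to make sure Lemma \ref{lem31} is invoked twice — once to control the generic in-atom return moment $\mathbb{E}_{\mathfrak{C}_{1}}[\tau_{\mathfrak{C}_{1}}^{\beta}]$ used inside Theorem \ref{lindvall}, and once for each of the initial distributions $\mu$ and $\nu$ — and to observe that the supremum over $n$ in the conclusion comes for free from the Markov-inequality step, since the intermediate bound $n^{\beta}\mathbb{P}[T > n] \leq \mathbb{E}[T^{\beta}]$ holds simultaneously for every $n \geq 1$, not merely in the limit.
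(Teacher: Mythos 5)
Your proposal is correct and follows essentially the same route as the paper: a Markov-inequality bound $n^{\beta}\mathbb{P}[T>n]\leq\mathbb{E}[T^{\beta}]$ applied to the coupling inequality, followed by Theorem \ref{lindvall} and Lemma \ref{lem31} to dominate $\mathbb{E}[T^{\beta}]$ by a constant times $\mathbb{E}_{\mu}[\tau_{\mathfrak{C}}^{\beta}]+\mathbb{E}_{\nu}[\tau_{\mathfrak{C}}^{\beta}]$. The paper's proof leaves the last chaining step as "follows easily from more precise bounds in Theorem \ref{lindvall} and Lemma \ref{lem31}," whereas you have spelled out where each of the two lemma applications (one for the atom-return moment inside the renewal bound, one for each initial distribution) enters — which is exactly the intended elaboration.
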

\begin{proof}
Let $T$ be the coupling time of $\tilde{\Psi}_{n}$. 
For any $n \geq 0$,
$$
  \mathbb{E}_{\mu^{*}, \nu^{*}}[ T^{\beta}] \geq \sum_{k =
    n}^{\infty}k^{\beta} \mathbb{P}_{\mu^{*}, \nu^{*}}[ T = k] \geq
  n^\mathbb{\beta}\mathbb{P}_{\mu^{*}, \nu^{*} }[ T > n] \,.
$$

Hence have
$$
  \sup_{n} n^{\beta}\| \mu \mathcal{P}^{n} - \nu
  \mathcal{P}^{n} \|_{TV} \leq 2 \sup_{n} n^{\beta}
  \mathbb{P}_{\mu^{*}, \nu^{*}}[ T > n] \leq \mathbb{E}_{\mu^{*},
    \nu^{*}}[ T^{\beta}] \,.
$$

The corollary follows easily from more precise bounds in Theorem \ref{lindvall} and Lemma \ref{lem31}.

\end{proof}

\section{Verificable conditions for slow convergence}
The aim of this section is to convert conditions in Theorem
\ref{rigorous} to sufficient conditions that are verificable either numerically or analytically. 

Firstly, we will list sufficient conditions that will result in polynomial speed of convergence of $ \| \mu
\mathcal{P}^{n} - \nu \mathcal{P}^{n} \|_{TV}$ from Theorem \ref{rigorous}.
\begin{itemize}
\item[(1)] $\Psi_{n}$ is irreducible with respect to a non-trivial
  probability measure $\phi$.
\item[(2)] $\Psi_{n}$ is aperiodic.
\item[(3)] $\Psi_{n}$ admits a uniform reference set $\mathfrak{C}$ such
  that
$$
  \mathcal{P}(x, \cdot) \geq \eta \theta(\cdot) \quad \mbox{ for all }
  x \in \mathfrak{C}.
$$
  \item[(4)] There exists a constant $\beta > 0$ such that $\mathbb{E}_{\mu}[ \tau_{\mathfrak{C}}^{\beta}]< \infty$ and
    $\mathbb{E}_{\nu}[ \tau_{\mathfrak{C}}^{\beta}] < \infty$.
\item[(5)]  $
  \sup_{x\in \mathfrak{C}} \mathbb{E}_{x}[ \tau_{\mathfrak{C}}^{\beta}] < \infty
$ for the constant $\beta$ in (4). 

\end{itemize}

If in addition, we would like to show the existence of an invariant
probability measure $\pi$ and the polynomial speed of convergence
towards $\pi$, the following two more conditions are needed.

\begin{itemize}
  \item[(6)] $ \sup_{x\in \mathfrak{C}} \mathbb{E}_{x}[
    \tau_{\mathfrak{C}}] < \infty$.
\item[(7)]  $\mathbb{E}_{\pi}[ \tau_{\mathfrak{C}}^{\beta}] < \infty$
  for the constant $\beta$ in (4).
\end{itemize}

{\bf (A) Conditions that are verifiable analytically.} For most models, conditions (1)-(3) are relatively easy to check
analytically. Condition (1), i.e. the irreducibility, usually can be proved by constructing
an event with positive probability such that a positive-measured set is
``reachable''. In addition, it is well known that if the Markov
process has a continuous component, then the reachability of one point
implies the irreducibility. 

For many Markov processes, condition (3) is also easy to
check. Essentially all we need to show is that the probability measure
$\mathcal{P}(x, \cdot)$ for $x \in \mathfrak{C}$ has some uniform lower bound,
which is usually easy to prove by constructing events with positive
probability. It remains to show condition (2), i.e., the aperiodicity. In fact, if
$\mathfrak{C}$ is a uniform reference set with $\nu( \mathfrak{C}) >
0$, then $\Psi_{n}$ is a strongly aperiodic chain, which is obviously
aperiodic. 

We will give two examples about verifying these analytical conditions
in Section 5 and 6, both of which are Markov jump processes. We choose
to address the numerical verification of the ergodicity of stochastic differential
equations in a separate paper. Conditions (1)-(3) for stochastic
differential equations is usually linked to the H\"ormander's
condition \cite{malliavin1978stochastic, hormander1967hypoelliptic,
  hairer2011malliavin}. However, H\"ormander's condition alone does
not automatically imply condition (3) for a time-$h$ sample chain of
the stochastic differential equation. Some nontrivial work needs to be
done to verify this condition \cite{herzog2015practical}.

{\bf (B) Conditions that are easier to check numerically.} It is not difficult to show that a set is a uniform reference
set. However, a rigorous estimation of return times to a uniform
reference set is usually non-trivial. Most proofs rely on the
careful construction of a Lyapunov function $V$. It is well known that the
first passage time to the ``bottom'' of the Lyapunov function can be
estimated by calculating the ``drift''
$$
  \mathcal{P}V(x)  - V(x) = \int_{X} \mathcal{P}(x, \mathrm{d}y)V(y)
  = V(x)\,.
$$
Unfortunately, there is no universal approach to construct a Lyapunov
function for a Markov process. It may also be nontrivial to prove that a
given function is actually a Lyapunov function. We refer \cite{li2016polynomial}
for the examples of estimating first passage time by the Lyapunov
function method. 

On the other hand, the numerical computation of first passage times is
usually efficient and accurate. Therefore we choose to check
conditions (4)-(7) above numerically when a rigorous proof is out of
reach. We do not compute moments directly, because the moments of return times, i.e., $\tau^{\beta}$, usualy do not have finite variances. As a result, a large number of samples is usually necessary to
stabilize the estimate of $\mathbb{E}[ \tau^{\beta}]$. When the expectation of $\tau^{\beta}$ is close to blow-up,
numerically verifying whether it is finite becomes even more
difficult. Therefore instead, we observe that the finiteness of moments of a
random variable is closely related to its tail. 

Let $Z$ be a random variable that takes non-negative integer
values. Assume $\beta > 1$. The following two lemmas are straightforward.

\begin{lem} 
\label{m2t}
If $\mathbb{E}[Z^\beta]<\infty$, then 
$$
\lim_{n\to \infty} n^\beta \mathbb{P}[Z>n] = 0 \,.
$$
\end{lem}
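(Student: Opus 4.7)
The plan is to exploit the tail-sum representation of $\mathbb{E}[Z^\beta]$ together with a sharpened form of Markov's inequality. Since $Z$ takes non-negative integer values, I would begin by writing
\[
\mathbb{E}[Z^\beta] \;=\; \sum_{k=0}^{\infty} k^\beta \, \mathbb{P}[Z = k],
\]
and noting that the hypothesis $\mathbb{E}[Z^\beta] < \infty$ means this series converges.

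Next I would compare $n^\beta \mathbb{P}[Z > n]$ with the tail of this series. The key observation is that for $k > n$ we have $n^\beta \le k^\beta$, hence
\[
n^\beta \, \mathbb{P}[Z > n] \;=\; n^\beta \sum_{k > n} \mathbb{P}[Z = k] \;\le\; \sum_{k > n} k^\beta \, \mathbb{P}[Z = k].
\]
The right-hand side is the tail of a convergent series, so it tends to $0$ as $n \to \infty$. This immediately gives $\lim_{n \to \infty} n^\beta \mathbb{P}[Z > n] = 0$, which is exactly the claim. (This is essentially the same argument that shows $\mathbb{E}[Z] < \infty$ implies $n\,\mathbb{P}[Z > n] \to 0$, just applied to the random variable $Z^\beta$.)

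There is no serious obstacle here; the proof is a two-line manipulation. The only subtlety worth flagging is that one must be careful not to confuse this one-sided conclusion with its converse, which fails in general: tail decay $\mathbb{P}[Z > n] \lesssim n^{-\beta}$ alone does \emph{not} imply $\mathbb{E}[Z^\beta] < \infty$, and that is precisely why the authors state the next lemma separately to handle the direction that the numerical procedure actually needs.
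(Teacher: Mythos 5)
Your argument is correct and is precisely the standard textbook proof (tail-sum comparison plus Markov's inequality applied to $Z^\beta$) that the paper itself omits, remarking only that it is a standard result. Your closing caveat that the converse fails and motivates Lemma~\ref{t2m} is also accurate.
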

\begin{proof} 
We skip the proof as this is a standard textbook result.
\end{proof}

\begin{lem}
\label{t2m}
If
$$
\mathbb{P}[Z > n] \le C n^{-\beta} \quad n > 0\,,
$$
then for any $\epsilon > 0$, we have
$$ 
\mathbb{E}[Z^{\beta - \epsilon}]< C_{1}C \epsilon^{-1} \,. 
$$
for some constant $C_{1}$ that depends on $\beta$. 
\end{lem}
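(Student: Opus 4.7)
The plan is to use the layer-cake representation of moments, which converts the tail bound on $Z$ directly into an integral estimate for $\mathbb{E}[Z^{\beta-\epsilon}]$. Specifically, for any non-negative random variable and any $p>0$, one has
$$
\mathbb{E}[Z^p] = \int_0^\infty p\, t^{p-1}\, \mathbb{P}[Z>t]\, dt,
$$
so taking $p = \beta - \epsilon$ reduces the lemma to bounding this integral using the hypothesis $\mathbb{P}[Z>n] \le C n^{-\beta}$.

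The natural move is to split the integral at $t=1$. On $[0,1]$ I would use the trivial bound $\mathbb{P}[Z>t] \le 1$, which yields a contribution at most $1$. On $[1,\infty)$ I would convert the integer-valued tail bound into a continuous one: since $Z$ takes values in $\mathbb{Z}_{\ge 0}$, for $t \in [n,n+1)$ with $n\ge 1$ we have $\mathbb{P}[Z>t] = \mathbb{P}[Z>n] \le C n^{-\beta} \le 2^\beta C\, t^{-\beta}$. Substituting gives
$$
(\beta-\epsilon)\, 2^\beta C \int_1^\infty t^{-\epsilon - 1}\, dt \;=\; \frac{(\beta-\epsilon)\, 2^\beta C}{\epsilon},
$$
which is precisely the source of the advertised $\epsilon^{-1}$ factor.

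Combining the two pieces and absorbing the $O(1)$ contribution from $[0,1]$ into the $1/\epsilon$ term (valid for the range of small $\epsilon$ that matters, or equivalently by replacing $C$ with $\max(C,1)$), I arrive at an inequality $\mathbb{E}[Z^{\beta-\epsilon}] \le C_1 C / \epsilon$ with $C_1$ depending only on $\beta$. There is no real obstacle in this argument — it is routine tail-to-moment bookkeeping — but the one point requiring a moment of care is the passage from the discrete tail hypothesis to the continuous tail needed for the integral formula, and making sure that the constant $C_1$ ends up depending only on $\beta$ (and not on $C$ or $\epsilon$), so that the dependence on $C$ and $\epsilon$ displayed in the statement is correct.
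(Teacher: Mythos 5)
Your proof is correct and follows essentially the same tail-to-moment argument as the paper: the paper uses the discrete summation-by-parts identity $\mathbb{E}[Z^{\beta-\epsilon}] \lesssim \sum_{n\ge 0} (n+1)^{\beta-1-\epsilon}\,\mathbb{P}[Z>n]$, while you use its continuous analogue, the layer-cake formula. Both reach the same $\epsilon^{-1}$ bound, and both share the same minor bookkeeping point you already flagged (the $O(1)$ contribution from small values must be absorbed, e.g.\ by taking $C\ge 1$ without loss of generality), so your route matches the paper's.
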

\begin{proof} 
For any $\epsilon > 0$, we have
\begin{eqnarray*}
\mathbb{E}[Z^{\beta - \epsilon}] &
=  &\sum_{n = 0}^{\infty} n^{\beta - \epsilon}\mathbb{P}[Z = n] \\
\le C(\beta) \sum_{n = 0}^{\infty} \sum_{m = 0}^{n} m^{\beta - 1 -\epsilon} 
\mathbb{P}[Z = n]\\
& \le &  C(\beta) \sum_{n = 0}^{\infty} (n + 1)^{\beta - 1 -\epsilon} \mathbb{P}[Z > n] \\
& \le & C(\beta) \sum_{n = 0}^{\infty}
(n + 1)^{\beta - 1 -\epsilon} \min\{1,  Cn^{-\beta} \}\\
&<& C_{1}(\beta) C \epsilon^{-1}  \,,
\end{eqnarray*}
where $C(\beta)$ is a constant depending on $\beta$. 
\end{proof}

Therefore, condition (4) can be checked by computing the tails of
$\tau_{\mathfrak{C}}$. The polynomial tail of $\tau_{\mathfrak{C}}$
can be computed by measuring the slope of $\mathbb{P}[
\tau_{\mathfrak{C}} > n] $ vs. $n$ in the log-log plot. If
$\mathbb{P}[ \tau_{\mathfrak{C}} > n] \sim n^{-\beta}$ for some $\beta
> 1$, then by Lemma \ref{t2m}, we have
$\mathbb{E}[\tau_{\mathfrak{C}}^{ \beta - \epsilon}] < \infty$ for any
small $\epsilon > 0$, which implies condition (4) for parameter $\beta - \epsilon$. Condition (7)
can be verified in the same way by taking the numerical steady-state
distribution as the initial condition. 

It remains to check conditions (5) and (6). When $\beta > 1$, it is
easy to see that  (6) is implied by (5) immediately. Hence we focus on
the numerical verification of condition (5). As discussed before,
a numerical verification of the finiteness of
$\mathbb{E}_{x}[\tau^{\beta}_{\mathfrak{C}}]$ is difficult when
$\tau^{\beta}_{\mathfrak{C}}$ is close to blow-up. Instead, the
asymptotic property of the tail of $\tau_{\mathfrak{C}}$ is much more computable. Define 
$$
  \gamma (x) =  \sup_{n \geq 1}\frac{\mathbb{P}_{x}[ \tau_{\mathfrak{C}}
    > n]}{n^{-\beta}} \,.
$$
By Lemma \ref{t2m}, it is sufficient to numerically check the boundedness of
$\gamma (x)$ on $\mathfrak{C}$. Different from the moments, the tail
distribution $\mathbb{P}_{x}[ \tau_{\mathfrak{C}} > n]$ can be
computed efficiently with Monte Carlo simulation. In practice, we adopt
the following algorithm to check condition (5). This algorithm
provides reliable result in two examples in Section 5 and 6.
\begin{itemize}
  \item[(a)] Choose one point $x_{0}$ in $\mathfrak{C}$ and plot
    $\mathbb{P}_{x_{0}}[ \tau_{\mathfrak{C}} > n]$ vs. $n$ in the
    log-log plot. If the $\mathbb{P}_{x_{0}}[ \tau_{\mathfrak{C}} >
    n]$ vs. $n$ plot shows a straight line for large $n$, $\beta$ is
    determined by measuring the slope of this straight line. (If the
    plot does not form a straight line in the log-log plot, $\mathbb{P}_{x_{0}}[ \tau_{\mathfrak{C}} > n]$ does not have a
    polynomial tail.)
\item[(b)] For each $x \in \mathfrak{C}$, use the $\beta$ above to compute 
$$
  \gamma_{N}(x) := \sup_{1 \leq n \leq N}\frac{\mathbb{P}_{x}[ \tau_{\mathfrak{C}}
    > n]}{n^{-\beta}} 
$$
for increasing $N$ until $\gamma_{N}(x)$ is stabilized. Approximate
$\gamma (x)$ by $\gamma_{N}(x)$. 
\item[(c)] Numerically check that $\gamma (x)$ is uniformly bounded on
  $\mathfrak{C}$. This can be done by searching a grid of lattice
  points in $\mathfrak{C}$, finding monotonicity of $\gamma(x)$, or
  using some gradient-free numerical optimization algorithms \cite{anderson2001direct,
    walton2011modified, rios2013derivative, conn2009introduction}.
  (The simulation of derivatives of $\gamma (x)$ is usually
  not reliable.) 
\item[(d)] If during the optimization, $\gamma_{N}(x)$ can not be
  stabilized at some $x_{*} \in \mathfrak{C}$, repeat step $(a)$ for
  $x_{*}$ to update $\beta$. 
\end{itemize}

In the case of that the transition kernel of $\Psi_{n}$ is explicitly known, an
alternative approach of checking condition (5) is to compare the
transition kernel starting from each point in $\mathfrak{C}$.

\begin{pro}
\label{dominate}
Assume there exist $x_{*} \in \mathfrak{C}$ and constant $\delta >0$ such
that 
$$
  \mathcal{P}( x_{*}, \cdot) \geq \delta \mathcal{P}(x, \cdot)
$$
for all $x \in \mathfrak{C}$. Then
$\mathbb{E}_{x_{*}}[\tau_{\mathfrak{C}}^{\beta}] < \infty$ implies
$\sup_{x} \mathbb{E}_{x}[ \tau_{\mathfrak{C}}^{\beta}] < \infty$.
\end{pro}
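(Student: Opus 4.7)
The plan is to establish a uniform tail comparison $\mathbb{P}_x[\tau_{\mathfrak{C}} > n] \leq \delta^{-1} \mathbb{P}_{x_*}[\tau_{\mathfrak{C}} > n]$ for every $x \in \mathfrak{C}$ and every $n \geq 1$, and then convert this to a moment bound by the standard layer-cake identity. The key observation driving the argument is that we never need the domination $\mathcal{P}(x_*,\cdot) \geq \delta\, \mathcal{P}(x,\cdot)$ anywhere except on the complement of $\mathfrak{C}$, because the event $\{\tau_{\mathfrak{C}} > n\}$ with $n \geq 1$ forces the very first step to leave $\mathfrak{C}$.

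First, I would apply the Markov property at time $1$. For $x \in \mathfrak{C}$ and $n \geq 1$,
$$
\mathbb{P}_x[\tau_{\mathfrak{C}} > n] = \int_{X \setminus \mathfrak{C}} \mathcal{P}(x, dy)\, \mathbb{P}_y[\tau_{\mathfrak{C}} > n-1],
$$
and the identical decomposition holds with $x_*$ in place of $x$. The hypothesis rewrites as $\mathcal{P}(x, A) \leq \delta^{-1} \mathcal{P}(x_*, A)$ for every measurable $A$, and substituting this into the integral against the non-negative integrand $y \mapsto \mathbb{P}_y[\tau_{\mathfrak{C}} > n-1]$ immediately yields the desired tail comparison.

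Second, I would pass from tails to $\beta$-th moments using $\mathbb{E}[Z^\beta] = \beta \int_0^\infty t^{\beta-1}\, \mathbb{P}[Z > t]\, dt$ for non-negative $Z$. Splitting the integral at $t = 1$, the piece on $[0,1]$ contributes at most $1$, and on $[1, \infty)$ the tail bound above (using that $\tau_{\mathfrak{C}}$ is integer-valued, so $\mathbb{P}_x[\tau_{\mathfrak{C}} > t] = \mathbb{P}_x[\tau_{\mathfrak{C}} > \lfloor t \rfloor]$) gives
$$
\mathbb{E}_x[\tau_{\mathfrak{C}}^\beta] \leq 1 + \delta^{-1}\, \mathbb{E}_{x_*}[\tau_{\mathfrak{C}}^\beta],
$$
uniformly in $x \in \mathfrak{C}$. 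Since the right-hand side is finite by hypothesis, this is the desired bound on $\sup_{x \in \mathfrak{C}} \mathbb{E}_x[\tau_{\mathfrak{C}}^\beta]$.

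There is no serious obstacle: the argument is essentially one step of conditioning followed by a direct application of the measure-domination. The only point meriting care is the restriction of the one-step integration to $X \setminus \mathfrak{C}$, together with the reminder that the domination is an inequality of measures and therefore transfers under integration only for non-negative integrands, which is what we have here.
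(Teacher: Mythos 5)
Your proof is correct and follows essentially the same route as the paper's: both rest on a one-step conditioning that restricts the integration to $X \setminus \mathfrak{C}$ and then applies the domination $\mathcal{P}(x,\cdot) \le \delta^{-1}\mathcal{P}(x_*,\cdot)$. The only cosmetic difference is that the paper applies this decomposition directly at the level of the $\beta$-th moment (writing $\mathbb{E}_x[\tau_{\mathfrak{C}}^\beta] = \mathcal{P}(x,\mathfrak{C}) + \int_{X\setminus\mathfrak{C}} \mathbb{E}_y[(1+\tau_{\mathfrak{C}})^\beta]\,\mathcal{P}(x,dy)$ and getting the cleaner bound $\mathbb{E}_x[\tau_{\mathfrak{C}}^\beta] \le \delta^{-1}\mathbb{E}_{x_*}[\tau_{\mathfrak{C}}^\beta]$), whereas you establish the tail comparison first and then integrate via the layer-cake identity, which costs you an extra additive constant but is otherwise equivalent.
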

\begin{proof}
For any $x \in \mathfrak{C}$, we have
\begin{eqnarray*}
\mathbb{E}_{x}[ \tau_{\mathfrak{C}}^{\beta}] & = & \mathcal{P}(x, \mathfrak{C}) + \int_{X
  \setminus \mathfrak{C}} \mathbb{E}_{y}[ (1 +
\tau_{\mathfrak{C}})^{\beta}] \mathcal{P}(x, \mathrm{d} y) \\
&\leq& \frac{1}{\delta} \mathcal{P}(x_{*}, \mathfrak{C}) + \frac{1}{\delta}\int_{X \setminus \mathfrak{C}}
\mathbb{E}_{y}[ (1 +\tau_{\mathfrak{C}})^{\beta}] \mathcal{P}(x_{*},
\mathrm{d} y) \\
&= & \frac{1}{\delta} \mathbb{E}_{x_{*}}[ \tau_{\mathfrak{C}}^{\beta}] \,.
\end{eqnarray*}
Hence $\mathbb{E}_{x}[ \tau_{\mathfrak{C}}^{\beta}]$ is uniformly
bounded if we have $\mathbb{E}_{x_{*}}[\tau_{\mathfrak{C}}^{\beta}] <
\infty$. 
\end{proof}

\bigskip

In summary, we are interested in the verification of the following four
conditions:

\begin{itemize}
  \item[{\bf (A1)}]  $\Psi_{n}$ is irreducible with respect to a non-trivial
  probability measure $\phi$.
\item[{\bf (A2)}] $\Psi_{n}$ admits a uniform reference set $\mathfrak{C}$ such
  that
$$
  \mathcal{P}(x, \cdot) \geq \eta \theta(\cdot) \quad \mbox{ for all }
  x \in \mathfrak{C} 
$$
and
$$
  \theta( \mathfrak{C}) > 0 \,.
$$
\item[{\bf (N1)}] Distributions
  $\mathbb{P}_{\mu}[ \tau_{\mathfrak{C}} \geq n]$ and
  $\mathbb{P}_{\pi}[ \tau_{\mathfrak{C}} \geq n]$ have polynomial
  tails $\sim n^{-\beta}$ for some $\beta  > 1$, where
  $\mu$ is the initial distribution that we are interested in, and
  $\pi$ is the invariant measure. If $\pi$ can not be given explicitly, we generate a numerical invariant
probability measure $\hat{\pi}$ instead.
\item[{\bf (N2)}] Function 
$$
  \gamma (x) = \sup_{ n\geq 1} \frac{\mathbb{P}_{x}[
    \tau_{\mathfrak{C}} > n]}{n^{-\beta}}
$$
is uniformly bounded on $\mathfrak{C}$. 
\end{itemize}

Condition {\bf (N2)} can be replaced by either of the following two
conditions. 
\begin{itemize}
\item[{\bf (N2)'}] There exist $x_{*} \in \mathfrak{C}$ and constant
  $\delta > 0$ such that
$$
  \mathcal{P}(x_{*}, \cdot) \geq \delta \mathcal{P}(x, \cdot) \,.
$$
In addition, the distribution $\mathbb{P}_{x^{*}}[\tau_{\mathfrak{C}}
> n]$ has a polynomial tail $\sim n^{-\beta}$ for some $\beta > 0$. 
  \item[{\bf (N2)''}] Function
$$
  h(x) := \mathbb{E}_{x}[ \tau_{\mathfrak{C}}^{\beta}]
$$
is continuous with respect to $x \in \mathfrak{C}$. $\mathfrak{C}$ is
a compact set.
\end{itemize}

\begin{rem}
Condition {\bf (N2)''} is usually applicable to stochastic
differential equations. If $\Phi_{t}$ is a stochastic differential
equation, it is well known that the integer moment of
first passage times of $\Phi_{t}$ can be obtained by solving a series
of Fokker-Planck-type equations successively \cite{risken1989fokker}. The continuous
dependency of moments with respect to the initial condition is not
hard to prove in this case. We will address the issue of speed of
convergence to steady states for stochastic differential equations in
a separate paper.

\end{rem}

\begin{rem}
When simulating $\mathbb{P}[\tau_{\mathfrak{C}} > n] $ for large $n$,
one needs to make sure that the numerical result is reliable. We apply the
Agresti-Coull interval \cite{agresti1998approximate} to determine the confidence
interval of $\mathbb{P}[\tau_{\mathfrak{C}} > n] $. Assume that in $N$ samples we
observed $m$ return times that are longer than $n$. Then define 
$$
  \tilde{N}  = N + z^{2}, \quad \tilde{p} = \frac{1}{\tilde{N}}(m +
  \frac{1}{2}z^{2}) \,,
$$
where $z = 1.96$ (the 0.975 quantile of a standard normal
distribution). The confidence interval for
$\mathbb{P}[\tau_{\mathfrak{C}} > n] $ is given by 
$$
  \tilde{p} \pm z \sqrt{\frac{1}{\tilde{N}} \tilde{p} (1 - \tilde{p})} \,.
$$
In order to make the result reliable, the confidence interval has to
be significantly smaller than $\tilde{p}$ itself. In other words $m$
cannot be too small. When $N$ is large, it is easy to see that the
ratio of $\sqrt{\frac{1}{\tilde{N}} \tilde{p} (1 - \tilde{p})} $ to
$\tilde{p}$ is roughly $m^{-1/2}$. In our simulation, the criterion is
that $\tilde{p}$ is reliable when $m$ is greater than $100$. 

\end{rem}

\begin{rem}
The rigorous result does not guarantee that the speed of convergence
equals the speed of contraction of the Markov operator. It is possible
that there exists a $\beta_{1}$ such that {\bf (N2)} holds for
$\beta_{1}$ and there exist initial distributions $\mu$, $\nu$ such that 
$$
  \mathbb{E}_{\mu}[\tau_{\mathfrak{C}}^{\beta_{1}}] < \infty, \quad
  \mbox{ and } \quad \mathbb{E}_{\nu}[\tau_{\mathfrak{C}}^{\beta_{1}}]
  < \infty \,,
$$
but $\mathbb{E}_{\pi}[ \tau_{\mathfrak{C}}^{\beta_{1}}] = \infty$. In
fact, in this situation the renewal theory only implies
$\mathbb{E}_{\pi}[ \tau_{\mathfrak{C}}^{\beta_{1} - 1}] < \infty$. In
other words, the speed of contraction of the Markov operator may be
faster than the speed of convergence to the invariant probability
measure. When the invariant probability
measure can not be explicitly given, we need to generate an invariant probability measure
numerically. Theoretically, the return time from the numerical invariant probability measure
could be different.

\end{rem}

\section{Main conclusion from conditions}

This section discusses the main conclusions one can obtain from the
sufficient conditions summarized in the previous section. What one can
learn from a direct numerical simulation is simple, as it only shows the speed of convergence 
with respect to one initial distribution. Different from that, our
method can support more general results as it incorporates both
numerical simulations and analytical proof. 

Applying Lemma \ref{t2m} and Theorem \ref{rigorous}, we can obtain the following direct consequences from {\bf (A1), (A2), (N1)} and
{\bf (N2)}.
\begin{itemize}
  \item There exists an invariant probability measure $\pi$.
\item For the initial distribution $\mu$ we have tested,
$$
 \lim_{n \rightarrow \infty} n^{\beta - \epsilon} \| \mu \mathcal{P}^{n} - \pi
 \|_{TV} = 0 \,.
$$
(or
$$
  \lim_{n \rightarrow \infty} n^{\beta - \epsilon} \| \mu \mathcal{P}^{n} - \hat{\pi}P^{n}
 \|_{TV} = 0 
$$
if $\hat{\pi}$ is numerically obtained) for any $\epsilon > 0$. 
\end{itemize}
In this section, we will try to go beyond that.

\subsection{Initial distributions.}
First we will show that the sufficient condition in the previous
subsection implies the polynomial convergence rate for a wider
class of initial conditions. This follows immediately from the following proposition.

\begin{pro}
\label{pro:class}
Assume $\Psi_{n}$ is irreducible with respect to $\phi$. If 
$$
  \sup_{x \in \mathfrak{C} } \mathbb{E}_{x}[ \tau_{\mathfrak{C}}^{\beta}] < \infty \,,
$$
then $\mathbb{E}_{x}[ \tau_{\mathfrak{C}}^{\beta}]  < \infty$ for
$\phi$-almost every $x \in X$.
\end{pro}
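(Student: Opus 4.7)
The plan is to introduce the function $V(x) := \mathbb{E}_x[\tau_{\mathfrak{C}}^{\beta}]$ and the level set
$$
D := \{x \in X : V(x) < \infty\},
$$
observe that $\mathfrak{C} \subseteq D$ by hypothesis, and then show that $D$ is forward invariant under the transition kernel. Once $D$ is absorbing and contains an irreducibility-accessible piece, the irreducibility assumption forces $D^c$ to be $\phi$-null. Measurability of $V$ follows from $V(x) = \sum_{k \geq 1} k^{\beta}\mathbb{P}_x[\tau_{\mathfrak{C}} = k]$ together with the measurability of $x \mapsto \mathbb{P}_x[\tau_{\mathfrak{C}} = k]$ in the transition kernel, so $D \in \mathcal{B}$.

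The key step is to prove the absorbing property: for every $x \in D$, $\mathcal{P}(x, D) = 1$. To do this I would apply the strong Markov property at time $1$, splitting on whether $\Psi_1 \in \mathfrak{C}$. This gives the identity
$$
V(x) = \mathcal{P}(x, \mathfrak{C}) + \int_{X \setminus \mathfrak{C}} \mathbb{E}_y\bigl[(1 + \tau_{\mathfrak{C}})^{\beta}\bigr] \mathcal{P}(x, \mathrm{d}y),
$$
valid for every $x \in X$. Since $(1 + \tau_{\mathfrak{C}})^{\beta} \geq \tau_{\mathfrak{C}}^{\beta}$, one obtains the pointwise lower bound
$$
V(x) \geq \int_{X \setminus \mathfrak{C}} V(y)\, \mathcal{P}(x, \mathrm{d}y).
$$
If $x \in D$, the left side is finite, so $V < \infty$ $\mathcal{P}(x, \cdot)$-a.s. on $X \setminus \mathfrak{C}$; combined with $\mathfrak{C} \subseteq D$, this yields $\mathcal{P}(x, D) = 1$, as desired.

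The rest is bookkeeping. By induction on $n$ and the Markov property, $\mathbb{P}_x[\Psi_n \in D] = 1$ for every $n \geq 0$ whenever $x \in D$, and hence
$$
\mathbb{P}_x\bigl[\Psi_n \in D \text{ for every } n \geq 0\bigr] = 1,
$$
which implies $\mathbb{P}_x[\tau_{D^c} < \infty] = 0$ for all $x \in D$. Now suppose for contradiction that $\phi(D^c) > 0$. Then by irreducibility with respect to $\phi$, the set $D^c$ is accessible, so $\mathbb{P}_{x_0}[\tau_{D^c} < \infty] = 1$ for any $x_0 \in \mathfrak{C} \subseteq D$, contradicting the previous display. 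Therefore $\phi(D^c) = 0$, which is exactly the claim.

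The main obstacle I anticipate is making Step 2 clean: one must be careful that the inequality $V(x) \geq \int_{X \setminus \mathfrak{C}} V(y)\mathcal{P}(x, \mathrm{d}y)$ holds uniformly for \emph{all} $x \in X$ (not only $x \notin \mathfrak{C}$) so that the absorbing conclusion applies at every point of $D$, including points of $\mathfrak{C}$ itself; the one-step decomposition above handles both cases simultaneously because $\tau_{\mathfrak{C}} \geq 1$ by the strict inequality in its definition. Everything else is a routine combination of the strong Markov property and the definition of $\phi$-irreducibility.
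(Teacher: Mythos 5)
Your proof is correct, but it takes a genuinely different route from the paper's. The paper argues by direct contradiction: assuming $\phi(A) > 0$ where $A = D^c$, it invokes the last exit decomposition $P^n(x_0,A) = \sum_{k=1}^n \int_\mathfrak{C} P^k (x_0, dy)\, {}_\mathfrak{C}P^{n-k}(y, A)$ to find a point $x_1 \in \mathfrak{C}$ and a time $m$ with ${}_\mathfrak{C}P^m(x_1,A) > 0$, i.e.\ $x_1$ reaches $A$ in $m$ steps while avoiding $\mathfrak{C}$. It then decomposes $\mathbb{E}_{x_1}[\tau_{\mathfrak{C}}^\beta]$ along the taboo kernel at time $m$, lower-bounding by $\int_A {}_\mathfrak{C}P^m(x_1,dw)\, \mathbb{E}_w[\tau_{\mathfrak{C}}^\beta] = \infty$, contradicting the finite-supremum hypothesis directly at $x_1 \in \mathfrak{C}$. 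Your argument instead shows the sublevel set $D = \{V < \infty\}$ is absorbing via the one-step drift inequality $V(x) \geq \int_{X\setminus\mathfrak{C}} V(y)\,\mathcal{P}(x,\mathrm{d}y)$, then invokes irreducibility to force $\phi(D^c) = 0$. Your route is more elementary and self-contained (no taboo kernels, no last exit decomposition, just a first-step analysis and countable additivity), and it yields the slightly stronger structural fact that $D$ is forward invariant almost surely; it is also robust to whether one uses the paper's strong notion of accessibility or the weaker standard one ($\sum_n P^n(x,A) > 0$), since $\mathcal{P}(x,D)=1$ for $x\in D$ already kills $P^n(x_0,D^c)$ for every $n$. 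The paper's route buys a more classical flavor tied to the taboo-kernel machinery used elsewhere in renewal-theoretic proofs, and it localizes the contradiction entirely inside $\mathfrak{C}$ without needing to discuss absorption. Either is fine; yours is arguably cleaner for this particular lemma.
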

\begin{proof}
Let $A \in \mathcal{B}$ be the set such that $\forall x \in A$,
$\mathbb{E}_x [\tau_{\mathfrak{C}}^{\beta}]=\infty$. \\ 
Suppose that $\phi(A)>0$. Since $\Psi_{n}$ is irreducible,
$\sum_{n=1}^\infty P^n(x,A)>0$ for all $x \in \mathfrak{C}$. Choose
$x_0 \in \mathfrak{C}$, by irreducibility there exists $n>0$ such that
$P^n(x_0,A)>0$. 

Define
the n-step taboo transition probability by 
$$
{_A}P^n(n,B):=P_x(\Phi_x \in B,
\tau_A \geq n) \,,
$$
where $x \in X, B \in \mathcal{B}$, to be the
probability of a transition to $B$ in n steps of the chain, avoiding
the set $A$. By the following last exit decomposition, we have 
$$
P^n(x_0,A) = \sum_{k=1}^n \int_\mathfrak{C} P^k (x_0, dy) _\mathfrak{C}P^{n-k}(y, A) >0 \,.
$$
Therefore, there exists $x_{1} \in \mathfrak{C}$  and $m>0$ such that
${_\mathfrak{C}}P^m(x_1,A)>0$. This implies
\begin{eqnarray*}
\mathbb{E}_{x_1} [\tau_{\mathfrak{C} ^\beta}]
& = & \sum_{n=0}^\infty n^\beta  \cdot   {}_\mathfrak{C}P^n(x_1 , \mathfrak{C}) \\
& \geq & \sum_{n=m} ^\infty n^\beta \int_A {}_\mathfrak{C}P^m(x_1,dw) \cdot {}_{\mathfrak{C}}P^{n-m}(w,\mathfrak{C})\\
&\geq& \int_A {}_\mathfrak{C}P^m(x_1,dw) \mathbb{E}_{w}[ \tau_{\mathfrak{C}}] \\
& = & \infty.
\end{eqnarray*}
But $\sup_{x \in \mathfrak{C}} \mathbb{E}_{x}[ \tau_{\mathfrak{C}}^{\beta}] <
\infty$ and $x_{1} \in \mathfrak{C}$ imply that $\mathbb{E}_{x_1} [\tau_{\mathfrak{C} ^\beta}] <\infty$, resulting in a contradiction. Therefore $\phi(A)=0$.
\end{proof}

By proposition \ref{pro:class}, if condition {\bf (N2)} holds, then
{\bf (N1)} automatically holds for
$\phi$-almost all $\mu = \delta_{x}$, $x \in X$.

\subsection{Decay of correlation}

Let $\xi(x)$ and $\eta(x)$ be two functions in $L^{\infty}(X)$. The
decay of correlation $C^{\xi, \eta}_{\mu}(n)$ is denoted by
$$
  C^{\xi, \eta}_{\mu}(n) := |\int (\mathcal{P}^{n} \eta)( x) \xi(x)
  \mu(\mathrm{d}x) - \int (\mathcal{P}^{n} \eta)( x) \mu(\mathrm{d}x) \int
\xi(x) \mu( \mathrm{d}x) | \,.
$$
We have the following proposition regarding the decay of correlation.
\begin{pro}
\label{doc}
If {\bf (A1), (A2), (N1)} and {\bf (N2)} are satisfied, then 
$$
  C^{\xi, \eta}_{\mu}(n)  \leq o(n^{\epsilon-\beta} )
$$
for any $\epsilon > 0$.
\end{pro}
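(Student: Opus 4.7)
The plan is to reduce the decay-of-correlation quantity $C^{\xi,\eta}_\mu(n)$ to a total-variation distance between two probability measures obtained by centering and splitting $\xi$, and then invoke Theorem \ref{rigorous} at the exponent $\beta - \epsilon$. Writing $\bar\xi := \int \xi\,d\mu$, I would introduce the finite signed measure $\sigma := (\xi - \bar\xi)\mu$, which has total mass $\sigma(X) = 0$, so that
$$
C^{\xi,\eta}_\mu(n) = \left| \int (\mathcal{P}^n \eta)(x)(\xi(x) - \bar\xi)\,\mu(dx) \right| = \left| \int \eta(y)\,(\sigma \mathcal{P}^n)(dy) \right|.
$$

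The next step is to convert this into a bound on total variation. The Jordan decomposition $\sigma = \sigma^+ - \sigma^-$ satisfies $\sigma^+(X) = \sigma^-(X) =: c$, and from $|\xi - \bar\xi| \leq 2\|\xi\|_\infty$ one obtains $c \leq \|\xi\|_\infty$. If $c = 0$ then $\xi$ is $\mu$-a.e. constant and $C^{\xi,\eta}_\mu \equiv 0$; otherwise the probability measures $\mu_1 := \sigma^+/c$ and $\mu_2 := \sigma^-/c$ satisfy $\sigma\mathcal{P}^n = c(\mu_1\mathcal{P}^n - \mu_2\mathcal{P}^n)$, whence
$$
C^{\xi,\eta}_\mu(n) \leq c\,\|\eta\|_\infty\,\|\mu_1 \mathcal{P}^n - \mu_2 \mathcal{P}^n\|_{TV}.
$$
Since the Radon--Nikodym densities $d\sigma^\pm/d\mu$ are bounded pointwise by $2\|\xi\|_\infty$, I can transfer return-time moment bounds from $\mu$ to $\mu_1,\mu_2$:
$$
\mathbb{E}_{\mu_i}\!\left[\tau_{\mathfrak{C}}^{\beta - \epsilon}\right] \leq \frac{2\|\xi\|_\infty}{c}\,\mathbb{E}_\mu\!\left[\tau_{\mathfrak{C}}^{\beta - \epsilon}\right], \quad i = 1, 2.
$$

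Finally I would apply Theorem \ref{rigorous} at exponent $\beta - \epsilon$. Condition (A1) gives irreducibility; condition (A2) supplies a uniform reference set together with strong aperiodicity (since $\theta(\mathfrak{C}) > 0$); applying Lemma \ref{t2m} to (N1) and (N2) gives $\mathbb{E}_\mu[\tau_{\mathfrak{C}}^{\beta - \epsilon}] < \infty$ and $\sup_{x \in \mathfrak{C}} \mathbb{E}_x[\tau_{\mathfrak{C}}^{\beta - \epsilon}] < \infty$ for every $\epsilon \in (0, \beta - 1)$. Combined with the moment-transfer bound, all hypotheses of Theorem \ref{rigorous} hold for the pair $(\mu_1, \mu_2)$, yielding
$$
\lim_{n \to \infty} n^{\beta - \epsilon} \|\mu_1 \mathcal{P}^n - \mu_2 \mathcal{P}^n\|_{TV} = 0,
$$
which together with the displayed inequality produces $C^{\xi, \eta}_\mu(n) = o(n^{\epsilon - \beta})$ for every $\epsilon > 0$. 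The only subtle point I anticipate is the moment-transfer step---confirming that the pointwise bound on $d\sigma^\pm/d\mu$ lifts to an inequality on $\beta$-th return-time moments---but this is immediate from the Radon--Nikodym representation; everything else is a direct application of results already established in the paper.
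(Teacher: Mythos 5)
Your proposal is correct, but it follows a genuinely different route from the paper. The paper first bounds
$C^{\xi,\eta}_\mu(n) \le \|\xi\|_{L^\infty}\|\eta\|_{L^\infty}\int \|\delta_{\mathbf E}\mathcal P^n - \mu\mathcal P^n\|_{TV}\,\mu(d\mathbf E)$,
then invokes the \emph{quantitative} estimate of Corollary \ref{rigorous2} on each pointwise distance $\|\delta_{\mathbf E}\mathcal P^n - \mu\mathcal P^n\|_{TV}$, and finally integrates the resulting bound $C\bigl(\mathbb E_{\mathbf E}[\tau_{\mathfrak C}^{\beta-\epsilon/2}] + \mathbb E_\mu[\tau_{\mathfrak C}^{\beta-\epsilon/2}]\bigr)n^{\epsilon/2-\beta}$ in $\mathbf E$ against $\mu$. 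The quantitative form is essential there because it supplies the $\mu$-integrable dominating constant needed to pass the $n\to\infty$ rate through the integral. Your argument instead centers $\xi$, forms the signed measure $\sigma = (\xi-\bar\xi)\mu$, and uses the Jordan decomposition to reduce everything to a \emph{single} pair of probability measures $\mu_1,\mu_2$, both absolutely continuous with respect to $\mu$ with bounded densities; the moment-transfer step correctly carries the return-time bound from $\mu$ to $\mu_1,\mu_2$. This allows you to conclude using only the \emph{qualitative} Theorem \ref{rigorous}, bypassing Corollary \ref{rigorous2} entirely. Both approaches deliver the same conclusion; yours isolates the fact that the decay of correlation reduces to a single TV convergence between two absolutely continuous reweightings of $\mu$, while the paper's isolates the integral-of-pointwise-rates structure and therefore needs the explicit constant. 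One caveat you already note correctly: the renewal-theoretic input (Theorem \ref{lindvall}/\ref{cor33}) requires the exponent to exceed $1$, so the argument should be run with $\epsilon\in(0,\beta-1)$; larger $\epsilon$ then follow trivially since $n^{\epsilon-\beta}$ only gets larger.
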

\begin{proof}
We have
\begin{eqnarray*}
& &  \left|\int (P^{n} \eta)(
  \mathbf{E}) \xi( \mathbf{E}) \mu( \mathrm{d}\mathbf{E}) - 
  \int (P^{n}\eta)( \mathbf{E}) \mu(\mathrm{d}\mathbf{E}) \int \xi( \mathbf{E}) \mu(
  \mathrm{d} \mathbf{E}) \right| \\
& = & \left| \int \xi (\mathbf{E}) \left( (P^{n}\eta)(\mathbf{E}) - 
\int (P^{n}\eta)(\mathbf{Z}) \mu(\mathrm{d}\mathbf{Z}) \right) 
\mu( \mathrm{d} \mathbf{E}) \right|\\
& \le & \|\xi\|_{L^{\infty}} \ \|\eta\|_{L^{\infty}} \ \int \|\delta_{\mathbf{E}}P^n - \mu P^n\|_{TV} \ 
\mu( \mathrm{d} \mathbf{E}) \,.
\end{eqnarray*}
Since {\bf (A1), (A2), (N1)} and
{\bf (N2)} hold, it follows from Corollary \ref{rigorous2} that
$$
  \|\delta_{\mathbf{E}}P^n - \mu P^n\|_{TV} \leq C (
  \mathbf{E}_{\mathbf{E}}[ \tau_{\mathfrak{C}}^{\beta-\epsilon/2}] +
  \mathbf{E}_{\mu}[ \tau_{\mathfrak{C}}^{\beta - \epsilon/2}]
  )n^{\epsilon/2-\beta}  \,.
$$
We have $\mathbf{E}_{\mu}[ \tau_{\mathfrak{C}}^{\beta- \epsilon/2}]  < \infty$ and 
$$
  \int  \mathbf{E}_{\mathbf{E}}[ \tau_{\mathfrak{C}}^{\beta - \epsilon/2}]  \mu(
  \mathrm{d} \mathbf{E} ) = \mathbf{E}_{\mu}[
  \tau_{\mathfrak{C}}^{\beta - \epsilon/2}] < \infty \,.
$$
Therefore, 
$$
  \|\delta_{\mathbf{E}}P^n - \mu P^n\|_{TV}  \leq O(1) \cdot
  n^{\epsilon/2 -\beta} = o( n^{\epsilon - \beta}) \,.
$$
This completes the proof.
\end{proof}

\subsection{Results for time-continuous process}

We have only showed sufficient conditions of polynomial convergence
rate for time-discrete Markov processes. If $\Psi_{t}$ is a
time-continuous Markov process, our method is also applicable after
some additional effort. Instead of investigating the infinitesimal generator of
$\Psi_{t}$, we will work on the time-$h$ sample chain of
$\Psi_{t}$, which is denoted by $\Psi_{n}^{h} := \Psi_{nh}$. Same as
the time-discrete case, we study the tail of return time to a uniform
reference set $\mathfrak{C}$.

To pass the existence of an invariant measure from $\Psi^{h}_{n}$ to $\Psi_{t}$, the
``continuity at zero'' is necessary. We will show that in the following propositions.

\begin{pro}
\label{c0}
Assume $\Psi^{h}_{n}$ is aperiodic and admits an invariant probability
measure $\pi_{h}$. If in addition, 
$$
  \| \pi_{h} \mathcal{P}^{\delta} - \pi_{h} \|_{TV}
  \rightarrow 0 \quad \mbox{ as } \delta \rightarrow 0 \,,
$$
then $\pi = \pi_{h}$ is invariant for any $\Psi_{t}$.
\end{pro}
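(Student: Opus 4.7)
The plan is to promote the time-$h$ invariance of $\pi_h$ to invariance under the entire continuous-time semigroup $\{\mathcal{P}^t\}_{t \geq 0}$ by a time-averaging construction. First I would use the standard fact that the Markov semigroup contracts total variation: from $\|\pi_h \mathcal{P}^\delta - \pi_h\|_{TV} \rightarrow 0$ as $\delta \rightarrow 0$ one gets
$$
\|\pi_h \mathcal{P}^{s+\delta} - \pi_h \mathcal{P}^s\|_{TV} \leq \|\pi_h \mathcal{P}^\delta - \pi_h\|_{TV} \rightarrow 0,
$$
so $s \mapsto \pi_h \mathcal{P}^s$ is TV-continuous on $[0, \infty)$. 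This lets me define the probability measure
$$
\tilde\pi := \frac{1}{h}\int_0^h \pi_h \mathcal{P}^s\, ds
$$
as a Bochner integral in the Banach space of signed measures with total variation norm.

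Next I would verify that $\tilde\pi$ is invariant for every $\mathcal{P}^t$. For $t \in [0, h]$, a change of variables yields
$$
\tilde\pi \mathcal{P}^t = \frac{1}{h}\int_t^{t+h} \pi_h \mathcal{P}^u\, du;
$$
I would split this integral at $u = h$ and, in the piece over $[h, t+h]$, use $\pi_h \mathcal{P}^h = \pi_h$ together with the shift $u = h + v$ to rewrite it as $h^{-1}\int_0^t \pi_h \mathcal{P}^v\, dv$. The two pieces then reassemble to $\tilde\pi$, so $\tilde\pi \mathcal{P}^t = \tilde\pi$ for $t \in [0, h]$ and, by the semigroup property, for all $t \geq 0$.

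Finally I would identify $\tilde\pi$ with $\pi_h$. The measure $\tilde\pi$ is in particular invariant for $\mathcal{P}^h$; under the irreducible, aperiodic setting of Section 3 the chain $\Psi^h_n$ admits a unique invariant probability measure by the standard theory of \cite{meyn2009markov}, forcing $\tilde\pi = \pi_h$ and hence $\pi_h \mathcal{P}^t = \tilde\pi \mathcal{P}^t = \tilde\pi = \pi_h$ for every $t \geq 0$. The main obstacle is precisely this uniqueness step: the hypotheses stated in the proposition list only aperiodicity, so one must tacitly lean on irreducibility from the surrounding framework. Continuity at zero by itself only delivers the bound $\|\tilde\pi - \pi_h\|_{TV} \leq h^{-1}\int_0^h \|\pi_h \mathcal{P}^s - \pi_h\|_{TV}\, ds$, which places $\tilde\pi$ close to $\pi_h$ in total variation; promoting that closeness to equality is exactly where the irreducibility assumption, and thus uniqueness of the invariant measure of $\mathcal{P}^h$, is indispensable.
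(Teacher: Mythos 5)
Your proof is correct but follows a genuinely different route from the paper's. The paper invokes Theorem 10.4.5 of Meyn--Tweedie to pass from invariance under $\mathcal{P}^{h}$ to invariance under $\mathcal{P}^{hj/k}$ for all positive rationals $j/k$, then approximates an arbitrary $t$ from the right by $t=\frac{a_n}{b_n}h+d_n$ with $d_n \downarrow 0$, writes $\pi_h \mathcal{P}^t = \pi_h \mathcal{P}^{(a_n/b_n)h}\mathcal{P}^{d_n} = \pi_h \mathcal{P}^{d_n}$, and concludes $\|\pi_h\mathcal{P}^t - \pi_h\|_{TV} = \lim_n\|\pi_h\mathcal{P}^{d_n}-\pi_h\|_{TV}=0$ by continuity at zero. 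You instead build the Ces\`aro average $\tilde\pi = h^{-1}\int_0^h \pi_h\mathcal{P}^s\,ds$ (well-defined as a Bochner integral precisely because continuity at zero plus semigroup contractivity gives TV-continuity of $s\mapsto \pi_h\mathcal{P}^s$), verify by the shift computation that $\tilde\pi$ is invariant for the whole semigroup, and then identify $\tilde\pi=\pi_h$ via uniqueness of the $\mathcal{P}^h$-invariant measure. Both arguments ultimately import something beyond the stated ``aperiodic'' hypothesis: the paper's cited Theorem 10.4.5 presumes $\psi$-irreducibility, and your identification step needs uniqueness, which likewise flows from irreducibility (condition \textbf{(A1)} of the surrounding framework). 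Your closing self-critique is therefore not a flaw but an accurate observation about what the proposition is tacitly assuming. The trade-off: the paper's argument avoids constructing any new measure and reads as a two-line estimate once the skeleton theorem is granted, while your Ces\`aro construction is more self-contained (it does not need the rational-skeleton fact) at the cost of appealing explicitly to uniqueness; it is also the more robust template, since the same averaging device works whenever one has TV-continuity and a uniqueness theorem, without reference to the arithmetic of time steps. One minor remark: the bound $\|\tilde\pi-\pi_h\|_{TV}\leq h^{-1}\int_0^h\|\pi_h\mathcal{P}^s-\pi_h\|_{TV}\,ds$ you mention at the end is not useful here because $h$ is fixed; you are right that the identification must come from uniqueness, not from that estimate.
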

\begin{proof}
Notice that $\pi_{h}$ is invariant for any $\Psi^{hj/k}_{n}$, where $j, k
\in \mathbb{Z}^{+}$ (Theorem 10.4.5 of \cite{meyn2009markov}). Then without loss of generality, assume $t/h \notin
\mathbb{Q}$. By the density of orbits in irrational rotations, there
exist sequences $a_{n}$, $b_{n} \in \mathbb{Z}^{+}$ such that
$$
  d_{n} := t - \frac{a_{n}}{b_{n}}h \rightarrow 0
$$
from right. Then 
$$
  \pi_{h} P^{t} = \pi_{h}P^{\frac{a_{n}}{b_{n}} h}
  P^{d_{n}} \,.
$$
Therefore,
$$
\| \pi_{h}P^{t} - \pi_{h} \|_{TV} \leq   \lim_{n \rightarrow \infty}
\| \pi_{h}P^{d_{n}} - \pi_{h} \|_{TV} = 0
$$
by the assumption of ``continuity at zero''. Hence
$\pi_{h}$ is invariant with respect to $P^{t}$. 
\end{proof}

\begin{pro}
Assume {\bf (A1), (A2) , (N1)} and {\bf (N2)} hold for
$\Psi_{n}^{h}$. If in addition 
$$
  \| \pi_{h} \mathcal{P}^{\delta} - \pi_{h} \|_{TV}
  \rightarrow 0 \quad \mbox{ as } \delta \rightarrow 0 \,,
$$
where $\pi_{h}$ is the invariant measure for
$\Psi^{h}_{n}$, then for any small $\epsilon
> 0$, 
$$
  \lim_{t\rightarrow \infty} t^{\beta - \epsilon} \| \mu P^{t} - \pi_{h}
  \|_{TV} = 0 \,.
$$
\end{pro}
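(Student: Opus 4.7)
The plan is to reduce the continuous-time statement to the discrete-time convergence theorem already proved for the sample chain $\Psi^h_n$, and then pay a small $\epsilon$ in the exponent to pass from the $n$-scale to the $t$-scale. First I would note that $\pi_h$ is invariant not just for $\mathcal{P} := P^h$ but for the entire semigroup $\{P^t\}$: conditions \textbf{(A1)}--\textbf{(A2)} make $\Psi^h_n$ strongly aperiodic, and \textbf{(N1)} combined with Theorem \ref{existence} delivers $\pi_h$, so the ``continuity at zero'' hypothesis places us exactly in the setting of Proposition \ref{c0}, giving $\pi_h P^t = \pi_h$ for every $t \geq 0$.

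Next, for any $t > 0$ I would decompose $t = nh + s$ with $n = \lfloor t/h\rfloor$ and $s \in [0, h)$, so that $\mu P^t = (\mu\mathcal{P}^n) P^s$ and $\pi_h = \pi_h P^s$. Since every Markov kernel is a contraction in total variation,
\[
\|\mu P^t - \pi_h\|_{TV} = \|(\mu \mathcal{P}^n - \pi_h) P^s\|_{TV} \leq \|\mu \mathcal{P}^n - \pi_h\|_{TV},
\]
which reduces the problem entirely to the discrete-time convergence of the sample chain.

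Finally, Lemma \ref{t2m} applied to \textbf{(N1)} gives $\mathbb{E}_\mu[\tau_{\mathfrak{C}}^{\beta-\epsilon/2}] < \infty$ and $\mathbb{E}_{\pi_h}[\tau_{\mathfrak{C}}^{\beta-\epsilon/2}] < \infty$, and applied to \textbf{(N2)} gives $\sup_{x \in \mathfrak{C}}\mathbb{E}_x[\tau_{\mathfrak{C}}^{\beta-\epsilon/2}] < \infty$. Theorem \ref{rigorous} with exponent $\beta - \epsilon/2$ and initial distributions $\mu$ and $\pi_h$ then produces $\lim_{n\to\infty} n^{\beta-\epsilon/2} \|\mu\mathcal{P}^n - \pi_h\|_{TV} = 0$, and since $n \geq t/(2h)$ for $t \geq 2h$,
\[
t^{\beta-\epsilon}\|\mu P^t - \pi_h\|_{TV} \leq (2h)^{\beta-\epsilon}\, n^{-\epsilon/2}\, \bigl(n^{\beta-\epsilon/2}\|\mu\mathcal{P}^n - \pi_h\|_{TV}\bigr) \longrightarrow 0
\]
as $t \to \infty$. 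No step is a genuine obstacle; everything is assembled from tools already developed in the excerpt. The continuity-at-zero hypothesis is used precisely once, to upgrade $\pi_h$ to an invariant measure for every $P^s$ so that the contraction estimate goes through, and the small $\epsilon/2$-loss in the exponent is the unavoidable price of converting the $n$-indexed rate for the sample chain into a $t$-indexed rate for the full semigroup.
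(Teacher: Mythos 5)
Your proposal is correct and follows the same route as the paper: use Proposition \ref{c0} to upgrade $\pi_h$ to a $P^t$-invariant measure, decompose $t = nh + r$ and apply the total-variation contraction of $P^r$ to reduce to the discrete sample chain, then invoke the discrete-time polynomial rate from Lemma \ref{t2m} and Theorem \ref{rigorous}. The only cosmetic difference is that you pay an extra $\epsilon/2$ when passing from $n$ to $t$; this is unnecessary since the ratio $t^{\beta-\epsilon}/n^{\beta-\epsilon}$ is bounded (it tends to $h^{\beta-\epsilon}$), so applying the discrete rate directly with exponent $\beta-\epsilon$ already suffices.
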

\begin{proof}
It follows from Proposition \ref{c0} that $\pi_{h}$ is invariant with
respect to $P^{t}$ for any $t > 0$. In addition, 
$$
  \| \mu P^{t} - \pi_{h}\|_{TV} = \| \mu P^{nh}P^{r} - \pi_{h}\|_{TV}
  = \| ( \mu P^{nh} - \pi_{h})P^{r} \|_{TV} \leq \|\mu P^{nh} -
  \pi_{h} \|_{TV} \,,
$$
where $n$ is the greatest integer that is smaller than $t/h$. This
implies 
$$
  \lim_{t\rightarrow \infty} t^{\beta - \epsilon} \| \mu P^{t} - \pi_{h}
  \|_{TV} = 0 \,.
$$

\end{proof}

In many situations, especially when the time step has to be small, it
may be easier to simulate $\tau_{\mathfrak{C}}$ for $\Psi_{t}$ instead
of $\Psi^{h}_{n}$. Let $\hat{\tau}_{\mathfrak{C}}$ be the
first-passage time to $\mathfrak{C}$ for
$\Psi^{h}_{n}$. Let $h$ be a fixed parameter. The first return time
and first passage time for
$\Psi_{t}$ are defined as
$$
  \tau_{\mathfrak{C}} = \tau_{\mathfrak{C}}(h)  = \inf_{ t \geq h} \{ \Psi_{t} \in \mathfrak{C} \}
  \mbox{ and } \sigma_{\mathfrak{C}} =  \inf_{ t \geq 0} \{ \Psi_{t}
  \in \mathfrak{C} \} \,.
$$
We drop the notation $(h)$ when it does not lead to confusion. Here we have to treat return time and passage time differently because otherwise $\sigma_{\mathfrak{C}}$ is zero for all initial conditions
within $\mathfrak{C}$. We say  {\bf (N1)} and {\bf (N2)} hold for
$\Psi_{t}$ with parameters $h$ and $\beta$ if 
\begin{itemize}
  \item[{\bf (N1)}] Distributions $\mathbb{P}_{\mu}[
    \tau_{\mathfrak{C}}(h) \geq t]$ and  $\mathbb{P}_{\pi}[
    \tau_{\mathfrak{C}}(h) \geq t]$ have polynomial tails $\sim
    t^{-\beta}$ for some $\beta > 1$.
\item[{\bf (N2)}] Function
$$
  \gamma (x) = \sup_{ t \geq h}\frac{\mathbb{P}_{x}[
    \tau_{\mathfrak{C}}(h) > t]}{t^{-\beta}} 
$$
is uniformly bounded on $\mathfrak{C}$.  
\end{itemize}

The following theorem gives the relation
between tail of $\hat{\tau}_{\mathfrak{C}}$ and tail of $\tau_{\mathfrak{C}}$.

\begin{thm}
\label{c2d}
Assume {\bf (N1)} and {\bf (N2)} hold for $\Psi_{t}$ with parameter
$h$ and $\beta$. If further 
$$
  \inf_{x \in \mathfrak{C}}\mathbb{P}_{x}[ \Psi_{h} = \Psi_{0}  ] >
  \gamma > 0 
$$
for the step size $h$, then for any $\epsilon > 0$,  {\bf (N1)}
and {\bf (N2)}  hold for $\Psi^{h}_{n}$ with parameter $\beta - \epsilon$.
\end{thm}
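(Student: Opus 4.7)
The plan is to bound the discrete first passage time $\hat{\tau}_{\mathfrak{C}}$ of $\Psi_n^h$ by iterating the continuous return time $\tau_{\mathfrak{C}}(h)$ of $\Psi_t$, and showing that at each continuous visit to $\mathfrak{C}$ the discrete sample chain captures the visit with probability at least $\gamma$, so that only a geometric number of continuous visits are needed.

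First I would define recursively $V_0=0$ and, for $k\geq 1$, $V_k=V_{k-1}+\tau_k$, where $\tau_k$ is an independent copy (in the strong Markov sense) of $\tau_{\mathfrak{C}}(h)$ started from $\Psi_{V_{k-1}}$. For $x\in\mathfrak{C}$, the strong Markov property at $V_{k-1}$ combined with hypothesis {\bf (N2)} for $\Psi_t$ gives $\mathbb{P}[\tau_k>t\mid\mathcal{F}_{V_{k-1}}]\leq Ct^{-\beta}$ uniformly, and by construction $\tau_k\geq h$, so $V_k\geq V_{k-1}+h$. At each visit $V_k$ introduce the capture event $A_k=\{\Psi_{V_k+s}=\Psi_{V_k}\text{ for all }s\in[0,h]\}$; in the jump-process setting appropriate to the paper's models, the $\gamma$-hypothesis and strong Markov at $V_k$ yield $\mathbb{P}[A_k\mid\mathcal{F}_{V_k}]\geq\gamma$. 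The unique sample time $n_kh:=\lceil V_k/h\rceil h$ lies in $[V_k,V_k+h)$, so on $A_k$ we have $\Psi_{n_kh}=\Psi_{V_k}\in\mathfrak{C}$, whence $\hat{\tau}_{\mathfrak{C}}\leq n_k\leq V_k/h+1$.

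Setting $K=\inf\{k\geq 1:A_k\text{ holds}\}$, iterated strong Markov yields $\mathbb{P}[K>k]\leq(1-\gamma)^k$ and $\hat{\tau}_{\mathfrak{C}}\leq V_K/h+1$. With $p=\beta-\epsilon$ (choosing $\epsilon$ so small that $p>1$), Minkowski's inequality gives
\[
\|V_K\|_p\leq\sum_{k\geq 1}\|\tau_k\,\mathbf{1}_{K\geq k}\|_p.
\]
The key observation is that $\{K\geq k\}\subset\{K\geq k-1\}=\{A_1^c\cap\cdots\cap A_{k-2}^c\}$ is determined by $\Psi_t$ on $[0,V_{k-2}+h]\subset[0,V_{k-1}]$, hence is $\mathcal{F}_{V_{k-1}}$-measurable. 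Strong Markov at $V_{k-1}$ then produces
\[
\mathbb{E}[\tau_k^p\,\mathbf{1}_{K\geq k}]\leq\mathbb{E}\bigl[\mathbf{1}_{K\geq k-1}\,\mathbb{E}[\tau_k^p\mid\mathcal{F}_{V_{k-1}}]\bigr]\leq C'(1-\gamma)^{k-2},
\]
where $C'=\sup_{y\in\mathfrak{C}}\mathbb{E}_y[\tau_{\mathfrak{C}}(h)^p]<\infty$ by Lemma \ref{t2m} applied to {\bf (N2)}. Summing the geometric series gives $\mathbb{E}_x[V_K^p]<\infty$ uniformly in $x\in\mathfrak{C}$, and Markov's inequality then yields $\mathbb{P}_x[\hat{\tau}_{\mathfrak{C}}>n]\leq Cn^{-(\beta-\epsilon)}$ uniformly, which is {\bf (N2)} for $\Psi_n^h$ at level $\beta-\epsilon$. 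The {\bf (N1)} statement follows by the same iteration, with the initial leg $V_1$ taken under $\mu$ (resp.\ $\pi$) and bounded by {\bf (N1)} for $\Psi_t$; all subsequent legs start in $\mathfrak{C}$ and are controlled as above.

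The principal technical obstacle is the delicate measurability of the capture events: $A_k$ is determined by $\Psi_t$ on $[V_k,V_k+h]$, which lies \emph{after} $V_k$, so $\{K\geq k\}$ itself is not $\mathcal{F}_{V_{k-1}}$-measurable. Replacing $\{K\geq k\}$ by the slightly larger $\{K\geq k-1\}$ restores the Markov decoupling at the cost of a single factor of $(1-\gamma)^{-1}$ in the geometric rate, which is exactly what makes the Minkowski sum converge.
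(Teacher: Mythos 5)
Your proof is correct, and it takes a genuinely different route from the paper's. Both proofs share the same skeleton: iterate continuous return times to $\mathfrak{C}$, and at each continuous visit use the holding probability $\gamma$ to argue that a geometric number of attempts suffices for the time-$h$ sample chain to register the visit. Beyond that skeleton the two arguments diverge. The paper establishes a \emph{tail} estimate directly: it introduces the count $N$ of ``false returns'', writes $\{\hat\tau_{\mathfrak{C}}>n^{1+\delta}\}\subset\{N>n^\delta\}\cup\bigcup_{k\le n^\delta}\{\xi_{k+1}-\xi_k>n,\ N>k\}$, bounds the union term-by-term via Markov's inequality on a $(\beta-\epsilon/2)$-moment, and then optimizes the free parameter $\delta$ to recover an exponent $\beta-\epsilon$. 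Your argument instead establishes a \emph{moment} estimate: you bound $\|V_K\|_p$ with $p=\beta-\epsilon$ via Minkowski's inequality applied to $V_K=\sum_k\tau_k\mathbf{1}_{K\ge k}$, use the $\mathcal{F}_{V_{k-1}}$-measurable enlargement $\{K\ge k-1\}$ to decouple $\tau_k$ from the indicator via the strong Markov property, and sum a geometric series; a single application of Markov's inequality at the end converts the uniform moment bound into the required uniform tail bound. Your iteration is also simpler structurally: you restart each leg at the continuous hitting time $V_{k-1}$ itself, whereas the paper restarts at the rounded-up sample time $\xi_{k-1}h$, which forces it to introduce the auxiliary quantity $\sigma_{\mathfrak{C}}$ and the comparison $\tau_{\mathfrak{C},r}\le\tau_{\mathfrak{C}}(h)$. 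What your route buys is a cleaner proof with no free parameter to tune and a sharper intermediate result (finite $(\beta-\epsilon)$-moments, not just a tail bound at $n^{1+\delta}$); what the paper's buys is avoidance of $L^p$-norm machinery in favor of elementary union bounds. The measurability point you flag --- that $\{K\ge k\}$ is not $\mathcal{F}_{V_{k-1}}$-measurable, but $\{K\ge k-1\}$ is since $V_{k-2}+h\le V_{k-1}$ --- is a genuine subtlety and your fix, costing one factor of $(1-\gamma)^{-1}$, is exactly right. One small caveat, which you already note and which applies equally to the paper's proof: interpreting the hypothesis $\mathbb{P}_x[\Psi_h=\Psi_0]>\gamma$ as a lower bound on $\mathbb{P}[A_k\mid\mathcal{F}_{V_k}]$ (i.e.\ staying constant on the whole interval $[V_k,V_k+h]$) is valid in the pure-jump setting with continuous jump distributions, where $\{\Psi_h=\Psi_0\}$ coincides a.s.\ with ``no jump on $[0,h]$''.
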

\begin{proof}
Define the following stopping times $\tau_{n}$ and random times
$\xi_{n}$. $\xi_{0} = 1$. $\tau_{n} = \inf_{t \geq \xi_{n-1}h}
\{ \Psi_{t} \in \mathfrak{C} \}$. $\xi_{n} = \left \lceil \tau_{n}/h
  \right \rceil$. It is possible that $\Psi_{t}$ leaves $\mathfrak{C}$
  after $\tau_{n}$ but before $\xi_{n}$ and thus makes a ``false return''. To verify  {\bf (N1)}
and {\bf (N2)}  for $\Psi^{h}_{n}$, we need to estimate the number of
``false returns''.

Let $N$ be the number of ``false returns'' of $\Psi_{t}$:
$$
  N =  \inf_{n > 0} \{\Psi_{\xi_{n} h} \in \mathfrak{C} \} \,.
$$
It is easy to see that $\hat{\tau}_{\mathfrak{C}} = \xi
_{N}$. Therefore, for any small $\sigma > 0$, we have
$$
  \{ \xi_{N} > n^{1 + \delta} \} \subset \{N > n^{\delta} \} \cup
  \bigcup_{k = 0}^{\left \lfloor n^{\delta} \right \rfloor} \{
  \xi_{k+1} - \xi_{k} > n, N > k \} \,.
$$

Without loss of
generality we assume $n > 1$ in the tail estimates throughout the proof. For any $\Psi_{0}$, by the Markov property, we have
$$
  \mathbb{P}_{\Psi_{0}}[N = n | N > n-1] \geq \mathbb{P}_{\Psi_{\tau_{n}}}[
  \Psi_{\xi_{n}h} = \Psi_{\tau_{n}}] .
$$
Since $\Psi_{\tau_{n}} \in \mathfrak{C}$, this probability is at least
$\delta$. Therefore, we have
$$
  \mathbb{P}_{\Psi_{0}}[ N > n] \leq (1 - \gamma)^n 
$$
for any $\Psi_{0}$ and $n > 1$. 

Let $\epsilon > 0$ be an arbitrary small number. For $k \geq 1$, we have
\begin{eqnarray*}
\mathbb{P}_{\Psi_{0}}[ \xi_{k+1} - \xi_{k} > n , N > k] & \leq  &
\mathbb{P}_{\Psi_{0}}[\tau_{k+1} - \xi_{k}h > (n - 1)h , N > k]\\
&\leq& \mathbb{P}_{\Psi_{0}}[\tau_{k+1} - \xi_{k} h > (n-1)h ,
\Psi_{\xi_{k}h} \notin \mathfrak{C}]\\
&=& \mathbb{P}_{\Psi_{\xi_{k} h}}[ \sigma_{\mathfrak{C}}^{\beta - \epsilon/2} > (n -
1)^{\beta - \epsilon/2} h^{\beta - \epsilon/2} , \Psi_{\xi_{k}h} \notin \mathfrak{C}] \\
&\leq& \frac{ \mathbb{E}_{\Psi_{\xi_{k} h}}[\sigma_{\mathfrak{C}}^{\beta
  -\epsilon/2}
    \mathbf{1}_{\Psi_{\xi_{k}h} \in \mathfrak{C}}]}{(n - 1)^{\beta - \epsilon/2}
    h^{\beta - \epsilon/2}}  =  \frac{ \mathbb{E}_{\Psi_{\xi_{k}
        h}}[\sigma_{\mathfrak{C}}^{\beta - \epsilon/2}]}{(n - 1)^{\beta
      - \epsilon/2}
    h^{\beta - \epsilon/2}}  \,,
\end{eqnarray*}
where $\mathbb{P}_{\Psi_{\xi_{k}h}}$ and
$\mathbb{E}_{\Psi_{\xi_{k}h}}$ mean letting the initial distribution
to be $\Psi_{\xi_{k}h}$. The second to last inequality follows from Markov
inequality. Then it is
sufficient to show that $\mathbb{E}_{\Psi_{\xi_{k}
    h}}[\sigma_{\mathfrak{C}}^{\beta - \epsilon/2}] < \infty$. 

Define $\tau_{\mathfrak{C}, r} = \inf_{t > r}\{ \Psi_{t} \in
\mathfrak{C} \} - r$ be the first return time when starting from $\Psi_{r}$. Then 
\begin{eqnarray*}
 \mathbb{E}_{\Psi_{\xi_{k}
    h}}[\sigma_{\mathfrak{C}}^{\beta - \epsilon/2}] &  = & \int_{\mathfrak{C}}
\int_{0}^{h} \mathbb{E}_{x}[ \tau_{\mathfrak{C}, r}^{\beta - \epsilon/2}]
\mathbb{P}_{\Psi_{0}}[ \xi_{k} h - \tau_{k} = r,
\Psi_{\tau_{k}} = x] \mathrm{d}r \mathrm{d}x  \,.\\
\end{eqnarray*}
Notice that $r \leq h$. For each sample path starting from $\tau_{k}$, we have
$\tau_{\mathfrak{C}, r} \leq \tau_{\mathfrak{C}}(h)$. This implies
$$
  \mathbb{E}_{\Psi_{\xi_{k}
    h}}[\sigma_{\mathfrak{C}}^{\beta - \epsilon/2}] \leq \int_{\mathfrak{C}}
\int_{0}^{h} \mathbb{E}_{x}[ \tau_{\mathfrak{C}}^{\beta - \epsilon/2}]
\mathbb{P}_{\Psi_{0}}[ \xi_{k} h - \tau_{k} = r,
\Psi_{\tau_{k}} = x] \mathrm{d}r \mathrm{d}x \leq \sup_{x \in
  \mathfrak{C}} \mathbb{E}_{x}[ \tau_{\mathfrak{C}}(h)^{\beta -
  \epsilon/2}] \,.
$$ 
Since {\bf (N2)} holds for $\Psi_{t}$, by Lemma \ref{t2m}, we have
$$
  \sup_{x \in
  \mathfrak{C}} \mathbb{E}_{x}[ \tau_{\mathfrak{C}}(h)^{\beta -
  \epsilon/2}]  \leq C_{0}
$$
for some constant $C_{0}$. This implies
$$
  \mathbb{P}_{\Psi_{0}}[ \xi_{k+1} - \xi_{k} > n , N > k]  \leq C
  n^{-(\beta - \epsilon/2)}
$$
for some constant $C$. 

If $k = 0$, we need to estimate
$$
  \mathbb{P}_{\Psi_{0}}[ \tau_{\mathfrak{C}}(h) > (n-1)h] 
$$
for $\Psi_{0} = x$, $x \in \mathfrak{C}$. Since {\bf (N1)} and {\bf
(N2) }hold for $\Psi_{t}$, we have
$$
  \mathbb{P}_{x}[ \tau_{\mathfrak{C}}(h) > (n-1)h] \leq C'
  n^{-\beta} \leq C' n^{-(\beta - \epsilon/2)} \,,
$$
where $C'$ is independent of $x$. Therefore, we have
\begin{eqnarray*}
\mathbb{P}_{x}[\xi_{N} > n^{1 + \delta}] & \leq  & \mathbb{P}[ N >
n^{\delta}] + \sum_{k = 0}^{\left \lfloor n^{\delta} \right \rfloor}
\mathbb{P}_{x}[\xi_{k+1} - \xi_{k} > n, N > k] \\
&\leq& (1 - \gamma)^{n^{\delta}} + n^{\delta} \cdot \max\{C, C'\}
n^{-(\beta - \epsilon/2)} \,.
\end{eqnarray*}
Note that $ (1 - \gamma)^{n^{\delta}}$ converges to $0$ faster than
$n^{-\beta - \epsilon/2}$. For any small $\epsilon > 0$, by making $\delta$
sufficiently small, we have 
$$
  \mathbb{P}_{x}[ \hat{\tau}_{\mathfrak{C}} > n] \leq C_{1} n^{-(\beta - \epsilon)}
$$
for some constant $C_{1}$ that is independent of $x \in
\mathfrak{C}$. This verifies condition {\bf (N2)} for $\Phi^{h}_{n}$.

Similarly, if $\Psi_{0} \sim \mu$ or
$\Psi_{0} \sim \pi$, there exist constants $C'_{\mu}$ or $C'_{\pi}$
such that
$$
  \mathbb{P}_{\mu}[ \tau_{\mathfrak{C}}(h) > (n-1)h] \leq C'_{\mu}
  n^{-(\beta - \epsilon/2)}  
$$
or
$$
  \mathbb{P}_{\pi}[ \tau_{\mathfrak{C}}(h) > (n-1)h] \leq C'_{\pi}
  n^{-(\beta - \epsilon/2)}   \,.
$$
Same calculation as above verifies condition {\bf (N1)} for
$\Psi^{h}_{n}$. This completes the proof. 

\end{proof}

Finally, it is trivial to show that if {\bf (A1), (A2), (N1)} and
{\bf (N2)} hold for $\Psi^{h}_{n}$, then Proposition \ref{doc} remains
true for $\Psi_{t}$.

\subsection{Summary of conclusions.} In summary, assume {\bf (A1), (A2), (N1)} and
{\bf (N2)} hold for $\Psi_{n}$, then
\begin{itemize}
  \item[(a)] $\Psi_{n}$ admits an invariant probability measure $\pi$.
\item[(b)] Polynomial convergence rate to $\pi$:
$$
\lim_{n\rightarrow \infty}  n^{\beta - \epsilon} \| \mu \mathcal{P}^{n} - \pi
\|_{TV}  = 0 
$$
for any $\epsilon > 0$.
\item[(c)] Polynomial decay rate of correlation:
$$
  \lim_{n\rightarrow \infty}  n^{\beta - \epsilon} C_{\pi}^{\xi, \eta} = 0 
$$
for any $\epsilon > 0$.
  \item[(d)] Polynomial convergence rate to $\pi$. for any $\epsilon >
    0$, we have
$$
  \lim_{n\rightarrow \infty}  n^{\beta - \epsilon} \| \delta_{x} \mathcal{P}^{n} - \pi \|_{TV}  = 0
$$
for $\phi$-almost every $x \in X$. 
\end{itemize}
$\pi$ in (b) and (d) should be replaced by $\hat{\pi}P^{n}$ Pn if $\hat{\pi}$ is numerically generated.

If $\Psi_{t}$ is a time-continuous Markov process with transition
kernel $\mathcal{P}^{t}$. Assuming that the ``continuity at zero'' condition
in Proposition \ref{c0} is satisfied, if {\bf (A1), (A2), (N1)} and
{\bf (N2)} hold for $\Psi^{h}_{n}$, then conclusions (a)-(d)
also hold for $\Psi_{t}$. If {\bf (N1)} and
{\bf (N2)} hold for $\Psi_{t}$, then conclusions (a)-(d) still hold
for $\Psi_{t}$ as one can put $\epsilon/2$ into Theorem \ref{c2d}
and Theorem \ref{rigorous}.

\section{Example: stochastic energy exchange model}
\subsection{Derivation from deterministic dynamics} Consider a long
tube of gas that is connected to two thermalized boundaries. Assume that
all gas molecules are rigid moving particles and that the only interaction
between particles is rigid body collision. Then we have a
very complicated deterministic dynamical system like as described in
Figure \ref{tube}. Apparently this is a very difficult multi-body
problem. To reduce its significant difficulty, one approach is to
``localize'' all particles such that particles are trapped in
one-dimensional cells, like the one described in Figure
\ref{lcp}. This is called the locally confined particle system, in which energy transport still exists because neighboring  particles
can collide through the gates between cells. 

\begin{figure}[htbp]
\centerline{\includegraphics[width = \linewidth]{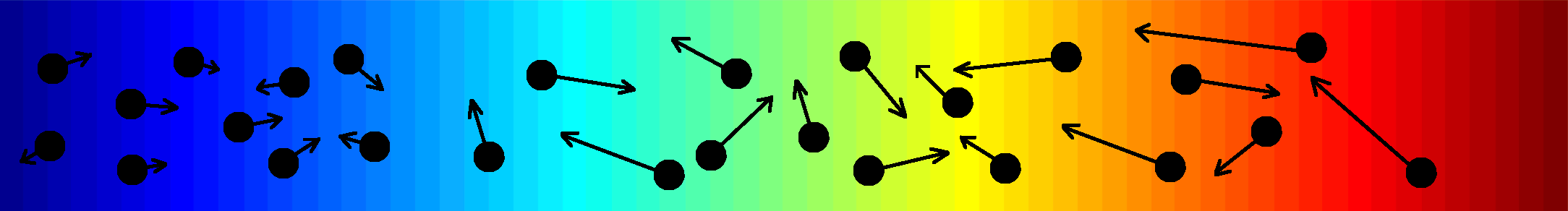}}
\caption[]{\label{tube} Moving particles in a long tube.}
\end{figure}

\begin{figure}[htbp]
\centerline{\includegraphics[width = \linewidth]{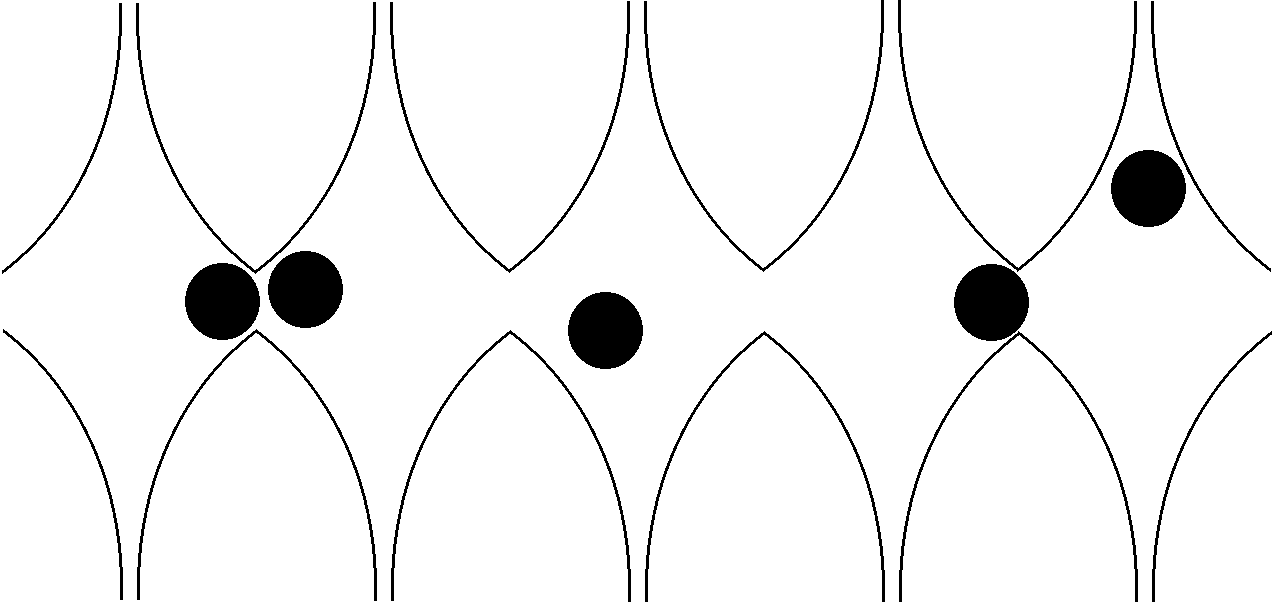}}
\caption[]{\label{lcp} Particles are confined in a chain of cells.}
\end{figure}

As probably the simplest deterministic microscopic heat conduction model, the locally confined
particle system is studied in various literatures. For example, under
certain assumptions, the ergodicity of this model has been proved in
\cite{bunimovich1992ergodic}. Further rigorous investigation of the locally confined
particle system is known to be difficult. Each cell in the locally
confined particle system is a chaotic billiard table. And chaotic
billiards are known to have many stochastic properties. Therefore, a
natural strategy is to use a Markov process to approximate the change
of particle energy in the locally confined particle system. 

In \cite{li2015stochastic}, numerical simulations of first particle-particle
collision in the locally confined particle system shows that for a
pair of adjacent particles with energy $(E_{1}, E_{2})$, the first
particle-particle collision time has an exponential tail with slope
$R(E_{1}, E_{2})$. In addition, when $\min \{E_{1}, E_{2}\} \ll 1$, we
have $ R(E_{1}, E_{2}) \sim \sqrt{ \min \{E_{1}, E_{2}\} }$. The
heuristic justification of this rate is that when a particle has low
energy, it has to move to the gate between cells by itself in order to
have the next energy exchange. Hence a very slow particle dominates the next particle-particle collision time. If in addition we assume
energy exchange in a particle-particle collision is done in a ``random
halves'' way, we have a stochastic energy exchange model described in
the next subsection.

\subsection{Model Description}

Consider a chain of $N$ linearly ordered lattice sites $\{1,2,\cdots, N\}$, each
storing a fixed amount of energy $e_i$, $i=1,2,\cdots, N$. The chain is
connected to two heat baths at the ends with temperatures $T_L$
and $T_R$, respectively.

An exponential clock is associated with each pair of adjacent sites,
with rate $R(e_i, e_{i+1})=\sqrt{\text{min}\{e_i, e_{i+1}\}}$, called
the stochastic energy exchange rate. Two clocks are associated with
the ends of the chain and the heat baths. The clock between the left
(resp. right) heat bath and the first (resp. last) site has a rate
$\sqrt{\min \{ T_{L}, e_{1}}\}$ (resp. $\sqrt{\min\{ T_{R}, e_{N}}\}$). 

When the $i$-th clock rings, sites $i$ and $i+1$ exchange energy as 
$$
(e_i',e_{i+1}')=(p(e_i+e_{i+1}), (1-p)(e_i+e_{i+1})) \,,
$$
where $p$ satisfies the uniform distribution on $(0,1)$ and is
independent of everything else. When
a clock involving heat bath rings, the corresponding site exchanges
energy with an exponential random variable with mean $T_L$ (or
$T_R$) in the same ``random halves'' fashion. This is to say, we have
$$
  e_{1}' = p( e_{1} + \rho_{L}), \quad \phi_{L} \sim Exp(T_{L}) 
$$
and
$$
  e_{N}' = p( e_{N} + \rho_{R}), \quad \rho_{R} \sim Exp(T_{R})  \,,
$$
where $p$ still satisfies the uniform distribution on $(0, 1)$ and is
independent of everything else. For the sake of simplicity, clocks at left and right boundaries are
denoted by clock $0$ and clock $N+1$, respectively.

It is easy to see that the stochastic energy exchange model generates
a Markov jump process $\mathbf{E}_{t}=(e_1(t), \cdots, e_N(t) )$ on
$\mathbb{R}^{N}_{+}$, where $e_{i}(t)$ represents the site
$i$ energy at time $t$. We denote the transition kernel of
$\mathbf{E}_{t}$ by $P^{t}( \mathbf{E}, \cdot)$ for $\mathbf{E} \in
\mathbb{R}^{N}_{+}$. 

Let $h > 0$ be a fixed number that represents the step size. The
time-$h$ sampling chain of $\mathbf{E}_{t}$ is denoted by
$\mathbf{E}^{h}_{n}$, or simply $\mathbf{E}_{n}$ when it does not lead to a
confusion. The transition kernel of $\mathbf{E}_{n}$ is denoted by
$P(\mathbf{E}, \cdot)$.

\subsection{Verifying {\bf (A1)} and {\bf (A2)}.}

We will first work on the time-$h$ chain $\mathbf{E}_{n}$. The
verification of analytical conditions of $\mathbf{E}_{n}$ is based on
the following Theorem.

\begin{thm}
\label{urs1}
For any set $K \subset \mathbb{R}^{N}_{+}$ of the form $K = \{(e_{1},
\cdots, e_{N} ) \,|\, 0 < c_{i} \leq e_{i} \leq C_{i} , i = 1 \sim N \}$ and any $h > 0$,
there exists a constant $\eta > 0$ such that
$$
  P(\mathbf{E}, \cdot) > \eta U_{K}(\cdot) \,,
$$
for any $\mathbf{E} \in K$, where $U_{K}$ is the uniform probability distribution over $K$.
\end{thm}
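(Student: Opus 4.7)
The plan is to construct an explicit minorization for $P(\mathbf{E},\cdot)$ by conditioning on a specific favorable pattern of clock rings (with controlled random-variable ranges) during $[0,h]$, and then computing a uniform lower bound on the resulting conditional density against Lebesgue measure.

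\textbf{Favorable event.} Fix constants $0 < \varepsilon < 1/2$ and $M > 1$ (to be chosen). For $\mathbf{E} \in K$, let $A$ be the event that within $[0,h]$: the left boundary clock rings exactly once in $I_0 := (0, h/(N+2))$; each internal clock $i$ rings exactly once in $I_i := (ih/(N+2), (i+1)h/(N+2))$ for $i = 1, \ldots, N-1$; the right boundary clock rings exactly once in $I_N := (Nh/(N+2), (N+1)h/(N+2))$; no clock rings in $((N+1)h/(N+2), h)$; and $\rho_L, \rho_R \in [\varepsilon, M]$, $p_0, \ldots, p_N \in [\varepsilon, 1-\varepsilon]$. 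By choosing $\varepsilon$ small and $M$ large (depending only on $K$, $T_L$, $T_R$, $h$), along any path in $A$ all intermediate site energies remain in a compact subset of $(0,\infty)$, so every clock rate $R(\cdot,\cdot)$ stays bounded above and below. Standard competing-exponential estimates then yield $\inf_{\mathbf{E} \in K}\mathbb{P}_\mathbf{E}(A) \geq \alpha > 0$.

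\textbf{Density via change of variables.} Conditional on $A$, the final state $\mathbf{E}_h$ depends only on the independent random variables $\omega = (\rho_L, p_0, p_1, \ldots, p_{N-1}, p_N, \rho_R)$ (the ring times within their windows do not affect the final state) through
\begin{equation*}
u_1 = p_0(e_1+\rho_L), \quad e_i' = p_i(u_i+e_{i+1}), \quad u_{i+1} = (1-p_i)(u_i+e_{i+1}) \ \ (1 \leq i \leq N-1), \quad e_N' = p_N(u_N+\rho_R).
\end{equation*}
Treating $(u_1, p_0, \rho_R)$ as auxiliary parameters and $(p_1, \ldots, p_N)$ as active coordinates, the Jacobian of the active map $(p_1,\ldots,p_N) \mapsto (e_1',\ldots,e_N')$ is lower triangular with determinant $\prod_{i=1}^{N-1}(u_i+e_{i+1}) \cdot (u_N+\rho_R)$, uniformly bounded between positive constants on the compact auxiliary region. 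Solving the recursion yields $p_i = e_i'/(u_i+e_{i+1})$ and $u_{i+1} = u_i + e_{i+1} - e_i'$. For any $(\mathbf{E},\mathbf{F}) \in K\times K$, one can choose a single nonempty open auxiliary set $U^\ast$ on which all $u_i$ stay strictly positive, all solved $p_i$ stay in a fixed compact subset of $(0,1)$, and $p_N \in [\varepsilon,1-\varepsilon]$. Integrating $f_\omega/|J|$ over $U^\ast$ gives the conditional density $g_\mathbf{E}(\mathbf{F})$, uniformly bounded below by some $\beta > 0$ on $K\times K$.

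\textbf{Conclusion.} Combining the two steps, for any Borel $B \subset K$,
\begin{equation*}
P(\mathbf{E}, B) \geq \mathbb{P}_\mathbf{E}(A,\, \mathbf{E}_h \in B) \geq \alpha \int_B g_\mathbf{E}(\mathbf{F})\, d\mathbf{F} \geq \alpha\beta |K|\, U_K(B),
\end{equation*}
so one may take $\eta = \alpha\beta |K|$. The main obstacle is the uniform choice of $U^\ast$ in the density step: one must simultaneously keep every intermediate $u_i$ positive, keep each solved $p_i$ strictly inside $(0,1)$, and avoid degeneration of the Jacobian, uniformly over every pair $(\mathbf{E}, \mathbf{F}) \in K \times K$. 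This amounts to a soft compactness/continuity argument once the recursion is written out, but it is where essentially all the bookkeeping lives.
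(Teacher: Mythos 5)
Your proposal is correct and lands on the same conclusion as the paper, but the construction is genuinely different. The paper's proof uses a drain--inject--redistribute scheme in three stages: first ring each internal clock in sequence to reduce every site energy to $[\epsilon/2,\epsilon]$ (passing the excess to the right bath), then inject a controlled large amount of energy from the right bath, and finally ring the clocks right-to-left to deposit the target amount $e_j$ at each site. This uses roughly $2N+1$ rings, and the density lower bound is assembled event-by-event (each redistribution event $F_j$ contributes a factor $\mathrm{const}\cdot \mathrm{d}e_j$). You instead perform a single left-to-right pass of $N+1$ rings, front-loading a large injection $\rho_L$ at the left boundary so that the traveling surplus $u_i$ stays uniformly positive, and then compute the Jacobian of the composite map $(p_1,\ldots,p_N)\mapsto(e_1',\ldots,e_N')$ explicitly, which is lower triangular with a bounded determinant. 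Both approaches reduce to the same uniform compactness check at the end; the place you flag as ``where essentially all the bookkeeping lives'' (choosing the auxiliary region $U^{\ast}$ so that all intermediate $u_i>0$ and all solved $p_i$ stay in a fixed compact subinterval of $(0,1)$, uniformly over $K\times K$) is indeed the analogue of the paper's check that each $F_j$ has probability bounded below. A couple of small points worth tightening: you should state explicitly that in the favorable event $A$ no clock other than the designated one rings in each window $I_i$ (you only say this for the final window), and the statement ``a single nonempty open auxiliary set $U^{\ast}$'' should be ``an auxiliary set of measure uniformly bounded below'' --- the set will depend on $(\mathbf{E},\mathbf{F})$ but its Lebesgue measure is bounded away from zero, which is what the integral bound needs. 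Your approach buys a cleaner composite Jacobian and about half the clock rings, at the cost of needing the surplus-positivity invariant $u_i>0$ for all $i$; the paper's drain step sidesteps that invariant at the cost of more events to control.
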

\begin{proof}
For $\mathbf{E}=\{e_1,\ldots, e_N\} \in K$ and $\mathrm{d}\mathbf{E}=\{ ( \mathrm{d}e_{1}, \cdots, \mathrm{d}e_{N}), de_i >0,
i=1 \sim N\}$. Assume $\mathrm{d}e_{i}$s are sufficiently small. Let 
$$
B(\mathbf{E},\mathrm{d}\mathbf{E}) = \{ (x_{1}, \cdots, x_{N}) \in \mathbb{R}^{N} \, | \, e_i \leq x_i \leq
e_i+\mathrm{d}e_i \})
$$ 
denote the hypercube in $\mathbb{R}^{N}$. It then suffices to prove
that for any $\mathbf{E}_{0} = \{ \bar{e}_{1}, \cdots, \bar{e}_{N}
\} \in K$, we have
\begin{equation}
\label{eqn:analytic}
P(\mathbf{E}_{0}, B(\mathbf{E},d\mathbf{E})) > \sigma \mathrm{d}e_{1}\mathrm{d}e_{2}\cdots
\mathrm{d}e_{N} \,,
\end{equation}
where $\sigma$ is a strictly positive constant that is independent of
$\mathbf{E}$ and $\mathbf{E}_{0}$.

We will then construct a sequence of events to go from the state
$\mathbf{E}_{0}$ to $B(\mathbf{E}, \mathrm{d}\mathbf{E})$ with desired positive
probability. Let $\delta = \frac{h}{2N+1}$ and let $\epsilon>0$ be
sufficiently small such that $\epsilon < \min \{c_i,i=1 \sim N\}$. Let
$H=\sum_{i=1} ^N e_i+\mathrm{d}e_i$. We consider the events $S_1 \cdots, S_N$
and $F_1, \ldots,F_{N+1}$, where $S_i$ and $F_j$ specifies what
happens on the time interval $(i\delta, (i+1)\delta]$ and
$( N\delta + (j-1)\delta, N \delta + j\delta]$, respectively.

\begin{itemize}
  \item $S_{i} =$ $\{e_i(i\delta) \in [\epsilon/2,\epsilon]\}$ and  \{
    the
    $i$-th clock rings exactly once, all other clocks are silent on
    $((i-1)\delta, i\delta]$ \}. 
    \item $F_{1} = $ {Energy emitted by right heat bath $\in (H, 2H)$
      } and { the
    $N$-th clock rings exactly once, all other clocks are silent on $(
    N\delta, (N+1) \delta]$ }.
\item $F_{j} = $ $\{e_j(N\delta + j \delta) \in [e_{N+2 - j},
  e_{N+2-j}+\mathrm{d}e_j]\}$ and $\{$ the
    $(N+1-j)$-th clock rings exactly once, all other clocks are silent on $(
    N\delta + (j-1) \delta, N\delta + j \delta]$ $\}$ for $j = 2, \cdots,
  N+1$. 
\end{itemize}

The idea is that the energy at each site is first transported to the right
heat bath, with only an amount of energy between $\epsilon/2$ and
$\epsilon$ left at each site. Then a sufficiently large amount of energy is injected into
the chain from the right heat bath so that it is always possible for
site $j$ to acquire an amount of energy between $e_j$ and $e_j+\mathrm{d}e_j$
by passing the rest to site $j-1$, where sites $0$ and $N+1$ denote
the left and right heat baths respectively.  

It is easy to show that the probability of occurence of the sequence of
events described above is always strictly positive. Here is a brief
list of considerations. We will leave detailed calculations to the reader.  
\begin{enumerate}
\item[(a)] At each clock tick, we can give a lower bound on the rate
  of the $i^{th}$ clock (i.e., $\sqrt{\text{min}\{e_i,e_{i+1}\}}$), since
  $0 < \epsilon/2 \leq e_i$ for all $i =1 \sim N$ throughout the
  entire event. 
\item[(b)] Let $p \in (0,1)$ be the fraction in the mixing that puts
  $e_i \in [\epsilon/2,\epsilon]$. From the rule of energy
  redistribution, we need $\epsilon/2 \leq p(e_i+e_{i+1}) \leq
  \epsilon$. Rearrange the terms to get $\epsilon/[2(e_i+e_{i+1})]
  \leq p \leq \epsilon/(e_i+e_{i+1})$. This is possible since
  $\epsilon \leq e_i+e_{i+1} \leq \sum_{i = 1}^{N}C_{i}$. Hence probabilities
  of $S_i$ are strictly positive. 
\item[(c)] There is also a uniform upper bound on $H$ given by $2\sum_{i=1} ^N C_i$. 
\item[(d)] Let $p \in (0,1)$ be the fraction in the mixing that puts $e_j
  \in (e_j, e_j+\mathrm{d}e_j)$. From the rule of energy redistribution, we
  have $e_j/(e_{j-1}+e_j) \leq p \leq (e_j
  +de_j)/(e_{j-1}+e_j)$. Because of $\epsilon \leq e_{j-1}+e_j \leq
  \sum_{i = 1}^{N}C_{i}$ and $\mathrm{d}e_j >0$, $\mathbb{P}[e_j(N\delta + j \delta) \in [e_{N+2 - j},
  e_{N+2-j}+\mathrm{d}e_j]] > \alpha \mathrm{d}e_{j}$ for some strictly positive
  constant $\alpha$. Hence probabilities
  of $F_j$ are greater than $\mathrm{const} \cdot \mathrm{d}e_{j}$. 
\end{enumerate}
In addition, all these probabilities are uniformly bounded from below
for all $\mathbf{E}$ and $\bar{\mathbf{E}}$ in $K$. Hence we have
$$
  \mathbb{P}[ S_{1} \cdots S_{N}F_{1} \cdots F_{N+1}] \geq \sigma
  \mathrm{d}e_{1}\cdots \mathrm{d}e_{N}
$$ 
for some constant $\sigma > 0$.
\end{proof}

As a corollary, we can prove that $\mathbf{E}_{n}$ is both aperiodic and irreducible 
with respect to the Lebesgue measure.

\begin{cor}
\label{aperiod}
$\mathbf{E}_{n}$ is a strongly aperiodic Markov chain.
\end{cor}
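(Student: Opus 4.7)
The plan is to read off both ingredients of strong aperiodicity directly from Theorem \ref{urs1}. Per the definition given right after the Nummelin splitting construction, strong aperiodicity requires $\Psi_n$ to be irreducible and to admit a uniform reference set $\mathfrak{C}$ with minorizing measure $\theta$ such that $\theta(\mathfrak{C})>0$. Fix any bounded box $\mathfrak{C} := \{\mathbf{E} : c\leq e_i\leq C,\ i=1,\ldots,N\}$ with $0<c<C$. Theorem \ref{urs1} supplies $\eta>0$ such that $P(\mathbf{E},\cdot)\geq \eta U_\mathfrak{C}(\cdot)$ for every $\mathbf{E}\in \mathfrak{C}$, so $\mathfrak{C}$ is a uniform reference set with minorizing measure $\theta=U_\mathfrak{C}$. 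Because $\theta(\mathfrak{C})=1>0$, the extra condition beyond irreducibility is trivially met, and this is the whole reason a corollary is possible.

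What remains is irreducibility. I would take the irreducibility measure $\phi$ to be Lebesgue measure on $\mathbb{R}^N_+$ (or, restricting attention to $\mathfrak{C}$, simply $U_\mathfrak{C}$). For any measurable $A$ with $\phi(A)>0$, I would enlarge $\mathfrak{C}$ if necessary so that $\phi(A\cap \mathfrak{C})>0$, and then argue accessibility in two stages: from any $\mathbf{E}_0\in \mathbb{R}^N_+$ the chain reaches $\mathfrak{C}$ in one time step with strictly positive probability, and once inside $\mathfrak{C}$ the minorization gives $P(\mathbf{E},A)\geq \eta U_\mathfrak{C}(A\cap \mathfrak{C})>0$. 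For the first stage, the event chain $S_1\cdots S_N F_1\cdots F_{N+1}$ constructed in the proof of Theorem \ref{urs1} does not actually require the starting configuration to lie in $\mathfrak{C}$; the only role of $\mathbf{E}_0\in K$ there was to produce a \emph{uniform} lower bound $\sigma$. For each fixed $\mathbf{E}_0 \in \mathbb{R}^N_+$, all the clock rates $\sqrt{\min\{\cdot,\cdot\}}$ involved are strictly positive and the mixing fraction $p$ lies in an open subinterval of $(0,1)$, so the probability of the event chain is positive (albeit $\mathbf{E}_0$-dependent). Integrating over target points in $A\cap \mathfrak{C}$ then yields $P(\mathbf{E}_0, A\cap \mathfrak{C})>0$.

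The main obstacle is a small definitional tension: the paper defines ``accessible'' by $\mathbb{P}_x[\tau_A<\infty]=1$, whereas the one-step construction above only delivers $\sum_n P^n(x,A)>0$, which is the standard Meyn--Tweedie notion of $\phi$-irreducibility. I would either reconcile this by noting that the paper's subsequent usage is the Meyn--Tweedie one (consistent with the cited Theorem 10.0.1), or upgrade positive-probability reachability to a.s.\ reachability on $\mathfrak{C}$ using $\sup_{x\in \mathfrak{C}}\mathbb{E}_x[\tau_\mathfrak{C}]<\infty$ via Theorem \ref{existence}, which is what the rest of the paper will verify anyway. Either way, combining irreducibility with the minorization on $\mathfrak{C}$ and $\theta(\mathfrak{C})=1$ delivers the strong aperiodicity of $\mathbf{E}_n$.
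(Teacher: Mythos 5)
Your proof is correct and rests on the same key ingredient as the paper's: Theorem \ref{urs1} gives the minorization $P(\mathbf{E},\cdot)\geq\eta U_K(\cdot)$ on a box $K$, and $U_K(K)=1>0$ delivers the extra condition in the definition of strong aperiodicity. The paper's own proof is just those two observations and nothing more; it silently defers irreducibility to the immediately following Corollary \ref{irreducible}, where exactly the argument you sketch (run the event chain $S_1\cdots S_N F_1\cdots F_{N+1}$ from an arbitrary $\mathbf{E}_0$ into a box containing it, then minorize) is carried out. So you have effectively folded Corollaries \ref{aperiod} and \ref{irreducible} into one proof, which is more self-contained but not a different route. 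Your observation about the definitional tension is a fair one: the paper's stated definition of ``accessible'' ($\mathbb{P}_x[\tau_A<\infty]=1$) is stronger than what Corollary \ref{irreducible} actually verifies ($\sum_n P^n(x,A)>0$), and the paper implicitly uses the latter, standard Meyn--Tweedie notion of $\phi$-irreducibility by pointing to Chapter 4 of their reference; your proposed reconciliation is the right reading.
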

\begin{proof}
By theorem \ref{urs1}, $K$ is a uniform reference set. In addition $U_{K}(K) > 0$. The strong aperiodicity follows from
its definition.
\end{proof}

Therefore $\mathbf{E}_{n}$ is aperiodic. 

\begin{cor}
\label{irreducible}
$\mathbf{E}_{n}$ is $\lambda$-irreducible, where $\lambda$ is the
Lebesgue measure on $\mathbb{R}^{N}_{+}$. 
\end{cor}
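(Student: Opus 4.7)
The plan is to reduce $\lambda$-irreducibility to a direct one-step consequence of Theorem \ref{urs1}. Recall that Theorem \ref{urs1} supplies, for any compact box $K = \{\mathbf{E} : c_i \le e_i \le C_i\}$ with $c_i > 0$, a constant $\eta = \eta(K,h) > 0$ such that
$$
P(\mathbf{E}, \cdot) \ge \eta\, U_K(\cdot) \qquad \text{for all } \mathbf{E} \in K.
$$
Since $U_K$ is equivalent to the Lebesgue measure restricted to $K$, this immediately gives $P(\mathbf{E}, B) > 0$ whenever $B \subset K$ has positive Lebesgue measure and $\mathbf{E} \in K$.

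First I would fix an arbitrary starting point $\mathbf{E}_0 \in \mathbb{R}^N_+$ and an arbitrary measurable set $A \subset \mathbb{R}^N_+$ with $\lambda(A) > 0$, and produce a single box $K$ that simultaneously contains $\mathbf{E}_0$ and sees a positive-measure piece of $A$. To do this, consider the exhausting family
$$
K_m = \{(e_1,\dots,e_N) : 1/m \le e_i \le m,\ i = 1,\dots,N\}, \qquad m \in \mathbb{N}.
$$
Because every coordinate of $\mathbf{E}_0$ is strictly positive, we have $\mathbf{E}_0 \in K_m$ for all sufficiently large $m$. Moreover, $K_m \nearrow \mathbb{R}^N_+$, so by monotone convergence $\lambda(A \cap K_m) \nearrow \lambda(A) > 0$. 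Therefore we may pick $m$ large enough that both $\mathbf{E}_0 \in K_m$ and $\lambda(A \cap K_m) > 0$ hold.

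Once such a $K = K_m$ is fixed, Theorem \ref{urs1} yields
$$
P(\mathbf{E}_0, A) \ge P(\mathbf{E}_0, A \cap K) \ge \eta\, U_K(A \cap K) = \eta\, \frac{\lambda(A \cap K)}{\lambda(K)} > 0.
$$
In particular $\sum_{n \ge 1} P^n(\mathbf{E}_0, A) > 0$, so $A$ is accessible from $\mathbf{E}_0$. Since $\mathbf{E}_0$ and $A$ were arbitrary, $\mathbf{E}_n$ is $\lambda$-irreducible.

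There is no real obstacle here; all the nontrivial work (constructing the explicit sequence of clock-ring and energy-redistribution events that yields the minorization $P(\mathbf{E},\cdot) \ge \eta U_K(\cdot)$) has already been absorbed into Theorem \ref{urs1}. The corollary amounts to the observation that such boxes exhaust $\mathbb{R}^N_+$, so irreducibility with respect to Lebesgue measure follows in one step rather than requiring an iteration of the kernel.
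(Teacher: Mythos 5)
Your proof is correct and follows essentially the same route as the paper's: both reduce $\lambda$-irreducibility to the one-step minorization of Theorem \ref{urs1} by choosing a box $K$ that contains $\mathbf{E}_0$ and meets $A$ in a set of positive Lebesgue measure, then noting $P(\mathbf{E}_0, A) \ge \eta\, U_K(A\cap K) > 0$. Your use of the exhausting family $K_m$ just makes explicit a step the paper treats loosely (the paper picks a $K$ for $A$ and a $K$ for $\mathbf{E}_0$ without explicitly noting they must be taken to be the same box), so if anything your version is slightly cleaner.
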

\begin{proof}
Let $A \subset \mathbb{R}^{N}_{+}$ be a set with strictly positive
Lebesgue measure. Then there exists a set $K$ that has the form $\{(e_{1},
\cdots, e_{N} \} \,|\, 0 < c_{i} \leq e_{i} \leq C_{i} , i = 1 \sim N
\}$ and $U_K(K \cap A) > 0$. 

For any $\mathbf{E}_{0} \in \mathbb{R}^{N}_{+}$ and the time
step $h > 0$, we can choose a $K \subset \mathbb{R}^{N}_{+}$ of the form $K = \{(e_{1},
\cdots, e_{N} \} \,|\, 0 < c_{i} \leq e_{i} \leq C_{i} , i = 1 \sim N
\}$ for some $c_{i} > 0$ and $C_{i} < \infty$, such that
$\mathbf{E}_{0} \in K$. Same construction as in Theorem \ref{urs1} implies that $P^{h}(\mathbf{E}_{0},
\cdot) > \eta U_{K}( \cdot)$ for some $\eta > 0$. Therefore, $P^{h}(
\mathbf{E}_{0}, A) > \eta U_{K}(A)  > 0$. 

\end{proof}

Hence assumption {\bf (A1)} and {\bf (A2)} are satisfied.

\subsection{Absolute continuity of invariant measure.}
This subsection aims to prove the absolute continuity of $\pi$ with
respect to the Lebesgue measure, which is denoted by $\lambda$. 

\begin{pro}
\label{abscont}
If $\pi$ is an invariant measure of $\mathbf{E}_{t}$, then $\pi$ is
absolutely continuous with respect to $\lambda$ with a strictly positive density. 
\end{pro}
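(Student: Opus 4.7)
The plan is to Lebesgue-decompose $\pi = \pi_{ac} + \pi_s$ with respect to $\lambda$ and then show $\pi_s = 0$, using as the main tool an absolutely continuous lower bound derived from the minorization in Theorem \ref{urs1}. Concretely, for any box $K = \{c_i \leq e_i \leq C_i\}$ with each $c_i > 0$, integrating the bound $\mathcal{P}(\mathbf{E}, \cdot) \geq \eta \, U_K(\cdot)$ (valid for $\mathbf{E} \in K$) against $\pi|_K$ and applying the invariance $\pi = \pi \mathcal{P}$ for the time-$h$ kernel yields $\pi(\cdot) \geq \eta \, \pi(K) \, U_K(\cdot)$. Before this is useful one needs $\pi(K) > 0$ for every such box; this follows from the $\lambda$-irreducibility of Corollary \ref{irreducible}, because $\pi(K) = 0$ would, by iteration, force $\mathcal{P}^n(\mathbf{E}, K) = 0$ for $\pi$-a.e.\ $\mathbf{E}$ and all $n$, contradicting the fact that $K$ has positive Lebesgue measure and hence is accessible from every $\mathbf{E}$.

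The crucial second step, and what I expect to be the main obstacle, is to show that $\mu \mathcal{P}$ is absolutely continuous whenever $\mu$ is. I would condition on the number $k$ of jumps in $[0, h]$. The $k = 0$ contribution has density $f(\mathbf{E}) e^{-\Lambda(\mathbf{E})h}$ (where $\Lambda$ is the total clock rate at $\mathbf{E}$) and is manifestly AC. For $k \geq 1$, the time-$h$ state is a smooth function of the initial configuration $\mathbf{E}_0$ and the jump data (times $s_j$, clock labels $i_j$, mixing parameters $p_j$); a direct Jacobian computation --- for a single jump the $2 \times 3$ submatrix in the $(e_i, e_{i+1}, p)$ block has a $2 \times 2$ minor of magnitude $e_i + e_{i+1}$ --- shows that the map is a submersion away from the rate-degenerate locus, so the pushforward of the $(N + 2k)$-dimensional AC source measure is AC on $\mathbb{R}^N_+$. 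Summing the AC pieces over $k$ gives $\mu \mathcal{P}$ AC.

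To close the argument I would write $\pi = \pi \mathcal{P} = \pi_{ac}\mathcal{P} + \pi_s \mathcal{P}$; since $\pi_{ac} \mathcal{P}$ is AC by the previous step, the singular part of $\pi_s \mathcal{P}$ must equal $\pi_s$. Because $\mathcal{P}$ is a probability kernel, $\pi_s \mathcal{P}$ and $\pi_s$ have equal total mass, forcing the AC part of $\pi_s \mathcal{P}$ to be a positive measure of mass zero, hence zero. Thus $\pi_s \mathcal{P} = \pi_s$, meaning $\pi_s$ is itself invariant; if $\pi_s$ were nonzero, renormalizing it and reapplying the AC lower bound from the first step would produce a nonzero AC component for $\pi_s$, contradicting singularity. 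Therefore $\pi_s = 0$ and $\pi = \pi_{ac}$, with density bounded below by $\eta \pi(K)/\lambda(K) > 0$ on each box $K$, hence strictly positive $\lambda$-a.e.\ on $\mathbb{R}^N_+$.
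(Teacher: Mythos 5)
Your proof is correct and reaches the same conclusion, but it organizes the contradiction differently from the paper. The paper's own argument is: decompose $\pi = \pi_{abs} + \pi_\perp$, cite Lemma~\ref{abscont2} (AC preservation, referring to \cite{li2014nonequilibrium}) to get $\pi_{abs}P^t \ll \lambda$, then use Theorem~\ref{urs1} and accessibility of $\mathfrak{C}$ to show that $P^t(\mathbf{E},\cdot)$ has a strictly positive density on $\mathfrak{C}$ from \emph{every} $\mathbf{E}$, so $\pi_\perp P^t$ has a nonzero AC piece; hence the AC part of $\pi P^t$ strictly exceeds that of $\pi$, contradicting invariance. You instead extract the cleaner structural fact that the singular part $\pi_s$ is itself invariant (via linearity of the Lebesgue decomposition plus mass conservation of $\mathcal{P}$), and then apply the minorization $\pi_s \geq \eta\,\pi_s(K)\,U_K$ directly to $\pi_s$ to contradict singularity. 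Both routes hinge on the same two ingredients --- the uniform minorization of Theorem~\ref{urs1} together with $\lambda$-irreducibility, and the lemma that $\mu \ll \lambda \Rightarrow \mu\mathcal{P} \ll \lambda$. For the latter you sketch a conditioning-on-jumps Jacobian/submersion argument where the paper defers to \cite{li2014nonequilibrium}; the sketch is morally correct (the $2\times 2$ minor of magnitude $e_i+e_{i+1}$ is right, and for $k\ge 1$ jumps the map from the AC source $(\mathbf{E}_0, p_1,\dots,p_k)$ to the final state is a submersion a.e.), though making it fully rigorous takes the same amount of work as the cited lemma. Your version also has the small advantage of explicitly delivering the strict positivity of the density (via $\rho \ge \eta\,\pi(K)/\lambda(K)$ on each box $K$), which the paper leaves implicit. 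One minor point worth flagging in a write-up: the accessibility statement $\mathcal{P}^n(\mathbf{E},K)>0$ for some $n$ holds for \emph{every} $\mathbf{E}$, while your iteration gives $\mathcal{P}^n(\mathbf{E},K)=0$ for all $n$ only for $\pi$-a.e.\ $\mathbf{E}$; the intersection over $n$ of $\pi$-full sets is still $\pi$-full, so the contradiction goes through provided $\pi\neq 0$, which is assumed.
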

For
$\mathbf{E} \in \mathbb{R}^{N}_{+}$ and $t > 0$, we have decomposition 
$$
  P^{t}( \mathbf{E}, \cdot) = \nu_{\perp} + \nu_{abs} \,,
$$
where $\nu_{abs}$ and $\nu_{\perp}$ are absolutely continuous and
singular component with respect to $\lambda$,
respectively. We need to show that an absolutely continuous component
cannot revert back to singularity as time evolves. 

\begin{lem}
\label{abscont2}
For any probability measure $\mu \ll \lambda$, $\mu P^{t} \ll \lambda$
for any $t  >0$.  
\end{lem}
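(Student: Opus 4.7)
The plan is to decompose $\mu P^t$ according to the number of jumps $N_t$ the process makes on $[0,t]$. Since $\mathbf{E}_t$ is a pure-jump process, one may write $\mu P^t = \sum_{k=0}^{\infty} \mu^{(k)}_t$, where $\mu^{(k)}_t(A) := \mathbb{P}_\mu[\mathbf{E}_t \in A, \, N_t = k]$. Each $\mu^{(k)}_t$ is a sub-probability measure, and it suffices to show that every $\mu^{(k)}_t$ is absolutely continuous with respect to $\lambda$: the corresponding densities $g_k$ are nonnegative with $\sum_k \int g_k \, d\lambda = \sum_k \mu^{(k)}_t(\mathbb{R}^N_+) = 1$, so $\sum_k g_k$ converges in $L^1$ and gives a density for $\mu P^t$.

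The technical heart of the argument is the following single-jump claim: for any absolutely continuous (sub-)probability measure $\nu$ on $\mathbb{R}^N_+$ and any clock $i \in \{0, 1, \dots, N+1\}$, the pushforward $\nu K_i$ of $\nu$ under the $i$-th single-jump kernel is again absolutely continuous. For an interior clock $i$, I would consider the map $\Phi_i : \mathbb{R}^N_+ \times (0,1) \to \mathbb{R}^N_+$ defined by
$$\Phi_i(x, p) = \bigl(x_1, \dots, x_{i-1}, \, p(x_i + x_{i+1}), \, (1-p)(x_i + x_{i+1}), \, x_{i+2}, \dots, x_N\bigr).$$
A direct computation shows that $\Phi_i$ is a smooth submersion: the $2\times 3$ block of the Jacobian mapping $(x_i, x_{i+1}, p)$ to $(y_i, y_{i+1})$ contains a $2\times 2$ minor with determinant $\pm(x_i + x_{i+1}) \neq 0$. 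Consequently the pushforward of the absolutely continuous product measure $\nu \otimes \mathrm{Leb}|_{(0,1)}$ under $\Phi_i$ is absolutely continuous on $\mathbb{R}^N_+$, with density given by the explicit fiber integral
$$\widetilde{\rho}(y) = \int_0^{y_i + y_{i+1}} \rho\bigl(y_1, \dots, y_{i-1}, \, q, \, y_i + y_{i+1} - q, \, y_{i+2}, \dots, y_N\bigr) \, \frac{dq}{y_i + y_{i+1}},$$
where $\rho$ denotes the density of $\nu$. For boundary clocks the analogous computation gains an additional degree of randomness from the heat-bath exponential, so the conclusion is only easier.

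With the single-jump claim in hand, absolute continuity of $\mu^{(k)}_t$ would follow by induction on $k$. The base case $k=0$ is immediate: $\mu^{(0)}_t(A) = \int_A \rho(x) e^{-r(x) t} \, dx$, where $r(x)$ is the total jump rate at $x$ and $\rho$ the density of $\mu$. For the inductive step, I condition on the first jump time $\tau_1 = \tau \in (0, t]$ and the clock label $J_1 = i$ via the strong Markov property, obtaining
$$\mu^{(k+1)}_t(A) = \sum_i \int_0^t g_k\bigl(\nu_{\tau, i}, \, t - \tau\bigr)(A) \, d\tau, \qquad \nu_{\tau, i}(dy) := \int \rho(x) \, r_i(x) \, e^{-r(x) \tau} \, K_i(x, dy) \, dx,$$
where $r_i(x)$ is the rate of clock $i$ and $g_k(\sigma, s)$ denotes the distribution of $\mathbf{E}_s$ restricted to $\{N_s = k\}$ when the initial (sub-probability) distribution is $\sigma$. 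Each $\nu_{\tau,i}$ is absolutely continuous by the single-jump claim applied to $\rho(x) r_i(x) e^{-r(x)\tau} \, dx$, and the inductive hypothesis then yields $g_k(\nu_{\tau,i}, \cdot) \ll \lambda$; Fubini ensures the iterated density is in $L^1$.

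The main obstacle is the single-jump claim. A deterministic exchange with fixed $p$ maps $\mathbb{R}^N_+$ into a hyperplane $\{y_i + y_{i+1} = \mathrm{const}\}$ and so destroys absolute continuity starting from a point mass. The essential ingredient is thus the independent $U(0,1)$ randomness in $p$: once $p$ is adjoined as an extra source coordinate, the map becomes a smooth submersion onto $\mathbb{R}^N_+$ and absolute continuity propagates. Everything else is bookkeeping with the pure-jump decomposition together with conservation of total probability for the $L^1$ convergence of the summed densities.
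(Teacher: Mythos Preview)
Your proposal is correct and takes essentially the same approach as the paper: the paper states that ``all what we need to prove is that, for any absolutely continuous initial distribution, the push-forward measure corresponding to one clock ringing is still absolutely continuous,'' and defers the calculations to Lemma~6.3 of \cite{li2014nonequilibrium}. Your single-jump claim is exactly this statement, and your submersion argument for $\Phi_i$ together with the pure-jump decomposition and induction on the number of jumps supplies the details the paper omits.
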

\begin{proof}
This proof is essentially identical to that of Lemma 6.3 of
\cite{li2014nonequilibrium}. All what we need to prove is that, for any
absolutely continuous initial distribution, the push-forward measure
corresponding to one clock ringing is still absolutely continuous. We
refer readers to \cite{li2014nonequilibrium} for the detailed
calculations. 
\end{proof}

\begin{proof}[{\bf Proof of Proposition \ref{abscont}. }] Let $\pi =
  \pi_{abs} + \pi_{perp}$ be
  an invariant measure. Assume $\pi_{perp} \neq 0$. For $t > 0$,
  $\pi_{abs}P^{t} \ll \lambda$ by Lemma \ref{abscont2}. By Theorem
  \ref{urs1}, for any $\mathbf{E} \in \mathfrak{C}$, 
$P^{t/2}( \mathbf{E}, \cdot)$ has a strictly positive density on
$\mathfrak{C}$. Since $\mathfrak{C}$ is accessible within finitely many
energy exchanges, $P^{t/2}( \mathbf{E}, \mathfrak{C} ) > 0$ for all $\mathbf{E}
\in \mathbb{R}^{N}_{+}$. Hence $P^{t}( \mathbf{E}, \cdot)$ has a strictly positive
density on $\mathfrak{C}$ for all $\mathbf{E} \in \mathbb{R}^{N}_{+}$. Therefore,
$\pi_{\perp}P^{t}$ has an absolutely continuous component. The absolutely continuous component of $\pi P^{t}$ is strictly larger
than that of $\pi$. This contradicts with the invariance of $\pi$. 

\end{proof}

\subsection{Verifying {\bf (N1)} and {\bf (N2)}.}

Now we are ready to present our numerical results. We let $N = 3$ in
our simulations. The uniform
reference set $\mathfrak{C}$ is chosen as 
$$
  \mathfrak{C} = \{ (e_{1}, \cdots, e_{N}) \,|\, 0.1 \leq e_{i} \leq
  100 , i = 1 \sim N\} \,.
$$

Throughout our numerical justification, we let $h = 0.1$. (Recall that
for a time-continuous Markov process $\Psi_{t}$, the definition of
$\tau_{\mathfrak{C}} = \tau_{\mathfrak{C}}(h)$ depends on $h$.) We will
verify {\bf (N1)} for the numerical invariant measure, which is 
generated by running the process for a sufficiently long time from a
suitable initial distribution. In our simulation, the initial
distribution for the simulation of the numerical invariant measure is
$\mu_{0} \sim (\rho_{1} , \cdots, \rho_{N} )$, where $\rho_{i}$ is an exponential
distribution with mean $(T_{L } + T_{R})/2$. We find that when $T =
200$, expectations of many observables we have tested are
stabilized (Figure \ref{kmpsstest} ). Therefore, the numerical invariant measure is chosen as
$\hat{\pi} := \mu_{0} P^{200}$. Then from the result of our simulation that chooses
the numerical invariant measure as the specific starting state, we conclude that
$\mathbb{P}_{\hat{\pi}}[\tau_{\mathfrak{C}} > t] \sim t^{-2}$. 
(Figure \ref{KMP_ss} ).

\begin{figure}[htbp]
\centerline{\includegraphics[width = \textwidth]{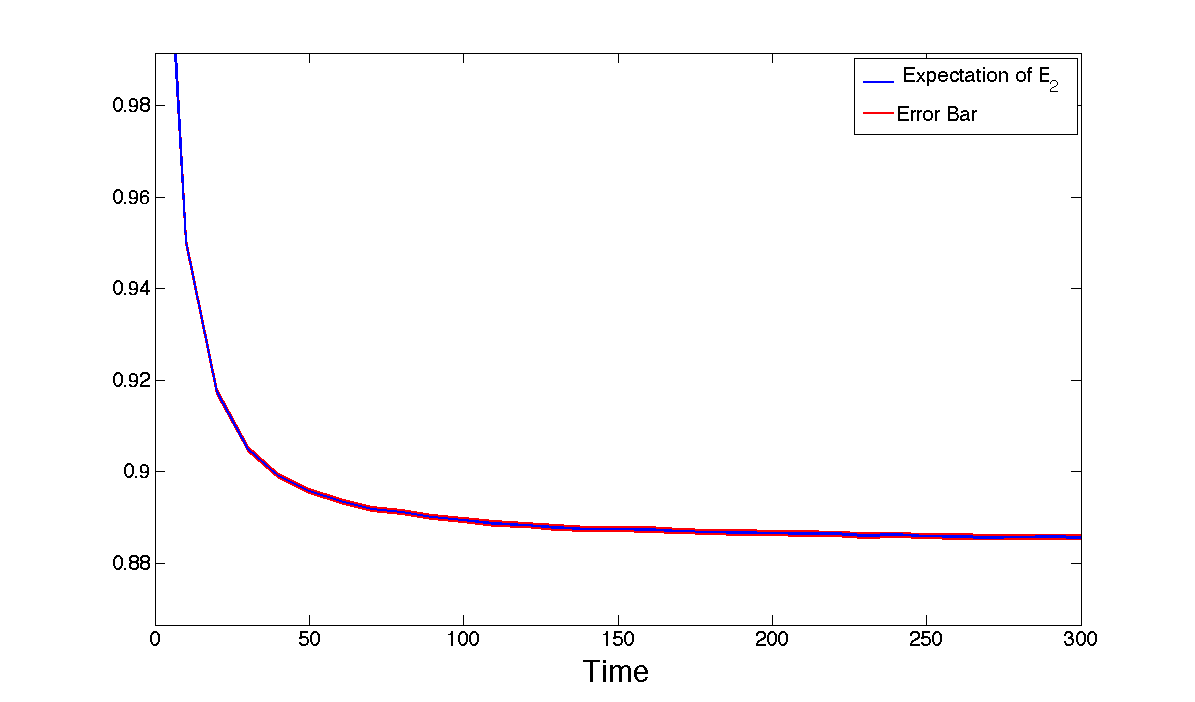}}
\caption{Expectation of energy at site $2$ verses
  time. Sample size  $M = 1 \times 10^{8}$. The error bar represents
  $\pm 1.96 M^{-1/2}$ times the sample standard deviation. }
\label{kmpsstest} 
\end{figure}

It remains to check {\bf (N2)}. The transition kernel of the time-$h$
sample chain of $\mathbf{E}_{t}$ does not have a clean explicit
form. In addition, the transition kernel of $\mathbf{E}_{t}$ has many
singularities. As a result, proving the condition in Proposition \ref{dominate} is an
extremely tedious work. Instead, we choose to numerically show that 
$$
  \gamma (\mathbf{E}) = \sup_{t \geq h}\frac{\mathbb{P}_{\mathbf{E}}[\tau_{\mathfrak{C}}
    > t]}{t^{-2}}
$$
is uniformly bounded on $\mathfrak{C}$. We follow procedure (a)-(d) in
Section 3 to show the boundedness of $\gamma (\mathbf{E})$. In fact, 
$$
  \gamma_{N}(\mathbf{E}) = \sup_{1 \leq n \leq N} \sup_{t \geq h}\frac{\mathbb{P}_{\mathbf{E}}[\tau_{\mathfrak{C}}
    > t]}{t^{-2}} 
$$
is stabilized very fast with increasing $N$. A sample of size $10^6$ is sufficient for a reliable
estimate of $\gamma (\mathbf{E})$. Figure \ref{KMP_search} shows that
when $E_{i}$ is small, $\gamma (\mathbf{E})$ decreases monotonically
with decreasing $E_{i}$ for each $i = 1 \sim 3$. Therefore, we expect that the maximal of $\gamma(\mathbf{E})$ in $\mathfrak{C}$
is reached at $\mathbf{E}_{*} = (0.1, 0.1, 0.1)$. In fact, intuitively one should expect
$\gamma(\mathbf{E})$ to decrease with site energy as starting from low site energy
means having higher probability to have even lower site energy after an 
energy exchange. Finally, we run the simulation again to estimate
$\gamma( \mathbf{E}_{*})$. As seen in Figure \ref{KMP_max}, when
starting from $\mathbf{E}_{*}$ the probability of return has tail
$\sim t^{-2}$. ($\gamma( \mathbf{E}_{*})$ is approximately $48.8915$ with standard deviation $0.0225$.)

\begin{figure}[h!]
\centering{\includegraphics[width = 0.6\paperwidth]{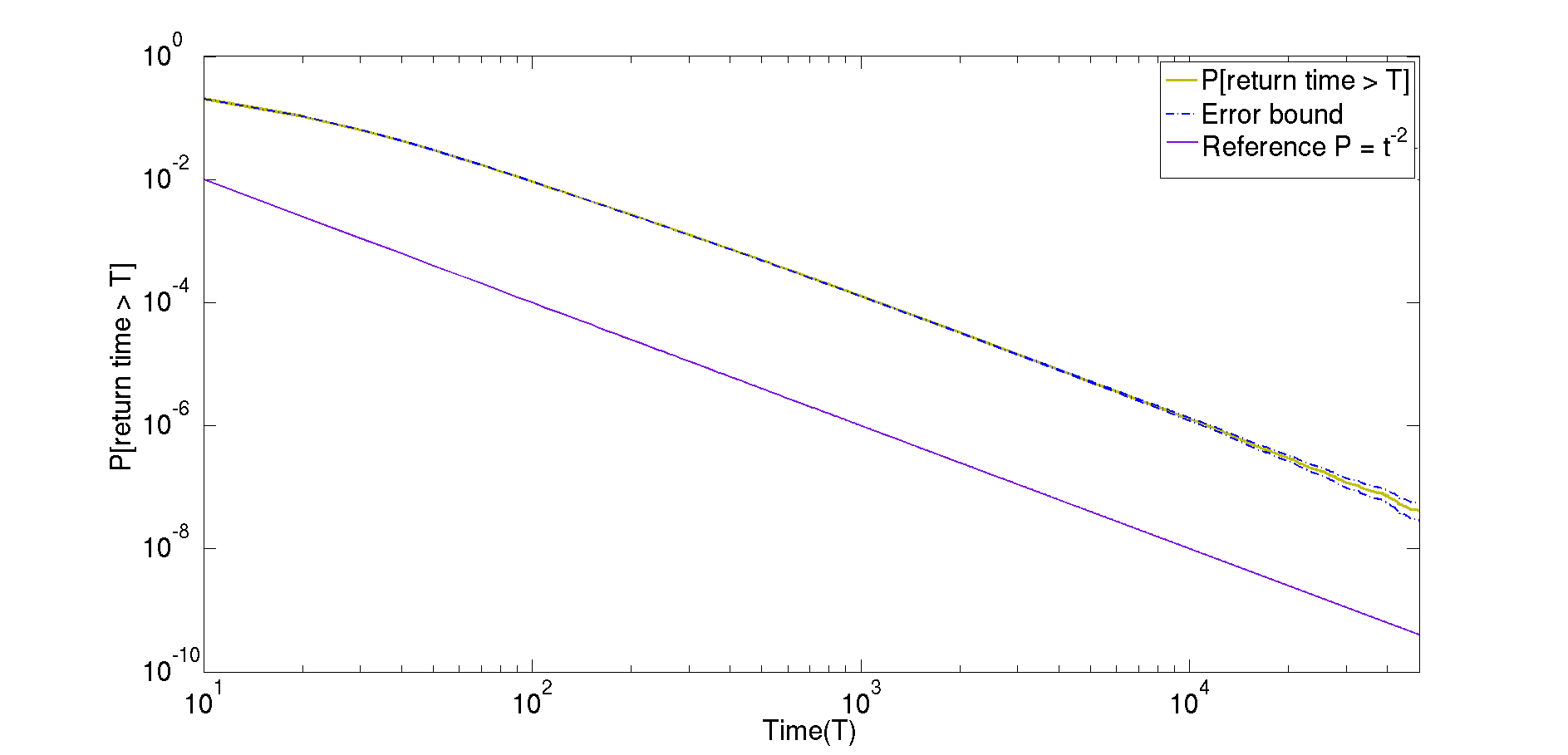}}

\caption{$\mathbb{P}_{\hat{\pi}} [\tau_{\mathfrak{C}}>t]$, $\pi$ is the
  numerical steady state obtained from long time averaging. Sample
  size $ = 1 \times 10^{8}$. The purple line is a reference line with slope
$-2$. The error bar is calculated as in Remark 3.5.}
\label{KMP_ss}
\end{figure}
\vfill
\begin{figure}[h!]
\begin{minipage}{0.5\textwidth}
\centering{\includegraphics[width = 0.38\paperwidth]{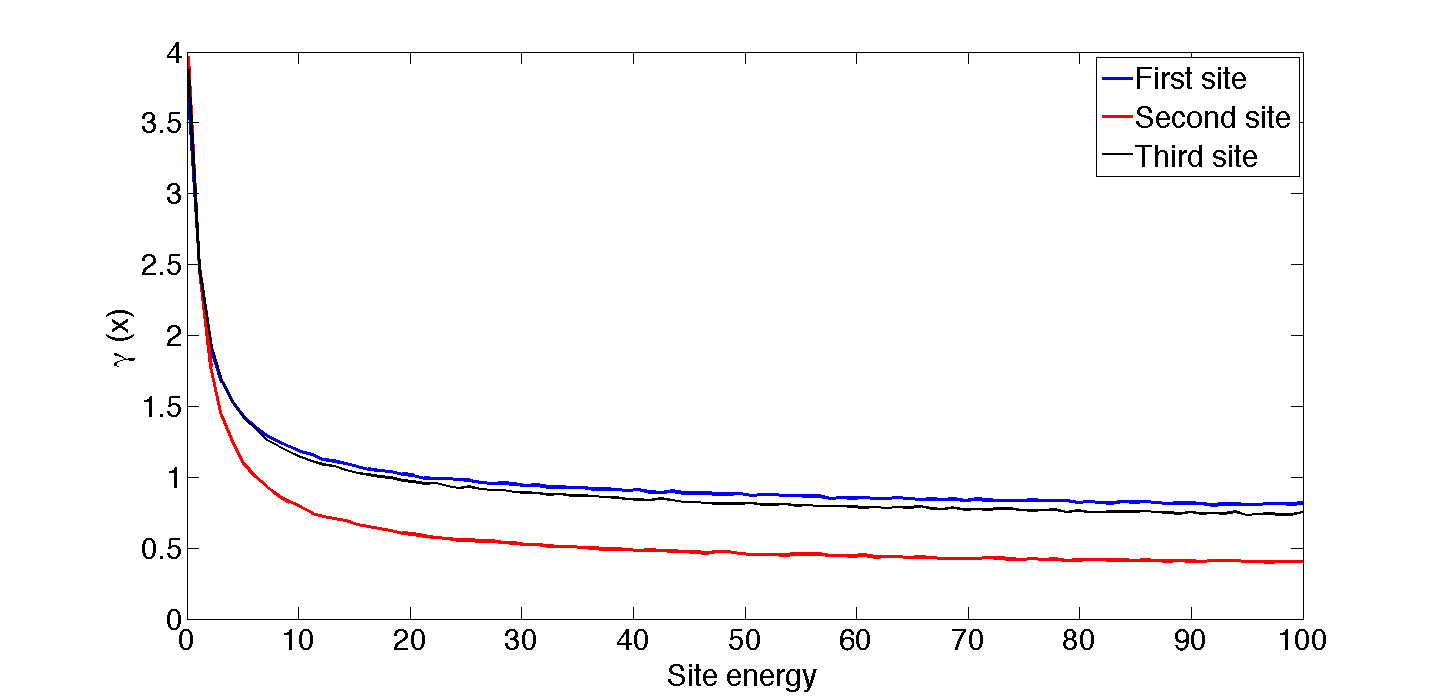}}

\caption{Change of $\gamma (\mathbf{E})$ for
  varying $\mathbf{E}$ when only one site energy changes. The
  unchanged site energy is set to be $1$. Sample size $=
  10 \times 1 \times 10^{7}$ for each initial condition.}
\label{KMP_search}
\end{minipage}\hfill
\begin{minipage}{0.5\textwidth}
\centering{\includegraphics[width = 0.38\paperwidth]{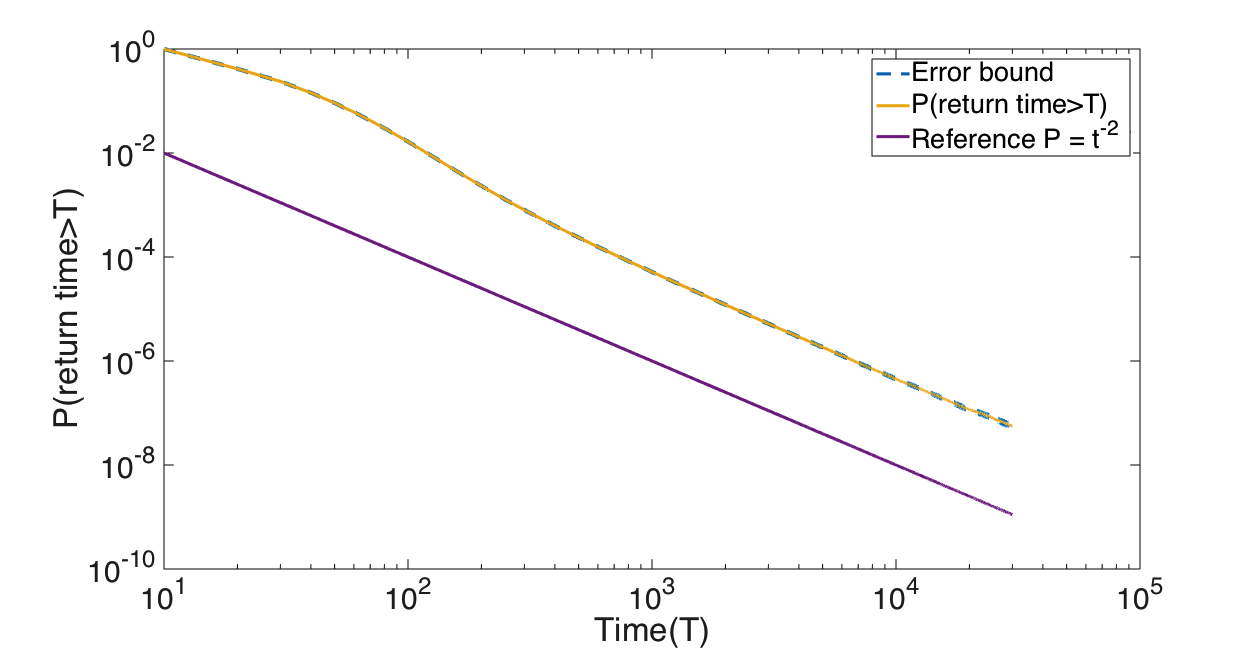}}
\caption{$\mathbb{P}_{\mathbf{E}_{*}}[\tau_{\mathfrak{C}}>t]$ for
  $\mathbf{E}_{*}=(0.1,0.1,0.1)$. Sample size $ = 1 \times 10^{10}$. The purple line is a reference line with slope
$-2$. The error bar is calculated as in Remark 3.5. }
\label{KMP_max}
\end{minipage}\hfill
\end{figure}

\subsection{Main conclusions.} 

The previous subsection verifies both {\bf(N1)} and {\bf(N2)} for $\bf{E}_t$ with parameter 2. The slope of $\mathbb{P}_{\hat{\pi}}[
 \tau_{\mathfrak{C}} > t]$ in the log-log plot is $2$. Note
 that $\mathbb{P}[\Psi_{t} = \Psi_{0} \,|\, \Psi_{0} \in K] =
 \mathbb{P}_{\Psi_{0}}[ \mbox{ no clock rings up to } t]$ is
 uniformly positive for each given $t > 0$. By Theorem \ref{c2d}, {\bf (N1)} and {\bf
  (N2)} hold for $\mathbf{E}_{n}$ with parameter $2 -
\epsilon/2$ for arbitrarily small $\epsilon > 0$. Therefore,
conclusions (a)-(d) in Section 4.4 hold for $\mathbf{E}_{n}$. 
 
It remains to pass the results for $\mathbf{E}_{n}$ to
$\mathbf{E}_{t}$. By Proposition \ref{c0}, it is sufficient to prove
``continuity at zero'' for $\mathbf{E}_{t}$. 

\begin{lem}
\label{kmpcont0}
For any probability measure $\mu$ on $\mathbb{R}^{N}_{+}$, 
$$
  \lim_{\delta \rightarrow 0} \| \mu P^{\delta} - \mu \|_{TV} = 0
$$
\end{lem}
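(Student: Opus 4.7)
The plan is to exploit the jump-process structure: at any fixed state $\mathbf{E}\in\mathbb{R}^{N}_{+}$ the total rate
$$
R(\mathbf{E}) \;=\; \sum_{i=0}^{N} R_i(\mathbf{E}) \;\leq\; \sqrt{T_L}+\sqrt{T_R}+\sum_{i=1}^{N-1}\sqrt{e_i\wedge e_{i+1}}
$$
is finite, so the probability that no clock rings during $[0,\delta]$ is exactly $e^{-R(\mathbf{E})\delta}$. On the complementary event the process jumps, but the contribution to the total variation distance is bounded by $1$. This should let me control $\|P^{\delta}(\mathbf{E},\cdot)-\delta_{\mathbf{E}}\|_{TV}$ pointwise and then integrate against $\mu$.

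More precisely, I would first write the pointwise decomposition
$$
P^{\delta}(\mathbf{E},\cdot) \;=\; e^{-R(\mathbf{E})\delta}\,\delta_{\mathbf{E}} \;+\; \bigl(1-e^{-R(\mathbf{E})\delta}\bigr)\,\nu_{\mathbf{E},\delta},
$$
where $\nu_{\mathbf{E},\delta}$ is the conditional distribution of $\mathbf{E}_{\delta}$ given that at least one clock has rung. This immediately yields
$$
\|P^{\delta}(\mathbf{E},\cdot)-\delta_{\mathbf{E}}\|_{TV} \;\leq\; 2\bigl(1-e^{-R(\mathbf{E})\delta}\bigr).
$$

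Next, using the standard inequality $\|\mu P^{\delta}-\mu\|_{TV} \leq \int \|P^{\delta}(\mathbf{E},\cdot)-\delta_{\mathbf{E}}\|_{TV}\,\mu(d\mathbf{E})$ (obtained by taking the sup over measurable $A$ inside the outer integral defining $\mu P^{\delta}(A)-\mu(A)$), I get
$$
\|\mu P^{\delta}-\mu\|_{TV} \;\leq\; 2\int_{\mathbb{R}^{N}_{+}} \bigl(1-e^{-R(\mathbf{E})\delta}\bigr)\,\mu(d\mathbf{E}).
$$
For each fixed $\mathbf{E}\in\mathbb{R}^{N}_{+}$ the integrand tends to $0$ as $\delta\to 0$ and is bounded by the constant $1$, so dominated convergence finishes the proof.

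The argument is essentially a one-liner once the decomposition is in place; there is no real obstacle. The only thing to double-check is that $R(\mathbf{E})<\infty$ $\mu$-almost surely, which is automatic since the rates are locally bounded square roots of finite site energies, so the set $\{R(\mathbf{E})=\infty\}$ is empty in $\mathbb{R}^{N}_{+}$. No compactness assumption on $\mu$ is required, because dominated convergence handles even the case where $R$ is unbounded in the support of $\mu$.
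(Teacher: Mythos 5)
Your argument is correct, and the underlying idea is the same as the paper's (for small $\delta$, the process stays put with high probability), but the route is genuinely cleaner. You use the pointwise decomposition $P^{\delta}(\mathbf{E},\cdot) = e^{-R(\mathbf{E})\delta}\delta_{\mathbf{E}} + (1-e^{-R(\mathbf{E})\delta})\nu_{\mathbf{E},\delta}$, pull the total-variation distance under the integral against $\mu$, and then finish with dominated convergence, relying only on $R(\mathbf{E})<\infty$ pointwise. The paper instead argues by hand: it truncates to a bounded set $A$ with $\mu(A)>1-\epsilon/4$, on which the clock rates are uniformly bounded, chooses $\delta$ so that the no-jump probability is uniformly at least $1-\epsilon/4$ on $A$, and then does an $\epsilon/4$-bookkeeping over $A\cap U$, $U\setminus A$ and $A^c$. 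Your approach buys brevity and generality — it would transfer verbatim to any pure-jump Markov process whose total jump rate is everywhere finite — while the paper's truncation argument is more explicit and does not require naming the conditional jump distribution $\nu_{\mathbf{E},\delta}$. One cosmetic point: the factor $2$ in $\|P^{\delta}(\mathbf{E},\cdot)-\delta_{\mathbf{E}}\|_{TV}\leq 2(1-e^{-R(\mathbf{E})\delta})$ depends on whether you take the $L^1$ or the sup-over-sets normalization of total variation; the paper's statement uses the latter, so the constant should be $1$ there — but this has no bearing on the limit.
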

\begin{proof}
It is sufficient to prove that for any $\epsilon > 0$, there exists a
$\delta > 0$ such that
$$
  \| \mu P^{\delta} - \mu \|_{TV} \leq \epsilon \,.
$$
Since $\mu$ is finite, there exists a bounded set $A\subset
\mathbb{R}^{N}_{+} := \{ 0 \leq E_{i} \leq M \}$ such that $\mu(A) > 1
- \epsilon/4$. By the definition of $A$, clock rates for initial values in $A$ are uniformly
bounded. Therefore, one can find a sufficiently small $\delta > 0$,
such that $\mathbb{P}[ \mbox{ no clock rings on } [0, \delta) ] \geq 1
- \epsilon/4$. For any set $U \subset \mathbb{R}^{N}_{+}$, we have
\begin{eqnarray*}
(\mu P^{\delta}) (U)  & =  & \int_{\mathbb{R}^{N}_{+}}
P^{\delta}(\mathbf{E}, U) \mu( \mathrm{d} \mathbf{E})\\
&=& \int_{A \cap U} P^{\delta}(\mathbf{E}, U) \mu( \mathrm{d}
\mathbf{E}) + \int_{A - U} P^{\delta}(\mathbf{E}, U) \mu( \mathrm{d}
\mathbf{E}) + \int_{A^{c}} P^{\delta}(\mathbf{E}, U) \mu( \mathrm{d}
\mathbf{E}) \\
&=& \mu(A \cap U) - a_{1} + a_{2} + a_{3} \,,
\end{eqnarray*}
where
\begin{eqnarray*}
a_{1} & =  & \int_{A \cap U} (1 - P^{\delta}(\mathbf{E}, U) )\mu(
\mathrm{d} \mathbf{E}) \leq \frac{\epsilon}{4} \mu( A \cap U) \leq \frac{\epsilon}{4}\\
a_{2} &=& \int_{U - A} P^{\delta}(\mathbf{E}, U) \mu( \mathrm{d}
\mathbf{E}) \leq \frac{\epsilon}{4} \mu( A - U) \leq \frac{\epsilon}{4}\\
a_{3} &=& \int_{A^{c}} \frac{\epsilon}{4} \mu( A \cap U) \leq
\frac{\epsilon}{4} \mu( A^{c}) \leq \frac{\epsilon}{4} \,. 
\end{eqnarray*}
In addition we have $\mu(U) - \mu(A \cap U) \leq \mu(A^{c}) <
\frac{\epsilon}{4}$. Hence
$$
  | (\mu P^{\delta})(U) - \mu(U) | < \epsilon
$$
for any $U \subset \mathbb{R}^{N}_{+}$. By the definition of the total
variation norm, we have
$$
  \| \mu P^{\delta} - \mu \| \leq \epsilon \,.
$$

This completes the proof. 

\end{proof}

We haven't talked about uniqueness so far. Usually the uniqueness of
the invariant probability measure
follows from the fact that $P(x, \cdot)$ admits positive density
everywhere. 

\begin{pro}
For any $h > 0$, $\mathbf{E}^{h}_{n}$ admits at most one invariant
probability measure.
\end{pro}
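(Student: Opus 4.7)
The plan is to combine the $\lambda$-irreducibility from Corollary \ref{irreducible} with the strict positivity of invariant densities established in Proposition \ref{abscont} to exclude the coexistence of two mutually singular invariant probability measures.

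The first step is to observe that Proposition \ref{abscont} transfers verbatim from $\mathbf{E}_t$ to the sample chain $\mathbf{E}^{h}_n$. Its proof uses only (i) preservation of absolute continuity under one application of the transition kernel (Lemma \ref{abscont2}), (ii) the minorization $P(\mathbf{E},\cdot) \geq \eta U_K$ on bounded boxes $K$ (Theorem \ref{urs1}), and (iii) accessibility of such a box within finitely many steps. All three ingredients hold equally for $P = P^{h}$, so any $P$-invariant probability measure is absolutely continuous with respect to Lebesgue measure on $\mathbb{R}^{N}_+$ with a strictly positive density.

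Next, suppose for contradiction that $\pi_1 \neq \pi_2$ are two such invariant probability measures. Since $\mathbb{R}^{N}_+$ is Polish, the ergodic decomposition for the Markov operator $P$ produces two distinct ergodic invariant probability measures $\mu_1, \mu_2$, necessarily mutually singular (concentrated on disjoint $P$-invariant sets). But by the first step each $\mu_i$ has a strictly positive Lebesgue density, so both are equivalent to Lebesgue measure on $\mathbb{R}^{N}_+$, hence equivalent to each other, contradicting their mutual singularity. Therefore at most one $P$-invariant probability measure can exist.

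The only real obstacle is the bookkeeping in the first step, namely confirming that Theorem \ref{urs1} and Lemma \ref{abscont2} depend on $t$ only through the fact that at least one clock tick has been taken; once this is verified, uniqueness is immediate. An equivalent, slightly more hands-on closure avoiding ergodic decomposition is to note that any $P$-invariant set $A$ satisfies $\lambda(A)\lambda(A^c) = 0$: if both factors were positive, $\lambda$-irreducibility from points of $A$ would force transition into $A^c$ with positive probability, contradicting $P(x,A) = 1$ on $A$; hence the only $P$-invariant sets are Lebesgue-trivial, and this again forces uniqueness when combined with the strict positivity of invariant densities.
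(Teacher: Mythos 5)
Your argument is correct, and your first step (that Proposition \ref{abscont} carries over verbatim from $\mathbf{E}_t$ to the fixed-$h$ sample chain, since its proof only applies the kernel at the single time $t=h$ through Lemma \ref{abscont2} and Theorem \ref{urs1}) is right and worth making explicit, since the paper states that proposition only for the continuous-time process. However, you take a genuinely different route from the paper. The paper does not pass through Proposition \ref{abscont} or ergodic decomposition at all: it notes that the construction in Theorem \ref{urs1} gives $P^{h/2}(\mathbf{E},\cdot)$ a strictly positive density on $K$ for $\mathbf{E}\in K$, that $P^{h/2}(\mathbf{E}_0,K)>0$ for every $\mathbf{E}_0\in\mathbb{R}^N_+$ by accessibility, hence $P^{h}(\mathbf{E}_0,\cdot)$ has a strictly positive density on $K$ for every starting point, and concludes directly that all initial states lie in a single ergodic component. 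That is shorter and self-contained. Your argument -- transfer the positive-density result for invariant measures, then observe that two distinct invariant measures would produce two mutually singular ergodic measures, both nevertheless equivalent to Lebesgue -- is structurally cleaner and appeals to the standard ergodic decomposition for standard Borel state spaces, but it imports more machinery. Your closing alternative (invariant sets are Lebesgue-trivial by $\lambda$-irreducibility, so no two mutually singular absolutely continuous invariant measures can coexist) is closer in spirit to the paper's one-ergodic-component argument and is the cleaner of your two variants; it is essentially the special case of the Meyn--Tweedie fact that a $\psi$-irreducible chain has at most one invariant probability measure.
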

\begin{proof}
By the proof of Theorem \ref{urs1}, for any $\mathbf{E} \in K$,
$P^{h/2}( \mathbf{E}, \cdot)$ has strictly positive density on $K$. In
addition, $P^{h/2}(\mathbf{E}_{0}, K) > 0$ for any $\mathbf{E}_{0} \in
\mathbb{R}^{N}_{+}$. Hence $P^{h}(\mathbf{E}_{0}, \cdot)$ has positive
density on $K$. This implies that every $\mathbf{E}_{0} \in
\mathbb{R}^{N}_{+}$ belongs to the same ergodic component. Therefore
$\mathbf{E}^{h}_{n} $ cannot have more than one invariant probability measure.
\end{proof}

\bigskip 

In summary, we have the following conclusions for $\mathbf{E}_{t}$.

\begin{enumerate}
  \item For any $T_{L}$, T$_{R}$, there exists a unique invariant probability measure $\pi$,
    i.e., the nonequilibrium steady-state, which is absolutely
    continuous with respect to the Lebesgue measure on
    $\mathbb{R}^{N}_{+}$. 
\item For almost every $\mathbf{E}_{0} \in \mathbb{R}^{N}_{+}$ and any
  sufficiently small $\epsilon > 0$, we have
$$
  \lim_{t\rightarrow \infty}t^{2 - \epsilon} \|
  \delta_{\mathbf{E}_{0}} P^{t} - \hat{\pi}P^{t} \|_{TV} = 0 \,.
$$
\item For any functions $\eta$, $\xi \in L^{\infty}(
  \mathbf{R}^{N}_{+})$, we have
$$
  C_{\pi}^{\eta, \xi}(t) \leq O(1) \cdot t^{\epsilon - 2}
$$
for any $\epsilon > 0$. 
\end{enumerate}

\section{Example: random halves model}
\subsection{Derivation from deterministic dynamics} Another way to
simplify the multi-body problem as described in Figure \ref{tube} at the beginning of the previous section is
to assume that particles do not interact directly. Instead, we divide the
tube into a chain of $N$ cells, each of which contains a rotating disk
that plays the role of the ``local environment''. As seen in Figure
\ref{rhm}, particles can only exchange energy with the rotating
disk. Then we connect this chain with two thermalized ends, called
heat baths, such that thermalized particles can be injected into the system and particles in
the system can exit by entering the heat bath. This is the Hamiltonian
model proposed in \cite{eckmann2006nonequilibrium}. We refer \cite{eckmann2006nonequilibrium, li2014nonequilibrium} for details.

\begin{figure}[htbp]
\centerline{\includegraphics[width = \linewidth]{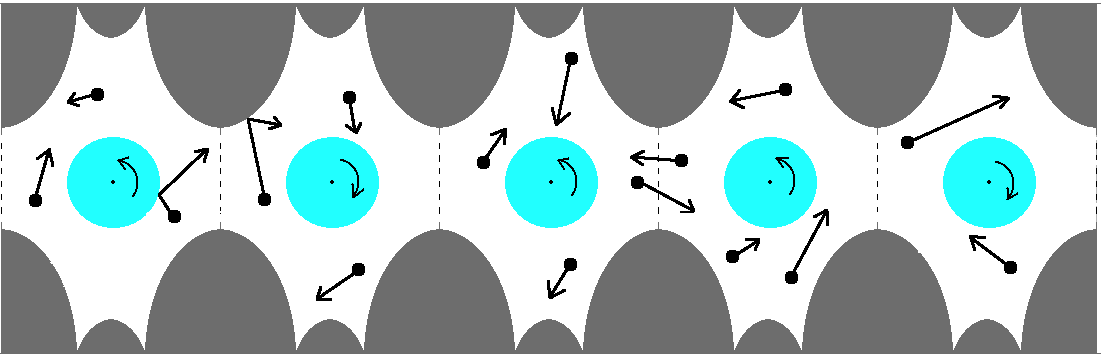}}
\caption[]{\label{rhm} Moving particles do not interact with each
  other directly. There is a rotating disk with fixed center in each cell. The collision between particles and
disks are determined by the conservation of kinetic energy and angular
momentum.}
\end{figure}

A particle in this Hamiltonian model has chaotic trajectories and
quick loss of ``memory''. Therefore, it is natural to assume that the
movement of each particle is stochastic, i.e., governed by an energy
dependent exponential clock. When a clock associated
  with a particle rings, the particle either jumps to neighboring
  cells or exchanges energy with the local environment. The
  probability of occurence of either event is a constant determined by
  the system. This reduces the Hamiltonian 
model to the so-called random halves model, which is described in the next
subsection. We refer \cite{li2014nonequilibrium} for the full detail of this model reduction process and a brief justification of the model reduction.

\subsection{Model Description}

Consider $N$ linearly ordered lattice sites$\{1,2,\cdots, N\}$, each
containing an energy tank and storing a finite number of particles
with certain amount of energy. The lattice sites are connected to two
heat baths at the ends, denoted as sites $0$ and $N+1$ for the sake of simplicity. The heat baths have temperatures $T_L$ and $T_R$, as well as exponential particle injection rates of $\rho_L$
and $\rho_R$, respectively. Particle energies are random variables with
i.i.d. distributions with a probability density function
$$\frac{2}{\sqrt{\pi}T^{3/2}}\sqrt{x}e^{-\frac{x}{T}} $$ 
where $T$ is the temperature of the heat bath from which the particle
is emitted. Notice that when a particle is emitted by the left heat
bath, it instantaneously appears at site 1. When a particle is emitted
by the right heat bath, it instantaneously appears at site N. 

An exponential clock is associated with each particle in the system,
with rate $(1+m)S\sqrt{x}$, where $x$ is the energy of the particle
and $m$ and $S$ are system constants. When the clock of a particle rings, the particle jumps with
probability $\frac{1}{1+m}$ and ``mixes'' with probability
$\frac{m}{1+m}$. Rules for jumping and ``mixing'' for a particle at site
$i$ carrying energy $x_j^i$, where $j$ is the index of the particle in
its site, are as follows.  

When a particle jumps, it goes to either site $i+1$ or site $i-1$ with
equal probability of $\frac{1}{2}$. Let $k_i$ be the number of
particles at site $i$. Then $k_i$ decreases by 1 while the site that
the particle jumps to has an increase in particle number by 1. Notice
that the particle leaves the system if it jumps to the left or right
heat bath. For ``mixing'', we mean a particle exchanges energy with the stored
energy at its site. Let $s_i$ be the stored energy at site $i$ and let
$x_j$ be the corresponding particle energy.  The rule of energy
exchange is $(s_i ', x'_j)=(x_j u^2, s_i +x_j (1-u^2))$, where u is a
uniform random variable distributed on $(0,1)$.   

Random halves model generates a Markov jump
process $$\mathbf{\omega}_t = ((s_1(t),\{x_1^1(t),\ldots,
x_{k_1}^1(t)\}), \ldots, (s_N(t),\{x_1^N(t),\ldots,
x_{k_N}^N(t)\})),$$ where $k_1, \ldots, k_N$ take values in
$\mathbb{N} \cup \{0,\infty\}=\{0,1,\ldots, \infty\}$. The stochastic
process takes values in the state space $\Omega = \Pi_{i=1}^N
\Omega^i$, where $\Omega^i = \cup_{k=0}^\infty \Omega_k^i$, and
$\Omega_k^i = \{(s_i, \{x^i_1, \ldots, x^i_k\})|s_i, x^i_j>0\}$. Since
we regard particles as indistinguishable to avoid confusion when
particles re-enter the system, we use unordered lists denoted by curly
brackets. Notice that $\Omega_k^i=\mathbb{R}_+ \times
((\mathbb{R}_+)^k /\sim)$, where $\sim$ is the equivalence relation
given by $\{x_1, \ldots, x_k\} \sim \{x_{\sigma_{1}}, \cdots,
x_{\sigma_{k}}\}$, $\sigma$ is any $k$-permutation.

Therefore, we can define the Markov jump process $\omega(t)$ generated by random halves model on $\Omega$. We denote the transition kernel of $\omega(t)$ by $P^t(\mathbf{\omega}, \cdot)$. Let $h > 0$ be a fixed number that represents the step size. The
time-$h$ sampling chain of $\mathbf{\omega}_{t}$ is denoted by
$\mathbf{\omega}^{h}_{n}$, or simply $\mathbf{\omega}_{n}$ when it does not lead to a
confusion. The transition kernel of $\mathbf{\omega}_{n}$ is denoted by
$P(\mathbf{\omega}, \cdot)$. 

For the sake of later use, we will define a reference measure
$\Lambda$ on $\Omega$, where $\Lambda = \prod_1^N \Lambda_i$ and
$\Lambda_{i}$ is the natural reference measure on $\Omega^{i}$, such
that the restriction of $\Lambda_{i}$ on $\Omega^{i}_{k}$ is the
quotient of the Lebesgue measure on $\mathbb{R} \times
\mathbb{R}^k$ under the relationship $\sim$.
  
\subsection{Verifying \bf{(A1)} and \bf{(A2)}.}
We will first work on the time-$h$ sampling chain of $\mathbf{\omega}_n$. The verification of analytical conditions of $\mathbf{\omega}_n$ is based on the following theorem. 
\begin{thm}
\label{analytic}
For any set $K \subset \Omega$, of the form $\{\omega \in \Omega|0 \leq k_i \leq K_0, 0\leq s_i \leq S_0, c \leq x^i_j \leq C, i=1 \sim N, j=1 \sim k_i \}$, where $K_0, S_0, c, C$ are positive constants, and any $h>0$, there exists a constant $\eta>0$ such that 
$$
P(\omega,\cdot)>\eta \Lambda_K(\cdot),
$$
for all $\omega \in K$, where $\Lambda_K$ is the reference measure
restricted to $K$.
\end{thm}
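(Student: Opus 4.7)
My plan is to mimic the strategy used for Theorem~\ref{urs1}: partition the time interval $[0,h]$ into a large but fixed number $M$ of subintervals of equal length $\delta=h/M$, and on each subinterval prescribe exactly one event (either a specific particle's clock firing with a specific outcome, or a specific heat-bath injection) while all other clocks remain silent. If this prescribed sequence can deterministically steer $\omega_0$ into an arbitrarily small neighborhood of any prescribed target $\omega\in K$, with a joint probability density bounded below by a positive constant times the volume element $d\omega$ uniformly in $\omega_0,\omega\in K$, then the lower bound $P(\omega_0,\cdot)\ge \eta\Lambda_K$ follows. The new feature compared with Theorem~\ref{urs1} is that $\Omega$ now has a discrete component, namely the particle-count vector $(k_1,\dots,k_N)$, so I will fix a target stratum $\{k_i=K_i:0\le K_i\le K_0\}$ and prove a density lower bound for the transition measure on that stratum, with respect to the Lebesgue measure on $\mathbb{R}^{N+\sum_i K_i}$ that defines $\Lambda$ there; the theorem then follows by summing over the finitely many strata.

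The concrete construction has three phases. \emph{Purge.} I first route every particle currently in $\omega_0$ out to a heat bath through a sequence of jump events; since each site of $\omega_0$ carries at most $K_0$ particles and each particle needs at most $N$ jumps to exit, $N K_0$ subintervals suffice, and the stored energies $s_i$ retain their (bounded) initial values. \emph{Set stored energies.} For each site $i$, I inject one auxiliary particle from the nearer heat bath with an energy in a fixed large window, route it to site $i$, and perform a single mixing in which the uniform variable $u$ is chosen so that $x u^2$ lies in a small neighborhood of the target stored energy $s_i^\ast$; the leftover particle is then routed out of the chain. Because the mixing map $(s,x)\mapsto(xu^2,s+x(1-u^2))$ can realize any $s_i^\ast\in[0,S_0]$ as long as the injected $x$ is taken large enough, the corresponding range of $u$ has a length bounded below uniformly in $s_i^\ast$ and $\omega_0$. \emph{Populate.} Finally, for each of the $K_i$ target particles at site $i$ with target energy $x_j^{i,\ast}\in[c,C]$, I inject a particle from the nearer heat bath and route it to site $i$ by pure jumps, so that its energy is preserved at the injected value.

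The joint probability of the prescribed sequence factors into three kinds of uniformly lower-bounded contributions: (i) the probability that one prescribed clock fires exactly once on a window of length $\delta$ while no other clock fires, which is bounded below because all particle energies remain in a compact interval, making every clock rate uniformly bounded above and below; (ii) the density at the target energy of the heat-bath injection, which is continuous and strictly positive on $[c,C]$; and (iii) the density of the mixing variable $u$ in the window that produces the desired stored energy, contributing a factor proportional to $ds_i^\ast$. Multiplying these factors and summing over the $\prod_i K_i!$ orderings of the indistinguishable injected particles, one arrives at a density lower bound of the desired form $\eta\cdot\prod_i ds_i^\ast\prod_{i,j} dx_j^{i,\ast}$ with $\eta>0$ independent of $\omega_0\in K$. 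The main obstacle I expect is the bookkeeping in Phase~2: each mixing changes both $s_i$ and the auxiliary particle's energy, so I need to keep every auxiliary particle's energy inside a uniformly bounded interval throughout the whole sequence, both to maintain uniform lower bounds on the subsequent clock rates and jump probabilities, and to guarantee that the mixing step can always deliver $s_i^\ast$ with positive, uniformly bounded density. With care this reduces to choosing the injected auxiliary energies from a fixed interval bounded away from both $0$ and $\infty$, analogous to the role of the $\epsilon/2 \le e_i \le \epsilon$ bookkeeping in the proof of Theorem~\ref{urs1}.
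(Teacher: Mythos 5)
Your proposal is correct and follows essentially the same three-phase construction as the paper's proof: first purge all initially present particles out through the heat baths, then for each site inject one high-energy auxiliary particle that mixes exactly once to set the stored energy $s_i$ in a $ds$-window before exiting, and finally inject the target particles and route them to their sites by pure jumps (the paper's events $F_i$, $H_i\cap A_i$, and $G$ respectively). The minor variants you mention — injecting from the nearer bath rather than always from the left, fixing a stratum $\{k_i=K_i\}$ and summing over the finitely many strata, and accounting for the $\prod_i K_i!$ orderings of indistinguishable particles — are bookkeeping choices that do not change the argument, and your identified obstacle (keeping auxiliary energies in a fixed compact window to preserve uniform rate bounds) is exactly the point the paper handles by bounding $E$ above by $NS_0$ and below by the target window.
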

\begin{proof}
For $\omega = ((\hat{s}_1,\{\hat{x}_1^1,\ldots,
\hat{x}_{k_1}^1\}), \ldots, (\hat{s}_N,\{\hat{x}_1^N,\ldots,
\hat{x}_{k_N}^N\})) \in K$, let $A_{\omega}(ds)=
\{(s_1,\{x_1^1,\ldots, x_{k_1}^1\}), \ldots,
(s_N,\{x_1^N,\ldots, x_{k_N}^N\})) | \hat{s}_1 \leq s_1 \leq
\hat{s}_1+ds, \ldots, \hat{s}_N \leq s_N \leq \hat{s}_N +ds,
\hat{x}_j^i \leq x_j^i \leq \hat{x}_j^i +ds, \text{ for } 1\leq i \leq
N, 1\leq j \leq k_i   \}$ for $ds \ll 1$. 

It then suffices to prove
that for each
$$
  \bar{\omega} = ((\bar{s}_1,\{\bar{x}_1^1,\ldots,
\bar{x}_{\bar{k}_1}^1\}), \ldots, (\bar{s}_N,\{\bar{x}_1^N,\ldots,
\bar{x}_{\bar{k}_N}^N\}))  \in K \,,
$$
we have
$$
P(\bar{\omega}, A_\omega (ds)) \geq \sigma ds^{1 + k_{1}}\cdots ds^{1 + k_{N}} 
$$
for $0 < ds \ll 1$, where $\sigma$ is a strictly positive constant that is independent of
$\bar{\omega}$ and $\omega$. 

In order to do so, we will construct a sequence of events to go from
the state $\bar{\omega}$ to $A_\omega (ds)$ with positive probability. Let
$M = \sum_{i = 1}^{N} k_{i}$ and $\bar{M} = \sum_{i = 1}^{N }
\bar{k}_{i}$. Let $\delta = \frac{h}{M + \bar{M} + N}$. And let $E=
\sum_{ i = 1}^{N} (\hat{s}_i+ds)$. We consider the events $F_{i}, G, H_i, A_{i}$, where $i = 1 \sim
N$. 

\begin{itemize}
  \item $F_{i}= \{ \text{on } ( (i-1) \delta, i\delta], i = 1 \sim \bar{M}$, no new particle enters, and
    one particle present initially exits the system without exchanging
    energy $\}$. 
\item Define an auxiliary event $A_i = \{\hat{s}_i \leq s_i \leq
  \hat{s}_i +ds\}$ on $((M + i-1)\delta, (M + i) \delta]$. 
\item $H_i = A_i \cap$ $\{$ on $( M\delta+(i-1)\delta, M\delta+i
  \delta]$, exactly one particle, carrying energy on the interval $[E,
  2E]$, enters from the
  left and jumps through all sites $s_{j}, j < i$ without exchanging
  energy, then exchanges energy with site $i$, and jumps to exit the
  system from the right $\}$ $\cap$ $\{$ no other new particle enters
  $\}$ for $i=M \sim M+N$. 
\item $G = $ $\{$ on $(  i \delta, (i+1)\delta ],  i = \bar{M} + N \sim \bar{M}+N+M$,
    $k_{i}$ particles with energy $x_j^i \in [\hat{x}_j^i,\hat{x}_j^i
    +ds]$ for $j=1 \sim k_i$
    enters the system from the left and jumps until reaching site $i$,
  without exchanging energy $\}$ $\cap$ $\{$ no new particle enters and
  existing particles do nothing $\}$.
\end{itemize}

The idea is that particles initially present at each site are first
emptied from the system. Then for each site, one particle with
sufficiently large amount of energy enters the system to mix at the
corresponding site. Lastly, particles in the target set enter the
system and jump to corresponding sites. We need to show that the
probability of occurence of the sequence of events described above is
always strictly positive. Here are the considerations. 

\begin{enumerate}
\item The initial number of particles $k_i \leq K_0$ for each
  $i=1 \sim N$. Clock rates $(1+m)S \sqrt{x_j^i}$ are bounded above
  zero since $x_j^i \geq c$, and bounded below infinity since $x_j^i
  \leq C$ by assumption. Hence $F_{i}$ occurs with strictly
  positive probability. 

\item  Let $u \in (0,1)$ be the fraction in the mixing that puts $s_i
  \in [\hat{s}_i, \hat{s}_i+ds]$. Let $X$ be the particle energy in
  the event $H_{i}$. Rearrange the terms to get
  $\hat{s}_i/X \leq u^2 \leq
  (\hat{s}_i+ds)/X$. Note that $X$ is bounded from above by $2N S_{0}$. Therefore, $\mathbb{P}[ A_{i}] > \alpha
  ds$ for some constant $\alpha > 0$.

\item In addition, during the event $\{ H_{i}, i = 1 \sim N \}$, $E$
  is bounded from above by $N S_{0}$. The probability of $H_{i}$ is
  greater than $\mathrm{const} \cdot ds$.

\item The number of particles in the destination set $\bar{k}_i \leq
  K_0$ for each $i= 1 \sim N$, and clock rates are bounded both above
  from zero and below from $C$. Hence $G$ occurs with probability at
  least $\mathrm{const} \cdot ds^{k_{1} } \cdots ds ^{k_{N}}$. 
\end{enumerate}
In addition, all these probabilities and probability densities are uniformly bounded from below
for all $\bar{\omega}$ and $\omega$ in $K$. 
\end{proof}
As a corollary, we can prove that $\omega_n$ is both aperiodic and irreducible with respect to the reference measure. 
\begin{cor}
$\omega_n$ is a strongly aperiodic Markov chain. 
\end{cor}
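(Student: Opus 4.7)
The plan is to mimic the argument used for Corollary \ref{aperiod} in the stochastic energy exchange model, since the setup is completely parallel. Strong aperiodicity, by the definition recalled in Section 2, means that the chain admits an accessible uniform reference set $\mathfrak{C}$ together with a minorizing probability measure $\theta$ satisfying $\theta(\mathfrak{C})>0$; once we can exhibit such a pair, there is essentially nothing more to do.

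First I would fix any set $K\subset\Omega$ of the form described in Theorem \ref{analytic} (say $K_0=1$, $S_0=1$, $c=1/2$, $C=2$, so that $K$ is non-degenerate and $\Lambda(K)>0$). Theorem \ref{analytic} then supplies a constant $\eta>0$ with
\[
P(\omega,\cdot)\;\ge\;\eta\,\Lambda_K(\cdot)\qquad\text{for every }\omega\in K,
\]
which is precisely the minorization condition with uniform reference set $\mathfrak{C}=K$ and minorizing measure $\theta=\Lambda_K/\Lambda(K)$ (rescaled to be a probability measure, adjusting $\eta$ accordingly). Second I would observe that $\theta(\mathfrak{C})=\theta(K)=1>0$, so by the definition of strong aperiodicity in Section 2 the chain $\omega_n$ is strongly aperiodic.

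The only subtlety worth mentioning is that the reference measure $\Lambda_K$ lives on a space with variable particle number, so one should note that restricting to $K$ (which bounds each $k_i$ by $K_0$) automatically gives a $\Lambda$-finite set with $\Lambda(K)>0$, making the normalization to a probability measure legitimate. Beyond that bookkeeping, no further work is required, and there is no real obstacle — the hard analytical content has already been absorbed into Theorem \ref{analytic}.
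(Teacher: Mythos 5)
Your proof is correct and takes essentially the same route as the paper: invoke Theorem \ref{analytic} to obtain the minorization $P(\omega,\cdot)\ge\eta\,\Lambda_K(\cdot)$ on a box $K$, observe that the resulting reference probability measure assigns positive mass to $K$, and conclude by the definition of strong aperiodicity. The only added content in your version is the explicit normalization of $\Lambda_K$ to a probability measure, which the paper leaves implicit.
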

\begin{proof}
By \ref{analytic}, $K$ is a uniform reference set. In addition $\Lambda_K(K)>0$. The strong aperiodicity follows from its definition.
\end{proof} 
Therefore $\omega_n$ is aperiodic. 
\begin{cor}
$\omega_n$ is $\Lambda$-irreducible, where $\Lambda$ is the reference measure on $\Omega$.
\end{cor}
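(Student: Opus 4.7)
The plan is to mirror the argument used for Corollary \ref{irreducible} in the stochastic energy exchange model, using Theorem \ref{analytic} as the main input. Fix any $\omega_0 \in \Omega$ and any measurable $A \subset \Omega$ with $\Lambda(A) > 0$. It suffices to exhibit a set $K$ of the form described in Theorem \ref{analytic} that both contains $\omega_0$ and intersects $A$ in a set of positive reference measure; once such a $K$ is produced, Theorem \ref{analytic} instantly yields $P(\omega_0, A) \geq \eta\, \Lambda_K(A) > 0$, which is precisely $\Lambda$-irreducibility.

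The construction of $K$ proceeds in two stages. First, since $\omega_0 = ((\hat s_1, \{\hat x^1_j\}), \dots, (\hat s_N, \{\hat x^N_j\}))$ lies in $\Omega$ and (almost surely under $\Lambda$) has a finite number of particles at each site with strictly positive energies, I choose
$$
K_0 \geq \max_i k_i(\omega_0), \quad S_0 \geq \max_i \hat s_i, \quad c \leq \min_{i,j}\hat x^i_j, \quad C \geq \max_{i,j}\hat x^i_j,
$$
so that $\omega_0 \in K$ for the resulting box $K = \{\omega : 0 \leq k_i \leq K_0,\ 0\leq s_i \leq S_0,\ c \leq x_j^i \leq C\}$. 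Second, I enlarge $K_0$, $S_0$, $C$ and shrink $c$ if necessary to force $\Lambda(A \cap K) > 0$. This is possible because the reference measure $\Lambda = \prod_{i=1}^N \Lambda_i$ is $\sigma$-finite (each $\Lambda_i$ decomposes as a countable sum of quotients of Lebesgue measures on finite-dimensional products of $\mathbb{R}_+$), and an exhaustion of $\Omega$ by nested boxes of the prescribed form gives $\Lambda(A \cap K) \nearrow \Lambda(A) > 0$.

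Finally, with such a $K$ fixed, Theorem \ref{analytic} gives a constant $\eta > 0$ with $P(\omega_0, \cdot) \geq \eta\, \Lambda_K(\cdot)$, and in particular
$$
P(\omega_0, A) \;\geq\; \eta\, \Lambda_K(A) \;>\; 0.
$$
Since $\omega_0 \in \Omega$ and $A$ with $\Lambda(A) > 0$ were arbitrary, $\omega_n$ is $\Lambda$-irreducible. The only step that is not entirely routine is the exhaustion argument producing $\Lambda(A \cap K) > 0$, but this reduces to the $\sigma$-finiteness of $\Lambda$ and the continuity of measures from below, so I do not anticipate a genuine obstacle.
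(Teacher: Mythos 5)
Your argument is essentially the same as the paper's: both produce a box $K$ of the prescribed form that contains $\omega_0$ and satisfies $\Lambda(K \cap A) > 0$, then apply Theorem \ref{analytic} to conclude $P(\omega_0, A) \geq \eta\,\Lambda_K(A) > 0$. Your version is marginally more careful in explicitly constructing a single $K$ serving both roles (containing $\omega_0$ and meeting $A$ in positive $\Lambda$-measure) and in spelling out the $\sigma$-finiteness/exhaustion step, but this is a matter of exposition rather than a different route.
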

\begin{proof}
Let $A \subset \Omega$ be a set with strictly positive measure.  Then
there exists a set $U$ that has the form $U = \{\omega \in \Omega|0 \leq
k_i \leq K_0, 0\leq s_i \leq S_0, c \leq x^i_j \leq C, i=1 \sim N, j=1
\sim k_i \}$ and $\Lambda_K(U \cap A)>0$. 

For any $\omega_0 \in \Omega$ and the time step $h>0$, there exists a
set $K$ that has the form as in Theorem \ref{analytic} such that
$\omega_{0} \in K$. Hence same
construction as in Theorem \ref{analytic} implies that $P^h(\omega_0, \cdot) > \eta \Lambda_K(\cdot)$ for some $\eta>0$. Therefore $P^h(\omega_0, A)>\eta \Lambda_K(A)>0$. 
\end{proof} 

\subsection{Absolute continuity of invariant measure}
The proof of the absolute continuity of $\pi$ with respect to
$\Lambda$ is similar as in the previous section.

\begin{pro}
If $\pi$ is an invariant measure of $\omega_t$, then $\pi$ is absolutely continuous with respect to $\lambda$ with a strictly positive density. 
\end{pro}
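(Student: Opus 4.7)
The plan is to mimic the proof of Proposition \ref{abscont} from the stochastic energy exchange section verbatim, replacing the Lebesgue measure by the reference measure $\Lambda$ on $\Omega$ and replacing the uses of Theorem \ref{urs1} by the analogous Theorem \ref{analytic}. Concretely, I first establish the random-halves analogue of Lemma \ref{abscont2}: if $\mu \ll \Lambda$ then $\mu P^t \ll \Lambda$ for every $t>0$. By a standard argument one only needs to check that the push-forward of a $\Lambda$-absolutely continuous measure under a single clock event stays $\Lambda$-absolutely continuous. There are three event types to examine: a particle jump to a neighboring cell or to a heat bath (a reshuffling/deletion of one coordinate among the pieces $\Omega_k^i$, which clearly preserves the absolutely continuous part), a particle injection from a heat bath (which convolves an absolutely continuous density on the new energy coordinate with the existing density, yielding an absolutely continuous density on $\Omega_{k+1}^i$), and an energy mixing event $(s_i, x_j^i) \mapsto (x_j^i u^2,\ s_i + x_j^i(1-u^2))$ with $u$ uniform on $(0,1)$, whose Jacobian on $\mathbb{R}_+^2 \times (0,1)$ is nondegenerate almost everywhere, hence again preserves absolute continuity.

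Next, let $\pi$ be an invariant probability measure and write $\pi = \pi_{abs} + \pi_{perp}$ for its Lebesgue decomposition with respect to $\Lambda$. Assume for contradiction $\pi_{perp} \neq 0$. By the lemma above, $\pi_{abs} P^t \ll \Lambda$. The goal is to show that $\pi_{perp} P^t$ also produces a nontrivial $\Lambda$-absolutely continuous component, so that the absolutely continuous part of $\pi P^t$ is strictly larger than that of $\pi$, contradicting $\pi P^t = \pi$.

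To produce that extra absolutely continuous mass I would invoke Theorem \ref{analytic}: for any $\omega \in K$, $P^{h}(\omega,\cdot) \geq \eta \Lambda_K(\cdot)$, so $P^{t/2}(\omega,\cdot)$ has a strictly positive density on $K$ for $t \geq 2h$. From an arbitrary $\omega_0 \in \Omega$, the set $K$ is reachable with strictly positive probability in finitely many transitions by exactly the construction used in Theorem \ref{analytic} (first empty the system into the heat baths, then inject particles of appropriate energies). Hence $P^{t/2}(\omega_0, K) > 0$ for all $\omega_0 \in \Omega$, and the Chapman–Kolmogorov identity
\[
P^t(\omega_0, \cdot) \ \geq\ \int_K P^{t/2}(\omega_0, d\omega')\, P^{t/2}(\omega', \cdot)
\]
shows that $P^t(\omega_0, \cdot)$ has a strictly positive $\Lambda$-density on $K$. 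Integrating against $\pi_{perp}$ gives the desired $\Lambda$-absolutely continuous component of $\pi_{perp} P^t$ that is supported on $K$ with positive $\Lambda$-mass, yielding the contradiction. The same argument also shows the density of $\pi$ is strictly positive on $K$, and by irreducibility and Theorem \ref{analytic} applied to arbitrary sets $K$ of the stated form, the density is strictly positive on all of $\Omega$.

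The main obstacle is the bookkeeping in the first step: because $\Lambda$ is assembled as a sum of quotient Lebesgue measures over the countable family $\{\Omega_k^i\}$ of pieces of different dimensions, each transition type mixes mass between these pieces, and one must verify absolute continuity of the push-forward piece by piece in a way that is uniform in the particle counts $(k_1,\ldots,k_N)$. Once that is handled, the structural argument in the second paragraph is essentially a copy of the proof of Proposition \ref{abscont}.
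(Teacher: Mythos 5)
Your proposal is correct and follows essentially the same route as the paper: the paper also invokes the random-halves analogue of Lemma~\ref{abscont2} (citing Lemma~6.3 of the Li--Young reference rather than re-deriving it event-by-event as you do), then decomposes $\pi = \pi_{abs} + \pi_{\perp}$ and uses Theorem~\ref{analytic} plus accessibility of $K$ to show $\pi_{\perp}P^t$ acquires an absolutely continuous component, contradicting invariance. You spell out the event-by-event check of absolute-continuity preservation and the strict positivity of the density a bit more explicitly than the paper does, but the structure and key ingredients are identical.
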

\begin{proof}
Let $\pi=\pi_{abs}+\pi_{\perp}$ be an invariant measure, where $\pi_{abs}$ and $\pi_{\perp}$ are absolutely continuous and singular components with respect to $\Lambda$ respectively. Since $\pi_{abs} \ll  \Lambda$, $\pi_{abs} P^t  \ll \Lambda$ for any $t > 0 $ by Lemma 6.3 of
\cite{li2014nonequilibrium}. For similar reasons as in the previous
model, we again refer readers to \cite{li2014nonequilibrium} for
detailed calculations. The rest of the proof then follows the same
line as in the proof of Proposition \ref{abscont}.
\end{proof} 

\subsection{Verifying {\bf (N1)} and {\bf (N2)}.}
Now we will present our numerical results for the random halves model. We let $N=3$ or $4$, depending on the computational cost of the simulation. The uniform reference set $\mathfrak{C}$ is chosen as 
$$
\mathfrak{C}=\{\omega \in \Omega|0 \leq k_i \leq 40, 0 \leq s_i \leq 100, 0.1 \leq x^i_j \leq 100, i =1 \sim N, j=1 \sim k_i\}
$$
Throughout our numerical justification about $\tau_{\mathfrak{C}} = \tau_{\mathfrak{C}}(h)$, we let $h = 0.1$. We very {\bf(N1)} for the numerically generated invariant measure. The
numerical invariant measure is generated by running the process for a
sufficiently long time from a suitable initial distribution. In our
simulation, the initial distribution for the simulation of the
numerical invariant measure is 
$$
\omega_0 \sim ((s_1(t),\{x_1^1(t),\ldots,
x_{k_1}^1(t)\}), \ldots, (s_N(t),\{x_1^N(t),\ldots, x_{k_N}^N(t)\})) \,,
$$
where $N=3$, each $s_i$ is uniformly distributed between 0 and 100,
each $k_i$ is a poisson distribution with mean $(\rho_L+\rho_R)/2$,
and each $x^i_j$ is an exponential distribution with mean
$(T_L+T_R)/2$. We find that when $T=100$, the expectation of the
observables we have tested are stabilized (Figure \ref{rhmsstest}). Therefore the numerical
invariant measure is chosen as $\hat{\pi} := \mu_0 P ^{100}$. Then from the result of our simulation that chooses the numerical invariant meausure as the specific starting state,
we conclude that $\mathbb{P}_{\omega_0}[\tau_{\mathfrak{C}}>t] \sim
t^{-2}$ and $\mathbb{P}_{\hat{\pi}}[\tau_{\mathfrak{C}}>t] \sim
t^{-2}$. (Figure \ref{RHM_ss}). 

\begin{figure}[htbp]
\centerline{\includegraphics[width = \textwidth]{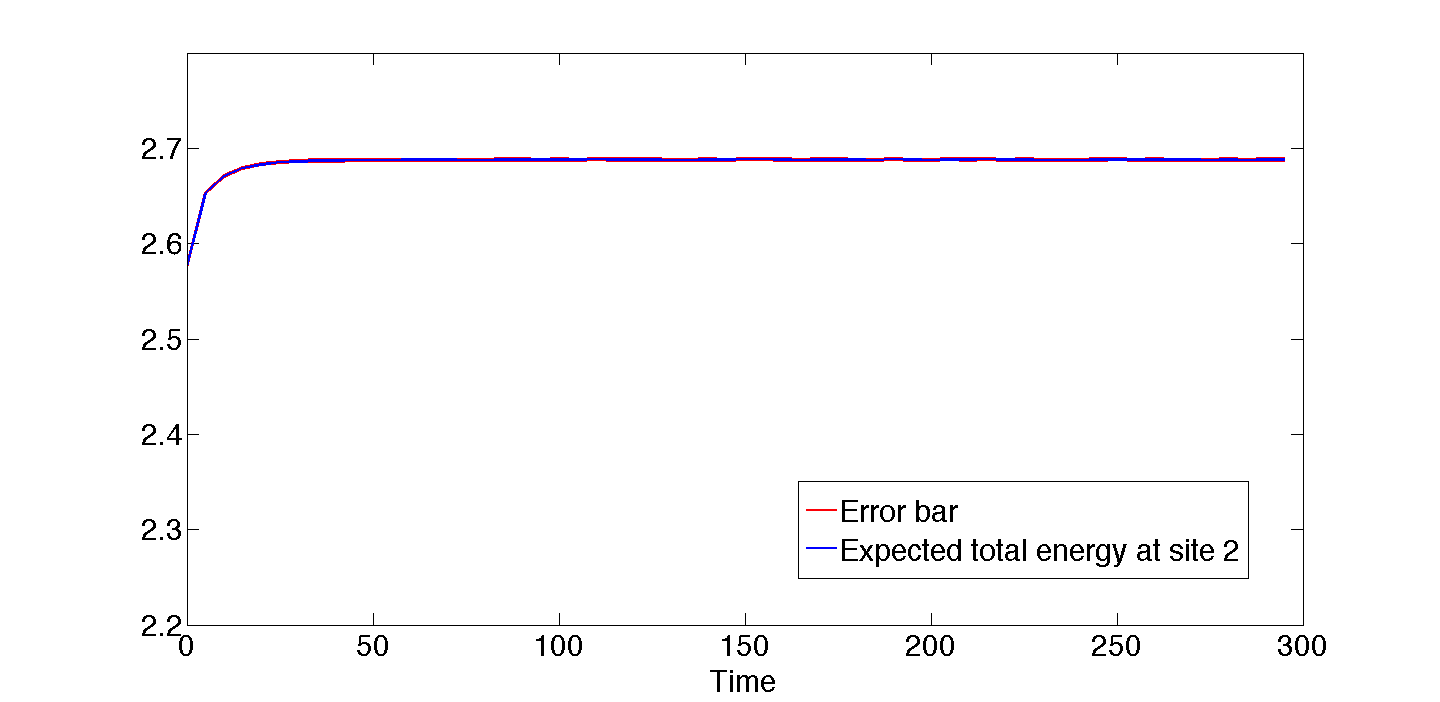}}
\caption{Total energy at the second site verses time. Sample size $ M =
  1 \times 10^{8}$. The error bar represents $\pm 1.96 M^{-1/2}$ times
the sample standard deviation. }
\label{rhmsstest}
\end{figure}

It remains to check {\bf{(N2)}}. Same as in the previous section, the
transition kernel of the time-$h$ sample chain does not have a clean
explicit form. Hence we choose to numerically verify the boundedness
of 
$$
  \gamma (\omega) = \sup_{t \geq h}
  \frac{\mathbb{P}_{\omega}[\tau_{\mathfrak{C}} > t ]}{ t^{-2}} \,.
$$
In this example, the uniform reference set $\mathfrak{C}$ has much
higher dimension. But we can still numerically capture the
monotonicity. Our simulation result shows that $\gamma (\omega)$ increases
monotonically with decreasing site energy  (Figure \ref{RHM_search_1})
and particle energy at each site (Figure \ref{RHM_search_2}), and increases
monotonically with number of particles at each site (Figure \ref{RHM_search_3}). Therefore we expect the
maximal of $\gamma (\omega)$ in $\mathfrak{C}$ to be $\omega_{*}=\{(0,\{0.1,\ldots,
0.1\}), (0,\{0.1,\ldots, 0.1\}),(0,\{0.1,\ldots, 0.1\})\}$, for
$k_i=40, i = 1\sim 3$. This result matches the heuristic argument that
lower site energy, lower particle energy, and higher number of
particles at the initial condition produces higher probability of
entering the low energy states. Finally, we run the simulation again
with initial value $\omega_{*}$ to verify that the return time has
tail $\sim t^{-2}$ in Figure \ref{RHM_max}. ($\gamma( \omega_{*})$ is
approximately $1291.8695$ with standard deviation $5.3388$.)

\begin{figure}[h!]
\centering{\includegraphics[width = 0.6\paperwidth]{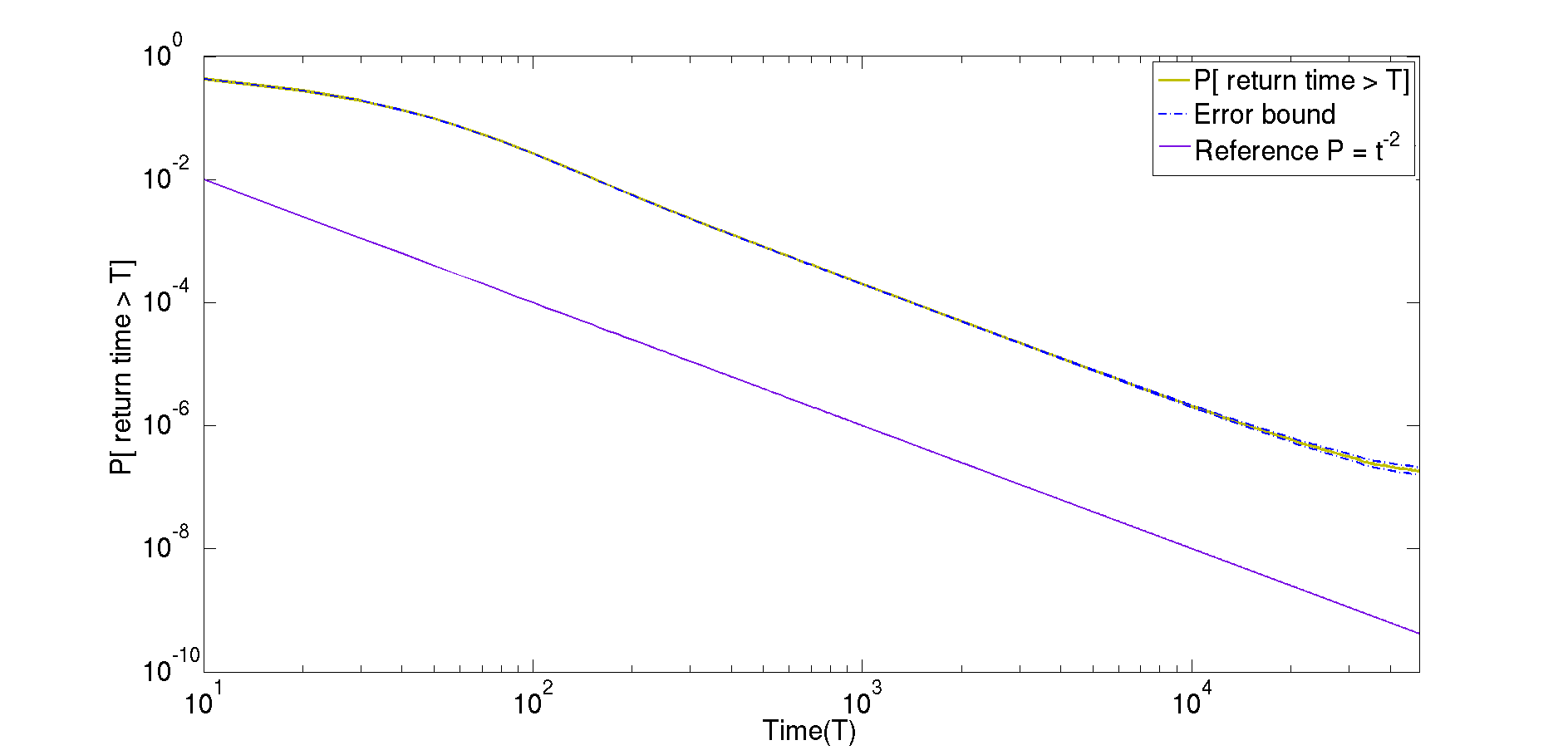}}
\caption{$\mathbb{P}_{\hat{\pi}} [\tau_{\mathfrak{C}}>t]$, $\pi$ is the
  numerical steady state obtained from long time averaging. Sample
  size = $1 \times 10^{8}$. The purple line is a reference line with slope
$-2$. The error bar is calculated as in Remark 3.5.}
\label{RHM_ss}
\end{figure}

\begin{figure}[h!]
\begin{minipage}{0.5\textwidth}
\centering{\includegraphics[width = 0.38\paperwidth]{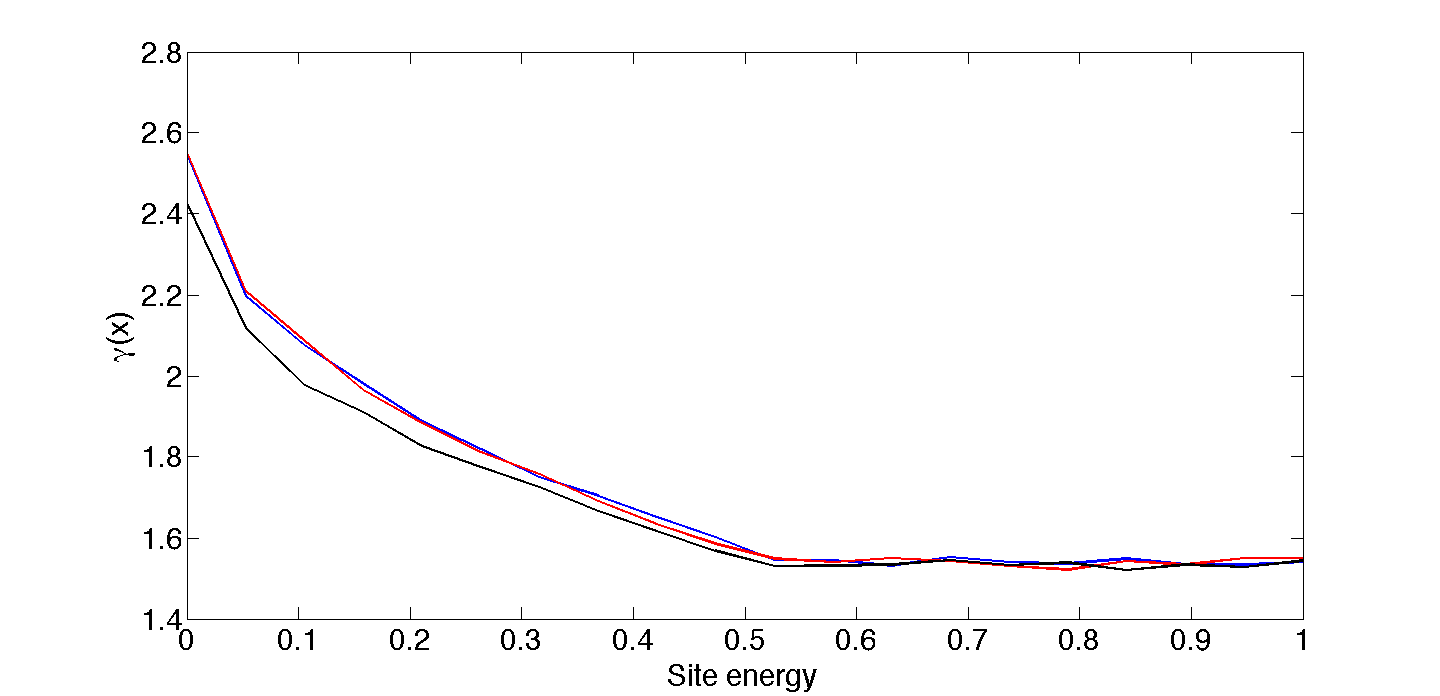}}
\caption{Change of $\gamma (\omega)$
  for varying $\omega$ when site energy at only one site changes. The
  energy configuration of an unchanged site is: site energy $ = 1$,
  two particles per site with particle energy = $1$. Sample size = $1 \times 10^{7}$ for each initial condition.}
\label{RHM_search_1}
\end{minipage}\hfill
\begin{minipage}{.5\textwidth}
\centering{\includegraphics[width = 0.38\paperwidth]{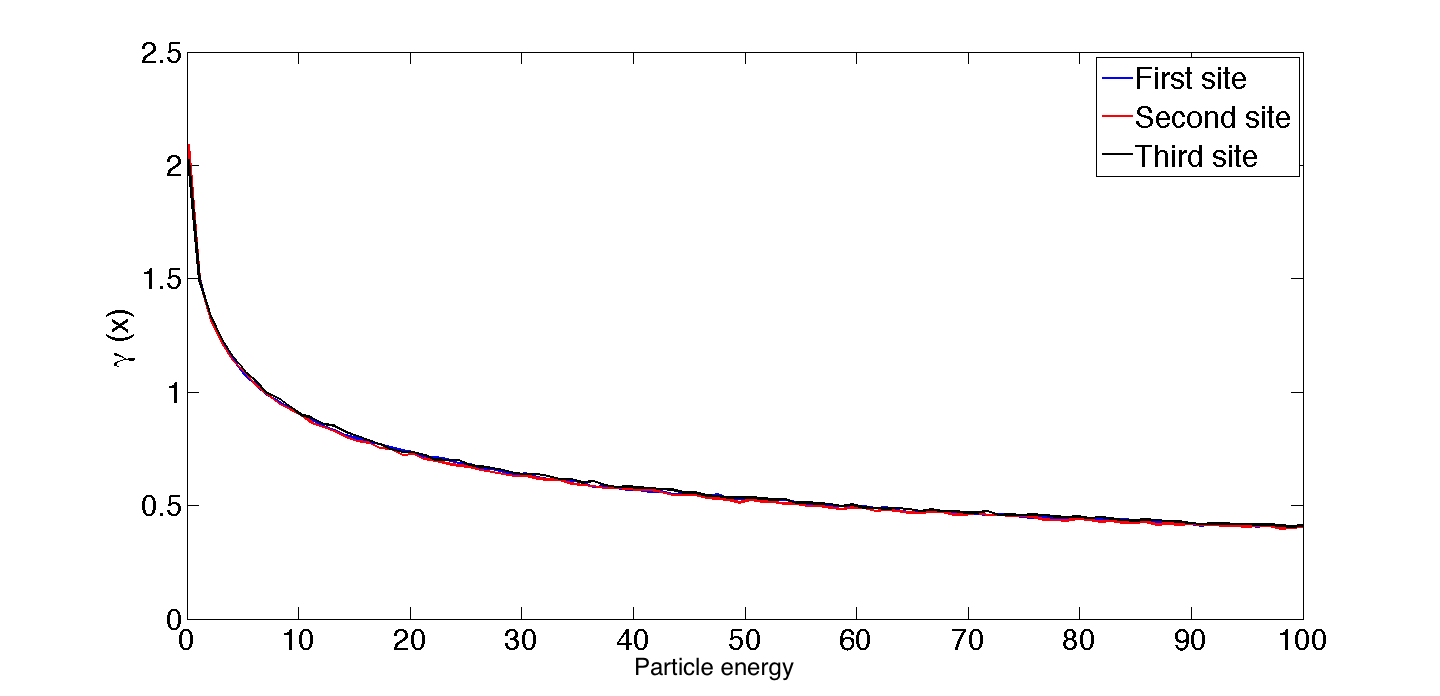}}
\caption{Change of $\gamma (\omega)$
  for varying $\omega$ when particle energy at only one site changes. The
  energy configuration of an unchanged site is: site energy $ = 1$,
  two particles per site with particle energy = $1$. Sample size = $1 \times 10^{7}$ for each initial condition.}
\label{RHM_search_2}
\end{minipage}\hfill
\end{figure}

\begin{figure}[h!]
\begin{minipage}{0.5\textwidth}
\centering{\includegraphics[width = 0.38\paperwidth]{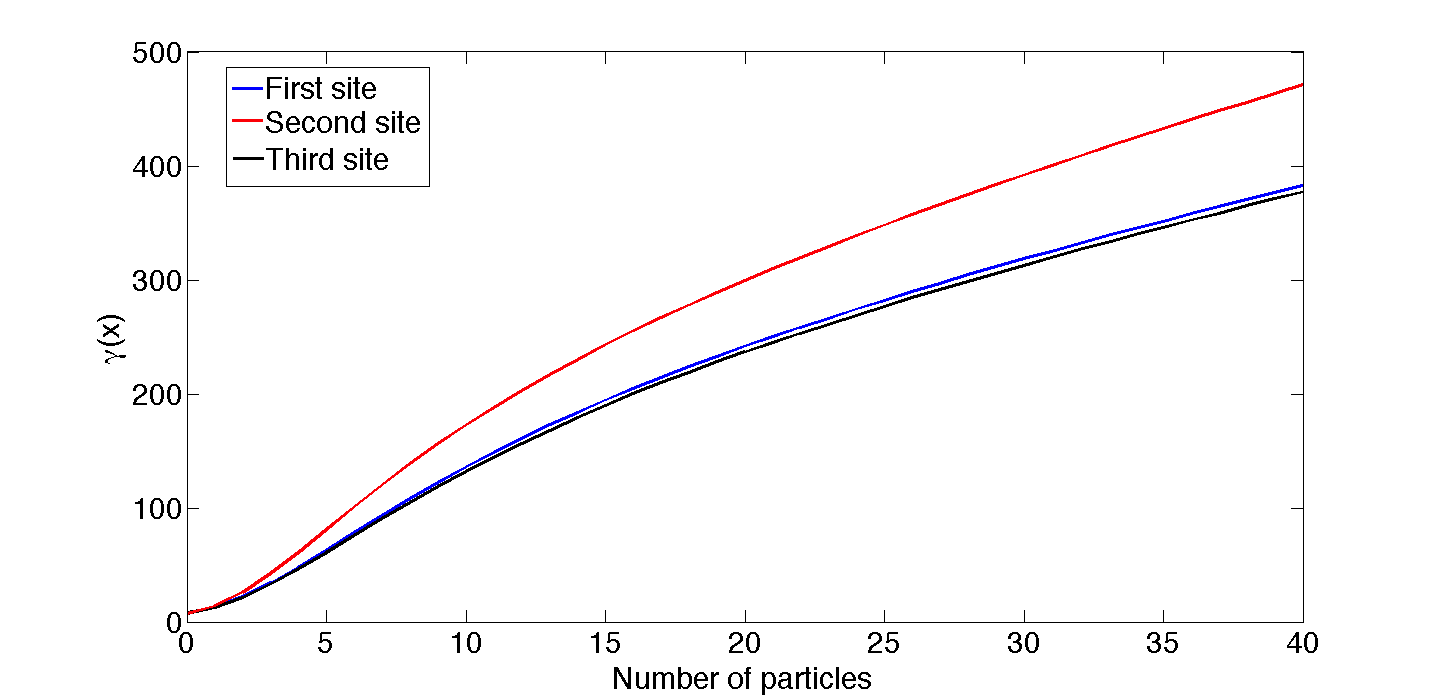}}
\caption{Change of $\gamma (\omega)$
  for varying $\omega$ when number of particles at only one site
  changes. Energy of each particle is $0.1$. The
  energy configuration of an unchanged site is: site energy $ = 1$,
  two particles per site with particle energy = $1$. Sample size = $1 \times 10^{7}$ for each initial condition.}
\label{RHM_search_3}
\end{minipage}\hfill
\begin{minipage}{.5\textwidth}
\centering{\includegraphics[width = 0.38\paperwidth]{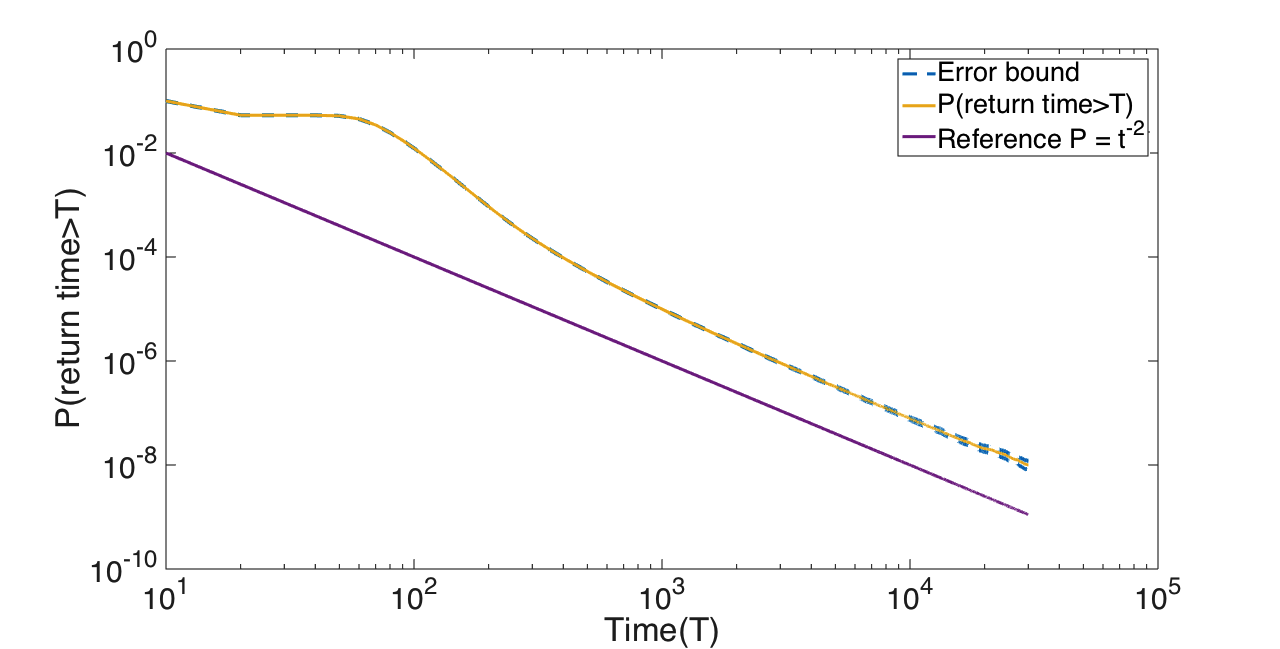}}
\caption{$\mathbb{P}_{\omega*}[\tau_{\mathfrak{C}}>t]$ for
  $\omega_{*}=\{(0,\{0.1,\ldots, 0.1\}),\ldots, (0,\{0.1,\ldots,
  0.1\})\}$, for $k_i=40, i=1 \sim 3$. Sample size = $1 \times
  10^{10}$. The purple line is a reference line with slope
$-2$. The error bar is calculated as in Remark 3.5.}
\label{RHM_max}
\end{minipage}\hfill
\end{figure}

\subsection{Main Conclusion}
The previous subsection verifies {\bf (N1)} for $\delta_{\omega_0}$
and $\hat{\pi}$, as well as {\bf (N2)}. The slopes of
$\mathbb{P}[\tau_\mathfrak{C}>t]$ for both initial conditions are
$2$. Note that $\mathbb{P}[\Psi_{t} = \Psi_{0} \,|\, \Psi_{0} \in K] =
\mathbb{P}_{\Psi_{0}}[ \mbox{ no clock rings up to } t]$ is uniformly
positive for each given $t > 0$. By Theorem \ref{c2d}, {\bf (N1)}
and {\bf (N2)} hold for $\omega_{n}$ with parameter $2
-\epsilon/2 $ for arbitrarily small $\epsilon > 0$. Therefore, conclusions (a)-(d) in Section 4.4 hold for $\omega_{n}$.   
 
It remains to pass the results for $\omega_{n}$ to $\omega_{t}$. By
Proposition \ref{c0}, it is sufficient to prove ``continuity at zero''
for $\omega_{t}$.  

\begin{lem}
For any probability measure $\mu$ on $\Omega$, 
$$
  \lim_{\delta \rightarrow 0} \| \mu P^{\delta} - \mu \|_{TV} = 0
$$
\end{lem}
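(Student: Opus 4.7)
The strategy is to mirror the argument used in Lemma \ref{kmpcont0} for the stochastic energy exchange model, adapting it to the more complicated state space $\Omega$ in which the particle number at each site is unbounded. The intuition is identical: on a large enough ``bounded'' subset of $\Omega$ the total event rate is uniformly bounded, so during a short time interval no clock rings with high probability, and consequently $\omega_t$ stays equal to $\omega_0$ with high probability.

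Given $\epsilon > 0$, I would first construct a set $A \subset \Omega$ of the form used in Theorem \ref{analytic},
$$
 A = \{\omega \in \Omega \,|\, k_i \leq K_0,\ s_i \leq S_0,\ c \leq x^i_j \leq C,\ i = 1 \sim N,\ j = 1 \sim k_i\},
$$
with $\mu(A) \geq 1 - \epsilon/4$. Such an $A$ exists because $\Omega$ is a countable union over the particle-number multi-indices $(k_1, \ldots, k_N)$ of finite-dimensional regions, so one can first take $K_0$ large enough that $\mu(\{k_i \leq K_0 \text{ for all } i\}) \geq 1 - \epsilon/8$, and then apply standard tightness on this sublevel set (which is a finite union of finite-dimensional sets) to choose $S_0, C$ large and $c > 0$ small so that the further restriction trims off at most another $\epsilon/8$ of mass.

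Next, I would observe that on $A$ the total event rate is uniformly bounded. The clock rate of any individual particle is $(1+m)S\sqrt{x^i_j} \leq (1+m)S\sqrt{C}$, there are at most $N K_0$ such clocks, and the boundary injection clocks contribute $\rho_L + \rho_R$, so some constant $\Lambda_0 < \infty$ bounds the total rate uniformly on $A$. Hence for $\omega \in A$,
$$
 \mathbb{P}_\omega[\text{no event in } [0,\delta)] \geq e^{-\Lambda_0 \delta},
$$
which can be made at least $1 - \epsilon/4$ by taking $\delta$ sufficiently small. For any measurable $U \subset \Omega$, the decomposition
$$
 (\mu P^\delta)(U) = \int_{A \cap U} P^\delta(\omega, U)\mu(d\omega) + \int_{A \setminus U} P^\delta(\omega, U)\mu(d\omega) + \int_{A^c} P^\delta(\omega, U)\mu(d\omega)
$$
then yields, exactly as in Lemma \ref{kmpcont0}, the three error bounds $a_1, a_2, a_3 \leq \epsilon/4$, together with $|\mu(U) - \mu(A \cap U)| \leq \mu(A^c) \leq \epsilon/4$. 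Combining these, $|(\mu P^\delta)(U) - \mu(U)| < \epsilon$ uniformly in $U$, so $\|\mu P^\delta - \mu\|_{TV} \leq \epsilon$.

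The main obstacle I anticipate is the first step: because the particle number $k_i$ ranges over $\mathbb{N} \cup \{\infty\}$ and the particle energies can take arbitrarily small positive values, some care is needed to verify that a bounded set $A$ of the above form can be found with $\mu$-mass arbitrarily close to one. Once this tightness-type argument is in place, the remainder of the proof is a routine transcription of the stochastic energy exchange case, with the total clock rate $\Lambda_0$ playing the role of the uniform rate bound used there.
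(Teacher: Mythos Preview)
Your proposal is correct and follows essentially the same approach as the paper's proof, which explicitly refers back to the calculation in Lemma~\ref{kmpcont0} after choosing a bounded set $A$ (the paper uses $A=\{0\le k_i\le K,\ 0\le s_i\le S,\ 0\le x^i_j\le M\}$, without your lower bound $c$ on particle energies, but that bound is harmless and unnecessary since only an upper bound on the total clock rate is needed). Your additional care in justifying the existence of $A$ via a tightness argument on the countable union of finite-dimensional strata is a detail the paper simply asserts.
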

\begin{proof}
It is sufficient to prove that for any $\varepsilon > 0$, there exists a
$\delta > 0$ such that
$$
  \| \mu P^{\delta} - \mu \|_{TV} \leq \epsilon \,.
$$
Since $\mu$ is finite, there exists a bounded set $A\subset \Omega := \{ 0\leq k_i \leq K, 0 \leq s_i \leq S, 0 \leq x_j^i \leq M \}$ such that $\mu(A) > 1
- \epsilon/4$. By the definition of $A$, clock rates for initial values in $A$ are uniformly
bounded. Therefore, one can find a sufficiently small $\delta > 0$,
such that $\mathbb{P}[ \mbox{ no clock rings on } [0, \delta) ] \geq 1
- \epsilon/4$. For any set $U \subset \Omega$, the same calculation as
in the proof of Lemma \ref{kmpcont0} implies
$$
  | (\mu P^{\delta})(U) - \mu(U) | < \epsilon
$$
for any $U \subset \Omega$. By the definition of the total
variation norm, we have
$$
  \| \mu P^{\delta} - \mu \| \leq \epsilon \,.
$$

This completes the proof. 

\end{proof}

It remains to prove the uniqueness of the invariant measure.

\begin{pro}
For any $h > 0$, $\omega^{h}_{n}$ admits at most one invariant
probability measure.
\end{pro}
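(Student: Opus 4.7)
The plan is to follow exactly the same strategy used for the stochastic energy exchange model. By Theorem \ref{analytic}, for any uniform reference set $K$ of the prescribed form and any $h>0$, the time-$h$ transition kernel dominates $\eta \Lambda_K$ on $K$. In particular, replacing $h$ by $h/2$, the same proof shows that for every $\omega \in K$ the measure $P^{h/2}(\omega,\cdot)$ has a strictly positive density with respect to $\Lambda$ on $K$.

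The second step is to show that $P^{h/2}(\omega_0, K) > 0$ for every $\omega_0 \in \Omega$. This is the analog of the statement used in the previous section and is essentially a consequence of $\Lambda$-irreducibility of $\omega_n$ applied to the half-step chain, but one must justify that the hitting of $K$ can be arranged in time at most $h/2$ rather than in an arbitrary number of unit steps. The concrete way to do this is to re-run the construction in the proof of Theorem \ref{analytic} with $h$ replaced by $h/2$: on a time interval of length $h/2$ one first drains all initially present particles from each site through the heat baths, then emits a sufficiently energetic particle from a bath and lets it jump-and-mix at each site to drive the stored energies $s_i$ and particle energies $x_j^i$ into the prescribed bounded windows, and finally injects the desired number of particles from the bath. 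Each of these steps occurs with strictly positive probability from arbitrary $\omega_0$ (no lower bound on the initial energies is needed for events that consist of no clock ringing or of energetic particles entering from the bath), so $P^{h/2}(\omega_0, K) > 0$.

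The third step is Chapman--Kolmogorov: for any measurable $B \subset K$,
\[
P^{h}(\omega_0, B) \;\geq\; \int_K P^{h/2}(\omega_0, d\omega')\, P^{h/2}(\omega', B),
\]
and since $P^{h/2}(\omega', \cdot)$ has strictly positive density on $K$ uniformly for $\omega' \in K$, the right-hand side is strictly positive whenever $\Lambda(B) > 0$. Hence $P^{h}(\omega_0, \cdot)$ itself admits a strictly positive density on $K$ for \emph{every} $\omega_0 \in \Omega$. This means every state in $\Omega$ leads with positive one-step (for $\omega_n^h$) probability into every $\Lambda$-positive subset of $K$, so the whole state space belongs to a single maximal $\psi$-irreducibility class whose small set $K$ is hit from every starting point. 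Standard Markov chain theory (e.g.\ Theorem 10.0.1 and Proposition 10.1.1 of \cite{meyn2009markov}) then implies that $\omega_n^h$ has at most one invariant probability measure.

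The main obstacle, as in the energy exchange model, is really the second step: verifying that $K$ is reachable in the prescribed short time $h/2$ from an \emph{arbitrary} initial state, since the state space of the random halves model contains configurations with unbounded numbers of particles and unbounded energies. Everything else is either a direct reference to Theorem \ref{analytic} or a routine Chapman--Kolmogorov argument, and the uniqueness conclusion follows immediately once positivity of the density on $K$ from every initial state is established.
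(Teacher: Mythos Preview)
Your proposal is correct and follows exactly the paper's approach: the paper's proof consists of the single sentence ``This proof is the same as that of Proposition 5.7,'' and you have faithfully unpacked what that entails---applying Theorem~\ref{analytic} with step $h/2$ to get positive density on $K$ from within $K$, re-running the same construction from an arbitrary $\omega_0\in\Omega$ to get $P^{h/2}(\omega_0,K)>0$, and combining via Chapman--Kolmogorov. Your flagging of the second step as the only place requiring care (unbounded particle numbers and energies) is apt, and your resolution---that the construction still yields a strictly positive, though not uniform, probability---is precisely what the paper implicitly relies on.
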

\begin{proof}
This proof is the same as that of Proposition 5.7.
\end{proof}

\bigskip 

In summary, we have the following conclusions for $\omega_{t}$.

\begin{enumerate}
  \item For any $T_{L}$, T$_{R}$, there exists a unique invariant probability measure $\pi$,
    i.e., the nonequilibrium steady-state, which is absolutely
    continuous with respect to the reference measure on
    $\Omega$. 
\item For almost every $\omega_{0} \in \Omega$ and any
  sufficiently small $\varepsilon > 0$, we have
$$
  \lim_{t\rightarrow \infty}t^{2 - \varepsilon} \|
  \delta_{\omega_0} P^{t} - \hat{\pi}P^{t} \|_{TV} = 0 \,.
$$
\item For any functions $\eta$, $\xi \in L^{\infty}(
  \Omega)$, we have
$$
  C_{\pi}^{\eta, \xi}(t) \leq O(1) \cdot t^{\varepsilon - 2}
$$
for any $\varepsilon > 0$. 
\end{enumerate}

\end{document}